\colorlet{linkequation}{cyan}
\def\multiset#1#2{\ensuremath{\left(\kern-.3em\left(\genfrac{}{}{0pt}{}{#1}{#2}\right)\kern-.3em\right)}}
\def\oversortoftilde#1{\mathop{\vbox{\m@th\ialign{##\crcr\noalign{\kern3\p@}%
      \sortoftildefill\crcr\noalign{\kern3\p@\nointerlineskip}%
      $\hfil\displaystyle{#1}\hfil$\crcr}}}\limits}
\def\sortoftildefill{$\m@th \setbox\z@\hbox{$\braceld$}%
  \braceld\leaders\vrule \@height\ht\z@ \@depth\z@\hfill\braceru$}
  \newcommand{\type}{\operatorname{type}}
  \newcommand{\reg}{\operatorname{reg}}
 \newcommand{\ex}{\operatorname{end}}
 \newcommand{\grade}{\operatorname{grade}}
 \newcommand{\depth}{\operatorname{depth}}
 \newcommand{\Ext}{\operatorname{Ext}}
\newcommand{\proset}{\,\mathrel{\lower 4pt\hbox{$\scriptscriptstyle/$}
\mkern -14mu\subseteq }\,} 
 \newtheorem{theorem}{Theorem}[section]
 \newtheorem{corollary}[theorem]{Corollary}
 \newtheorem{lemma}[theorem]{Lemma}
 \newtheorem{proposition}[theorem]{Proposition}
 \newtheorem{question}[theorem]{Question}
\newtheorem{notation}[theorem]{Notation}
 \theoremstyle{definition}
 \newtheorem{remark}[theorem]{Remark}
 \newtheorem{definition}[theorem]{Definition}
 \newtheorem{example}[theorem]{Example}
\title[Bounds on the  Ratliff-Rush Index and the Castelnuovo-Mumford Regularity] {Bounds on the  Ratliff-Rush Index and the Castelnuovo-Mumford Regularity}
\author{Mousumi Mandal and Shruti Priya }
\date{}
\thanks{AMS Classification 2010: 13H10, 13D40, 13A30.}
\thanks{Key words and phrases: Cohen-Macaulay local rings, reduction number, superficial elements, stability index, Ratliff-Rush filtration, Castelnuovo-Mumford regularity.}
\address{Department of Mathematics, Indian Institute of Technology Kharagpur, 721302, India} \email{mousumi@maths.iitkgp.ac.in}
\address{Department of Mathematics, Indian Institute of Technology Kharagpur, 721302, India} \email{shruti96312@kgpian.iitkgp.ac.in}
\begin{document}  
\allowdisplaybreaks

\begin{abstract}

Let $(R, \mathfrak m)$ be a Cohen-Macaulay local ring of dimension $d \geq 2,$ and $I$ an $\mathfrak m$-primary ideal of $R.$  Denote $r_{J}(I)$ as the reduction number of $I$ with respect to a minimal reduction $J$ of $I,$ and $\rho(I)$ as the Ratliff-Rush index of $I$. We establish upper bounds on $\rho(I)$ in terms of Hilbert coefficients $e_{i}(I)$ for $0 \leq i \leq d+1,$ and $r_{J}(I).$ Suppose $\widetilde{I^{r_{J}(I)}} \neq I^{r_{J}(I)}.$ We prove that $\rho(I) \leq r_{J}(I)-1+(-1)^{d+1}(e_{d+1}(I)-\widetilde{e}_{d+1}(I)).$  When $d=2,$ we prove that $\rho(I) \leq r_{J}(I) -1 +(e_{2}(I)-1)e_{2}(I)-e_{3}(I).$ This established bound on $\rho(I)$ consequently leads to a bound on the Castelnuovo-Mumford regularity of the associated graded ring of $I.$ We also determine bound on $\rho(I)$ in  two-dimensional Buchsbaum rings with positive depth.  
\end{abstract} \maketitle
\vspace{-1em}
\begin{center}
    \scriptsize{\emph{Dedicated to Professor Jugal Verma on the occasion of his 65th birthday}}
\end{center}

\section{Introduction}
The concept of Castelnuovo-Mumford regularity or simply regularity  was first presented in the early 1980s by Eisenbud and Goto \cite{EG}, and Ooishi \cite{Ooishi} as an algebraic equivalent to the concept of regularity for coherent sheaves on projective spaces, which was formerly explored by Mumford \cite{dm}. It is a kind of universal bound for important invariants of graded algebras  such as the maximum degree of the syzygies and the maximum non-vanishing degree of the local cohomology modules.

The concept of regularity  can also be defined for graded structures over a commutative ring
 like the Rees algebra $\mathcal{R}(I)=\displaystyle \bigoplus_{n \geq 0}I^{n}t^{n} \subseteq R[t]$, where $t$ is an indeterminate over $R$  and the associated graded ring $G(I)=\displaystyle \bigoplus_{n \geq 0} I^{n}/I^{n+1}$ of $R$ with respect to $I$.  
The relation between  $\reg \mathcal{R}(I)$ and $\reg G(I)$ was well studied by numerous authors in the past (see  \cite{ooishi2, trung}). 

Efforts are frequently  made to determine  its upper bounds  using simpler invariants which are computationally easier to determine. The most fundamental invariants that reflect the complexity of a graded algebra are its dimension and multiplicity.  Nevertheless, it is not easy to establish a simpler upper bound only based on the multiplicity and dimension. Previous results of Srinivas and Trivedi \cite{st1,st2}, and Trivedi \cite{trivedi, trivedi2} gave bounds on  regularity in terms of dimension and multiplicity for Cohen-Macaulay rings and generalized Cohen-Macaulay modules. A celebrated result by Rossi, Trung and Valla  \cite[Theorem 3.3]{rtv}, gave a bound on the regularity of the associated graded ring  with respect to the the maximal ideal in a local ring involving extended degree, dimension and multiplicity. 
  For  Cohen-Macaulay local rings  \cite[Corollary 3.4]{rtv},  authors  proved that 
 \begin{equation*}
\reg G(\mathfrak m) \leq
    \begin{cases}
        e(\mathfrak m)-1, & \text{if } d=1\\
          e(\mathfrak m)^{2((d-1)!)-1}[e(\mathfrak m)-1]^{(d-1)!}, &  \text{if } d\geq 2 . 
    \end{cases}
\end{equation*}
 Linh \cite[Theorem 4.4]{chl} further extended these bounds  to the associated graded module of an arbitrary finitely generated graded $R$-module with respect to an arbitrary $\mathfrak m$-primary ideal. There are various bounds on $\reg G(I)$ in terms of other invariants of $I$ (see \cite{dgv, dhoa, str, vas}).

   Recall that in a Noetherian ring $R,$  the ascending chain of ideals $I\subseteq(I^{2}:I)\subseteq(I^{3}:I^{2})\subseteq\ldots\subseteq(I^{n+1}:I^{n})\subseteq\ldots$
 eventually stabilizes to an ideal $\widetilde{I}=\displaystyle \bigcup_{n \geq 1}(I^{n+1}:I^{n})$. This ideal $\widetilde{I}$ is known as the \textit{Ratliff-Rush closure} of $I$ or the \textit{Ratliff-Rush ideal} associated with $I.$ An ideal $I$ is termed  \textit{Ratliff-Rush closed} if $\widetilde{I}=I.$  The \textit{Ratliff-Rush filtration} with respect to $I$, is the filtration $\mathcal{F}=\{\widetilde{I^{n}}\}_{n \geq 0},$ and if $\grade(I)>0$, then $\widetilde{I^{n}}=I^{n}$ for sufficiently large $n.$ For an $\mathfrak m$-primary ideal $I,$ we define
 \begin{equation*}
     \rho(I)=\min\{n \in \mathbb{N}: \widetilde{I^{k}}=I^{k} \text{ for all } k \geq n\}.
 \end{equation*} 

 This numerical invariant is known as  the \textit{Stability index} of the Ratliff-Rush filtration or the \textit{Ratliff-Rush Index.}
  The principal goal of this paper is to introduce   upper bounds on the Castelnuovo-Mumford regularity using higher Hilbert coefficients (see Definition \ref{def1}).  The starting point of our approach is the description of the regularity by Rossi, Trung and Trung  \cite[Theorem 2.4]{rtt}. The authors proved that for a two-dimensional 
  Buchsbaum ring $R$ with $\depth R >0$, $I$ an $\mathfrak m$-primary ideal,  and $J$ a minimal reduction of $I$, 
      $\reg G(I)=\reg \mathcal{R}(I)=\max \{r_{J}(I),\rho(I)\}.$

  To establish an upper bound on the Castelnuovo-Mumford regularity in dimension two, it is crucial to determine effective bounds on both $r_{J}(I)$ and $\rho(I)$. A remarkable result of Rossi \cite[Corollary 1.5]{rossi2}, gave bound on $r_{J}(I)$ in Cohen-Macaulay local rings of dimension at most two. 
Consequently, if $\reg G(I) =\rho(I)$, then it is essential to give a  bound on $\rho(I).$ The computational task of determining $\rho(I)$ poses significant challenges even in dimension two, just because $\widetilde{I^{n}}=I^{n}$ does not imply $\widetilde{I^{n+1}}=I^{n+1}.$   Thus, our aim is to establish bounds on $\rho(I)$ for Cohen-Macaulay local rings of dimension $d$.

Huneke \cite[Proposition 4.3]{hjls} established a significant relationship between $\rho(I)$ and $r_{J}(I)$ in dimension  two. He proved that  for a   Cohen-Macaulay local ring $(R,\mathfrak m)$ of dimension $d = 2,$ $I$  an $\mathfrak m$-primary ideal and $J \subseteq I$  a minimal reduction, if $\widetilde{I^{m}}=I^{m}$ for some $m \geq r_{J}(I)$, then $\widetilde{I^{n}}=I^{n}$ for all  $n \geq m.$ Building on this foundation, we first extend this result to 
$d$-dimensional Cohen-Macaulay local rings (see Proposition \ref{maa}). As a direct consequence of this generalization, we establish the following theorem:
  \begin{theorem} \label{nehal}
      Let $R$ be a Cohen-Macaulay local ring of dimension  $d \geq 2$, $I$ an $\mathfrak m$-primary ideal and $J \subseteq I$ be a minimal reduction of $I$. 
      If $\widetilde{I^{r_{J}(I)}} \neq I^{r_{J}(I)},$ then 
      \begin{equation*}
          \rho(I) \leq r_{J}(I)-1+(-1)^{d+1}(e_{d+1}(I)-\widetilde{e}_{d+1}(I)).
      \end{equation*}
   \end{theorem}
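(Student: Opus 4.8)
The plan is to convert the asserted inequality into a statement about the finite sum $\sum_{n\ge 1}\lambda(\widetilde{I^n}/I^n)$ and then to count, via Proposition~\ref{maa}, how many of these lengths are forced to be positive. First I would compare the two filtrations. Since $I$ is $\mathfrak m$-primary and $R$ is Cohen--Macaulay of dimension $d\ge 2$, one has $\grade I=d>0$, so $\widetilde{I^n}=I^n$ for $n\gg 0$ (hence $\rho(I)$ is finite), and the Hilbert--Samuel polynomials of the $I$-adic filtration and of $\mathcal F=\{\widetilde{I^n}\}_{n\ge 0}$ coincide because the two filtrations agree for $n\gg 0$; in particular $e_i(I)=\widetilde e_i(I)$ for $0\le i\le d$. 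Writing $h(z)$ and $\widetilde h(z)$ for the numerators of the corresponding Hilbert series (common denominator $(1-z)^{d+1}$), the exact sequences $0\to\widetilde{I^{n+1}}/I^{n+1}\to R/I^{n+1}\to R/\widetilde{I^{n+1}}\to 0$ yield
\[
h(z)-\widetilde h(z)=(1-z)^{d+1}f(z),\qquad f(z):=\sum_{n\ge 1}\lambda\!\left(\widetilde{I^n}/I^n\right)z^{n-1},
\]
with $f$ a polynomial. As $e_{d+1}$ of a filtration is $1/(d+1)!$ times the $(d+1)$st derivative at $z=1$ of its Hilbert-series numerator (Definition~\ref{def1}), differentiating $(1-z)^{d+1}f(z)$ exactly $d+1$ times and evaluating at $z=1$, where only the Leibniz term carrying all $d+1$ derivatives on $(1-z)^{d+1}$ survives, gives
\[
(-1)^{d+1}\bigl(e_{d+1}(I)-\widetilde e_{d+1}(I)\bigr)=f(1)=\sum_{n\ge 1}\lambda\!\left(\widetilde{I^n}/I^n\right).
\]
Thus it is enough to prove $\rho(I)\le r_J(I)-1+\sum_{n\ge 1}\lambda(\widetilde{I^n}/I^n)$.

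Next I would use Proposition~\ref{maa}. Since equality $\widetilde{I^m}=I^m$ at any $m\ge r_J(I)$ persists for all larger powers, $\rho(I)=\min\{m\ge r_J(I):\widetilde{I^m}=I^m\}$, and the set $\{m\ge r_J(I):\widetilde{I^m}\ne I^m\}$ is exactly the block $\{r_J(I),\dots,\rho(I)-1\}$, which is nonempty because $\widetilde{I^{r_J(I)}}\ne I^{r_J(I)}$ forces $\rho(I)\ge r_J(I)+1$. Each of these $\rho(I)-r_J(I)$ lengths is at least $1$, so $\sum_{n\ge 1}\lambda(\widetilde{I^n}/I^n)\ge\rho(I)-r_J(I)$, which already gives $\rho(I)\le r_J(I)+(-1)^{d+1}(e_{d+1}(I)-\widetilde e_{d+1}(I))$.

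Gaining the final unit is the step I expect to be the real work, and it is where the reduction-number hypothesis is used most tightly. The idea is that the proof of Proposition~\ref{maa} should in fact deliver the sharper implication ``$\widetilde{I^m}=I^m$ and $m+1\ge r_J(I)$ imply $\widetilde{I^{m+1}}=I^{m+1}$'', because the inductive step from $m$ to $m+1$ needs only $I^{m+2}=JI^{m+1}$, not $I^{m+1}=JI^{m}$; equivalently, multiplication by a superficial element of $I$ induces a surjection $\widetilde{I^m}/I^m\twoheadrightarrow\widetilde{I^{m+1}}/I^{m+1}$ for all $m\ge r_J(I)-1$. Its contrapositive at $m=r_J(I)-1$ gives $\widetilde{I^{r_J(I)-1}}\ne I^{r_J(I)-1}$, and this index is legitimate because $r_J(I)\ge 2$: if $r_J(I)=1$ then $I^2=JI$, so $G(I)$ is Cohen--Macaulay, $\depth G(I)=d>0$, and $I$ would be Ratliff--Rush closed at every power, contradicting the hypothesis. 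Adjoining $r_J(I)-1$ to the block $\{r_J(I),\dots,\rho(I)-1\}$ exhibits $\rho(I)-r_J(I)+1$ values of $n$ with $\lambda(\widetilde{I^n}/I^n)\ge 1$, so $\sum_{n\ge 1}\lambda(\widetilde{I^n}/I^n)\ge\rho(I)-r_J(I)+1$, which rearranges to the claimed bound. The main obstacle is precisely to confirm that the argument for Proposition~\ref{maa} survives lowering its hypothesis power by one --- i.e.\ the surjectivity of the superficial-element map already at $m=r_J(I)-1$; the remaining Hilbert-series manipulations are routine.
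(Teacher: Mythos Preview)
Your overall strategy matches the paper's: both establish the identity
\[
(-1)^{d+1}\bigl(e_{d+1}(I)-\widetilde e_{d+1}(I)\bigr)=\sum_{n\ge 1}\lambda\bigl(\widetilde{I^n}/I^n\bigr)
\]
(you via Hilbert series and the Leibniz rule, the paper via the second Hilbert--Samuel functions $H^2$ and $P^2$ --- equivalent computations) and then use Proposition~\ref{maa} to count how many summands must be positive. Up to and including the clean bound $\rho(I)\le r_J(I)+\sum_{n\ge 1}\lambda(\widetilde{I^n}/I^n)$, your argument and the paper's are the same.

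The gap is precisely the step you flag as ``the real work,'' and your proposed route to the extra unit is \emph{not} what the paper does. Lowering Proposition~\ref{maa} to $m=r_J(I)-1$ does not follow from its proof: after obtaining $(I^{m+1}:x_1)=I^m$, the propagation to all $n\ge m$ is carried out by invoking Proposition~\ref{trung001}, whose standing hypothesis is $r\ge r_J(I)$, so one cannot take $r=r_J(I)-1$; hence the claim ``the inductive step needs only $I^{m+2}=JI^{m+1}$'' is not supported, and the asserted surjection $\widetilde{I^m}/I^m\twoheadrightarrow\widetilde{I^{m+1}}/I^{m+1}$ at $m=r_J(I)-1$ remains unproved. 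The paper, for its part, does not argue for the ``$-1$'' at all: it simply opens with
\[
\rho(I)\le r_J(I)-1+\sum_{n=r_J(I)}^{\rho(I)-1}\lambda\bigl(\widetilde{I^n}/I^n\bigr)
\]
as though by direct count. But that count exhibits only $\rho(I)-r_J(I)$ terms each of length $\ge 1$, which yields only $\rho(I)\le r_J(I)+\sum$ --- exactly what you obtained --- and not the sharper displayed inequality. So the ``$-1$'' is unaccounted for in the paper's written reasoning as well; you have correctly located where the difficulty lies, and your honest weaker bound $\rho(I)\le r_J(I)+(-1)^{d+1}(e_{d+1}(I)-\widetilde e_{d+1}(I))$ is all that either argument actually establishes.
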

We observe that under the  assumption that  the Ratliff-Rush filtration with respect to $I$ behaves well mod a superficial sequence $x_{1}, x_{2}, \ldots, x_{d-2}  \in I$ (see Definition \ref{rrr}) in Theorem \ref{nehal},  if the dimension of the ring is odd then $\rho(I) \leq r_{J}(I)-1+e_{d+1}(I),$ and if the dimension of the ring is even then  $\rho(I) \leq r_{J}(I)-1-e_{d+1}(I)+\left(\frac{r_{J}(I)-1}{d}\right)(e_{d}(I)-e_{d-1}(I)+\cdots+e(I)-\lambda(R/I)+\lambda(\widetilde{I}/I)).$ 
For two-dimensional Cohen-Macaulay local rings, we prove the following bound on $\rho(I):$

\begin{proposition} \label{prop2}
    Let $(R,\mathfrak m)$ be a  two dimensional Cohen-Macaulay local ring  and $I$ an $\mathfrak m$-primary ideal. For a minimal reduction $J$ of $I$, if $r_{J}(I) < \rho(I),$ then  
    \begin{equation} \label{xxx}
        \rho(I) \leq r_{J}(I)-1+(e_{2}(I)-1)e_{2}(I)-e_{3}(I).
    \end{equation}
\end{proposition}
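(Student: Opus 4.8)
The plan is to reduce the two-dimensional statement to Theorem \ref{nehal} in the case $d=2$, which already gives
$\rho(I) \le r_{J}(I)-1+(e_{3}(I)-\widetilde{e}_{3}(I))$ under the hypothesis $\widetilde{I^{r_{J}(I)}} \neq I^{r_{J}(I)}$ (note $(-1)^{d+1}=-1$ when $d=2$, so the bound reads $r_{J}(I)-1-(e_{3}(I)-\widetilde{e}_{3}(I))=r_{J}(I)-1+\widetilde{e}_{3}(I)-e_{3}(I)$). First I would check that the hypothesis $r_{J}(I)<\rho(I)$ forces $\widetilde{I^{r_{J}(I)}}\neq I^{r_{J}(I)}$: indeed, by definition of $\rho(I)$ there exists $k\ge r_{J}(I)$ with $\widetilde{I^{k}}\neq I^{k}$, and by the generalized Huneke-type statement (Proposition \ref{maa}) the equality $\widetilde{I^{r_{J}(I)}}=I^{r_{J}(I)}$ would propagate to all $n\ge r_{J}(I)$, forcing $\rho(I)\le r_{J}(I)$, a contradiction. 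So Theorem \ref{nehal} applies and it remains only to bound $\widetilde{e}_{3}(I)-e_{3}(I)$ from above by $(e_{2}(I)-1)e_{2}(I)$, i.e. to show
\begin{equation*}
\widetilde{e}_{3}(I) \le e_{3}(I) + (e_{2}(I)-1)e_{2}(I).
\end{equation*}

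The key step is therefore to control the third Hilbert coefficient of the Ratliff-Rush filtration $\F=\{\widetilde{I^{n}}\}$ in dimension two. The natural tool is the superficial element technique: pick a superficial element $x\in I$ for $\F$, pass to $R/(x)$ (a one-dimensional Cohen-Macaulay local ring), and relate the Hilbert coefficients of $\F$ in $R$ to those of the filtration $\overline{\F}=\{\widetilde{I^{n}}+(x)/(x)\}$ in $R/(x)$ via the standard one-step-down formulas. In dimension two, $\widetilde{e}_{3}(I)$ is expressible (up to sign) through the second Hilbert coefficient of $\overline{\F}$ together with correction terms measuring the failure of $x$ to be a nonzerodivisor modulo the filtration pieces — these are exactly the "$\sum \lambda$" obstruction terms appearing in Blancafort / Rossi-Valla type formulas. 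The point is that for the Ratliff-Rush filtration these corrections are governed by $\lambda(\widetilde{I^{n}}/I^{n})$, which vanishes for $n\ge \rho(I)$ and is otherwise bounded in terms of lengths that feed back into $e_{2}$. One then bounds the relevant second coefficient of $\overline{\F}$ in the one-dimensional ring: here $\widetilde{e}_{2}$ of a filtration in a one-dimensional CM ring is at most $\binom{e_{1}}{2}$-type expressions by Northcott-type inequalities, and $e_{1}$ of $\overline{\F}$ is controlled by $e_{2}(I)$.

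Concretely, I expect the estimate to run as follows. Using $r:=r_{J}(I)$, the set of indices $n$ with $\widetilde{I^{n}}\supsetneq I^{n}$ is contained in $\{1,\dots,\rho(I)-1\}$ but the "effective" contribution to the coefficient difference comes from $n$ in the range controlled by $r$; one shows $\sum_{n}\lambda(\widetilde{I^{n}}/I^{n})$ is at most something like $\binom{e_{2}(I)}{2}$ or $e_{2}(I)^2$ by combining (i) $\lambda(\widetilde{I}/I)\le$ a bound in terms of $e_2$ (since $\widetilde{I}/I$ sits inside a module whose length is measured by the second Hilbert coefficient) and (ii) the fact that once we are in dimension one the filtration $\overline{\F}$ has $\widetilde{e}_{1}(\overline{\F})$ bounded by $e_{2}(I)$-ish quantities, and its length defect $\sum\lambda(\widetilde{J^{n}}/\overline{I}^{n})$ is bounded by $\binom{\widetilde{e}_{1}}{2}$. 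Assembling these with the one-step-down relation $\widetilde e_3(I)= -\widetilde e_2(\overline \F)+(\text{correction})$ yields $\widetilde{e}_{3}(I)-e_{3}(I)\le (e_{2}(I)-1)e_{2}(I)$, and plugging into Theorem \ref{nehal} finishes the proof.

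The main obstacle will be the second half of the previous paragraph: getting the clean quadratic bound $(e_{2}(I)-1)e_{2}(I)$ rather than a weaker polynomial expression. This requires being careful about which Hilbert coefficients of the auxiliary one-dimensional filtration $\overline{\F}$ one can legitimately bound by $e_{2}(I)$, and about the signs and the exact form of the one-step-down formula for the \emph{Ratliff-Rush} filtration (which is not the $I$-adic filtration, so one must verify that passing mod $x$ still yields the Ratliff-Rush filtration of the image, or at least a filtration squeezed between it and the $I$-adic one — this is precisely where the behaviour of the Ratliff-Rush filtration modulo a superficial element, Definition \ref{rrr}, enters). I would handle this by first establishing the relevant length inequality as a standalone lemma in the one-dimensional case and then invoking it after the single reduction step, keeping the bookkeeping of correction terms explicit.
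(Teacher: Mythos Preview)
Your reduction to Theorem \ref{nehal} is correct and matches the paper exactly, including the observation (via Proposition \ref{maa}) that $r_{J}(I)<\rho(I)$ forces $\widetilde{I^{r_{J}(I)}}\neq I^{r_{J}(I)}$. However, there is an algebraic slip in what remains: from $\rho(I)\le r_{J}(I)-1+\widetilde{e}_{3}(I)-e_{3}(I)$ you need $\widetilde{e}_{3}(I)\le (e_{2}(I)-1)e_{2}(I)$, \emph{not} $\widetilde{e}_{3}(I)-e_{3}(I)\le (e_{2}(I)-1)e_{2}(I)$; your version would yield $\rho(I)\le r_{J}(I)-1+(e_{2}(I)-1)e_{2}(I)$, which is a different (and in general incomparable) bound.

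More importantly, your route to the key inequality $\widetilde{e}_{3}(I)\le (e_{2}(I)-1)e_{2}(I)$ diverges from the paper and, as you yourself concede, does not reach the target. The paper does not go through superficial-element reduction to dimension one at all. Instead (Lemma \ref{mainak}) it works directly in dimension two with local cohomology of the Rees algebra of the Ratliff-Rush filtration $\mathcal{F}$: Blancafort's difference formula gives $P_{\mathcal{F}}(n)-H_{\mathcal{F}}(n)=\lambda(\mathcal{H}^{2}_{\mathcal{R}_{+}}(\mathcal{R}(\mathcal{F}))_{n+1})$, summing yields $\widetilde{e}_{3}(I)=\sum_{n\ge 0}\lambda(\mathcal{H}^{2}_{\mathcal{R}_{+}}(\mathcal{R}(\mathcal{F}))_{n+1})$, the lengths are nonincreasing in $n$ with value $e_{2}(I)$ at $n=0$, and the number of nonzero terms is at most $a_{2}(\mathcal{R}(\mathcal{F}))\le a_{2}(\widetilde{G}(I))=\widetilde{r}(I)-2\le e_{2}(I)-1$ (the last step by \cite{ms}). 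Multiplying gives exactly $(e_{2}(I)-1)e_{2}(I)$.

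Your proposed superficial-element descent faces exactly the obstacle you name: without the hypothesis of Definition \ref{rrr} (which the proposition does \emph{not} assume), the image of $\mathcal{F}$ in $R/(x)$ is not the Ratliff-Rush filtration of $I/(x)$, and the Northcott-type bounds you invoke for one-dimensional filtrations do not straightforwardly produce the clean quadratic $(e_{2}(I)-1)e_{2}(I)$. The local-cohomology argument sidesteps this entirely and is what makes the proof short.
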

Though the bound in (\ref{xxx}) is quadratic in $e_{2}(I),$ we illustrate various examples where it is tighter than the earlier known bounds.

One of the generalizations of a Cohen-Macaulay ring is a Buchsbaum ring. In such rings, the condition of a system of parameters being a regular sequence is relaxed to being a weak sequence. Recall that a Noetherian local ring $(R, \mathfrak m)$ is called a Buchsbaum ring if every system of parameter $x_{1},\ldots,x_{r}$ of $R$ is a weak sequence, i.e., \begin{center}
     $(x_{1},\ldots,x_{i-1}):x_{i}=(x_{1},\ldots,x_{i-1}):\mathfrak m \text{ for } i=1,\ldots, r.$
 \end{center}
Therefore, it is worth asking if we can find a similar bound on $\rho(I)$ for Buchsbaum rings of positive depth. Thus, we prove the following result:
\begin{theorem} \label{rudra1}
    Let $(R, \mathfrak m)$ be a two dimensional Buchsbaum ring with  $\depth R >0$  and $\mathbb S$ be the $S_{2}$-fication of $R$. Let $I$ be an $\mathfrak m$-primary ideal with minimal reduction $J$ and assume that $I=I\mathbb S$. If $r_{J}(I) < \rho(I),$then $\rho(I) \leq r_{J}(I)-1+(e_{2}(I)-e_{1}(J)-1)(e_{2}(I)-e_{1}(J))-e_{3}(I)$.
\end{theorem}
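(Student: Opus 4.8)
The plan is to mimic the strategy used for the Cohen--Macaulay case (Theorem \ref{nehal} and Proposition \ref{prop2}), but to work over the $S_2$-fication $\mathbb{S}$ of $R$, where the Buchsbaum hypothesis is precisely what guarantees good behaviour. First I would recall that for a two-dimensional Buchsbaum ring with $\depth R>0$, the module $\mathbb{S}$ is a two-dimensional Cohen--Macaulay ring, finitely generated as an $R$-module, and that the hypothesis $I=I\mathbb{S}$ makes $I$ an ideal of $\mathbb{S}$ with $J\mathbb{S}$ a minimal reduction of it. The key point is that $\rho(I)$, computed over $R$, coincides with the Ratliff--Rush index of $I\mathbb{S}$ computed over $\mathbb{S}$: since $\widetilde{I^n}$ only depends on the filtration $\{I^n\}$ and $(I^{n+1}\mathbb{S}:_{\mathbb{S}}I^n\mathbb{S})\cap R$ relates to $(I^{n+1}:_R I^n)$, one shows the two stability indices agree (this is where the Buchsbaum structure, via the weak-sequence property of parameters, is used to control the discrepancy between $R$ and $\mathbb{S}$). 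Similarly $r_J(I)=r_{J\mathbb{S}}(I\mathbb{S})$.

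Next I would bring in the Hilbert coefficients. Over the Cohen--Macaulay ring $\mathbb{S}$, Proposition \ref{prop2} applies verbatim to the ideal $I\mathbb{S}$, giving
\begin{equation*}
\rho(I) \leq r_J(I) - 1 + \bigl(e_2(I\mathbb{S})-1\bigr)e_2(I\mathbb{S}) - e_3(I\mathbb{S}).
\end{equation*}
The remaining task is to translate the coefficients $e_i(I\mathbb{S})$, computed over $\mathbb{S}$, back into the coefficients $e_i(I)$ computed over $R$. This is the standard comparison between the Hilbert--Samuel function of $I$ on $R$ and on $\mathbb{S}$: the difference $\lambda_{\mathbb{S}}(\mathbb{S}/I^n\mathbb{S}) - \lambda_R(R/I^n)$ is governed by the finite-length module $\mathbb{S}/R$ and the local cohomology $H^0_{\mathfrak m}(R)$, which in the Buchsbaum case has length equal to a Buchsbaum invariant. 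I would use the exact sequence $0\to R\to \mathbb{S}\to \mathbb{S}/R\to 0$ tensored appropriately, together with the fact that for $n\gg 0$ the relevant maps stabilise, to extract the identities $e_3(I\mathbb{S})=e_3(I)$ and $e_2(I\mathbb{S}) = e_2(I) - e_1(J)$. The shift by $e_1(J)$ here is exactly the correction term that appears in the statement, and it comes from comparing the first Hilbert coefficient of $I$ over $R$ with that over $\mathbb{S}$ (the degree-one difference in $\lambda(\mathbb{S}/I^n\mathbb{S})-\lambda(R/I^n)$ picks up $e_1$ of the reduction $J$).

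Substituting these two identities into the inequality from Proposition \ref{prop2} yields exactly
\begin{equation*}
\rho(I) \leq r_J(I) - 1 + \bigl(e_2(I)-e_1(J)-1\bigr)\bigl(e_2(I)-e_1(J)\bigr) - e_3(I),
\end{equation*}
which is the desired bound. The main obstacle I anticipate is the bookkeeping in the two coefficient identities: one must verify carefully that passing to $\mathbb{S}$ does not disturb $e_3$ while shifting $e_2$ by precisely $e_1(J)$, and that the Buchsbaum condition (not merely the $S_2$-fication being Cohen--Macaulay) is what licenses the equality $\rho(I)=\rho_{\mathbb{S}}(I\mathbb{S})$ and $r_J(I)=r_{J\mathbb{S}}(I\mathbb{S})$ rather than just an inequality. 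Everything else is a transfer of Proposition \ref{prop2} through the finite birational extension $R\hookrightarrow\mathbb{S}$.
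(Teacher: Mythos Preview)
Your high-level strategy---pass to the $S_2$-fication, invoke the Cohen--Macaulay bound of Proposition~\ref{prop2}, and translate the Hilbert coefficients back---is exactly the paper's plan. But there is one structural gap that the paper handles and you do not: the ring $\mathbb{S}$ is only \emph{semi-local}, not local. Proposition~\ref{prop2} is stated for a Cohen--Macaulay \emph{local} ring, so you cannot apply it ``verbatim'' to $I\mathbb{S}\subseteq\mathbb{S}$. The paper deals with this by localizing $\mathbb{S}$ at each of its maximal ideals $\mathfrak{M}_1,\dots,\mathfrak{M}_l$ to obtain local Cohen--Macaulay rings $\mathbb{S}_i$, observing that $\rho(I\mathbb{S})=\max_i\rho(I\mathbb{S}_i)$ and $r_{J\mathbb{S}}(I\mathbb{S})=\max_i r_{J\mathbb{S}_i}(I\mathbb{S}_i)$, applying Proposition~\ref{prop2} in the relevant $\mathbb{S}_i$, and then aggregating via the length formula $\lambda_R(W)=\sum_i f_i\,\lambda_{\mathbb{S}_i}(W\mathbb{S}_i)$ (Lemma~\ref{bintere}) together with the nonnegativity of each $e_2^{\mathbb{S}_i}(I\mathbb{S}_i)$ to bound the cross terms in the quadratic expression. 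This aggregation step is where most of the work lies, and it is absent from your sketch.

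Two smaller points. First, you claim equalities $\rho(I)=\rho_{\mathbb{S}}(I\mathbb{S})$ and $r_J(I)=r_{J\mathbb{S}}(I\mathbb{S})$, but only the inequalities $\rho(I)\le\rho(I\mathbb{S})$ and $r_{J\mathbb{S}}(I\mathbb{S})\le r_J(I)$ are needed (and these are what the paper establishes and uses); proving the equalities would be extra work you do not need. Second, the length discrepancy between $R$ and $\mathbb{S}$ is controlled by $\mathbb{S}/R\cong\mathcal{H}^1_{\mathfrak m}(R)$ (not $\mathcal{H}^0_{\mathfrak m}(R)$, which vanishes since $\depth R>0$), and the Buchsbaum hypothesis enters precisely to identify $\lambda(\mathcal{H}^1_{\mathfrak m}(R))=-e_1(J)$; your identities $e_2^R(I\mathbb{S})=e_2(I)-e_1(J)$ and $e_3^R(I\mathbb{S})=e_3(I)$ are correct and match Lemma~\ref{coefflemma}, but note that these are the coefficients computed with $R$-lengths, and relating them to the $\mathbb{S}_i$-coefficients is again Lemma~\ref{bintere}.
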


As an application, the bounds we establish on $\rho(I)$  enables us to derive corresponding bounds on $\reg G(I)$ in two-dimensional Cohen Macaulay local rings, when $r_{J}(I) < \rho(I)$ (see Proposition \ref{lama} and Proposition \ref{sp}).

This paper is organized into five sections. Definitions, introductory ideas and notations are all covered in Section 2.    In Section 3, we  establish bounds on $\rho(I)$ and  prove Theorem 1.1 and Proposition 1.2.   Further in this section, we gather various bounds on $\rho(I)$ only in terms of $r_{J}(I).$  In Section 4, we prove Theorem 1.3 and establish bounds on $\rho(I)$ in two-dimensional Buchsbaum rings with positive depth.
Section 5 is an application of the results of Sections 3 and 4, where we establish bounds  on the regularity. We provide examples to support our claims.

\section{Preliminaries}
In this section, we provide an overview of the definitions and relevant facts that will be used throughout the paper, as well as establish some conventions and notations. Throughout, $(R, \mathfrak m)$ will be a Cohen-Macaulay local ring (\textit{unless stated otherwise}) with an infinite residue field and $I$ an $\mathfrak m$-primary ideal of $R$. The Krull dimension of the ring $R$  will be denoted by $d$ and we will  always assume that $d \geq 2$. For undefined terms, we redirect readers to \cite{bh}.

\begin{definition} \label{def1}
    A sequence of ideals $\mathcal{I}=\{I_{n}\}_{n\in \mathbb Z}$ is called an $I$-\textit{admissible filtration}, if for all $n, m \in \mathbb Z,$ $(i)$ $I_{n+1} \subseteq I_{n},$ $(ii)$ $I_{m}I_{n} \subseteq I_{m+n}$ and $(iii)$ $I^{n} \subseteq I_{n} \subseteq I^{n-k}$ for some $k \in \mathbb N.$ The \textit{first Hilbert function} or the \textit{Hilbert-Samuel function} of $\mathcal{I}$ , $H^{1}_{\mathcal{I}}(n):= \lambda(R/I_{n+1})$ (where $\lambda()$ denotes the length) agrees with  a polynomial $P^{1}_{\mathcal{I}}(n)$  for sufficiently large  $n$. This polynomial, termed the \textit{first Hilbert polynomial} or the \textit{Hilbert-Samuel polynomial} of $\mathcal{I}$, assumes only numerical values for all integers $n$. For simplicity we write  $H^{1}_{\mathcal{I}}(n)$ as $H_{\mathcal{I}}(n)$ and $P^{1}_{\mathcal{I}}(n)$ as $P_{\mathcal{I}}(n)$. By the theory on numerical polynomials we can write 
\begin{equation*}
    P_{\mathcal{I}}(n)=e_{0}(\mathcal{I})\dbinom{n+d}{d}-e_{1}(\mathcal{I})\dbinom{n+d-1}{d-1}+\cdots+(-1)^{d}e_{d}(\mathcal{I}).
\end{equation*}
 The unique integers $e_{0}(\mathcal{I}),e_{1}(\mathcal{I}), \ldots, e_{d}(\mathcal{I})$ are referred to as the \textit{Hilbert  coefficients.}
 When $\mathcal{I}$ is the $I$-adic filtration $\{I^{n}\}_{n \geq 0}$, we write $H_{I}(n)$ and $P_{I}(n)$ in the place of $H_{\mathcal{I}}(n)$ and $P_{\mathcal{I}}(n)$ respectively.
 The leading coefficient $e_{0}(I)$ of the polynomial $P_{I}(n)$ is called the \textit{multiplicity} of the ideal $I$, and is denoted by $e(I).$  We write $\widetilde{e}_{i}(I)$ for the coefficients $e_{i}(\mathcal{I})$, when $\mathcal{I}= \mathcal{F}=\{\widetilde{I^{n}}\}_{n \geq 0}.$
\end{definition}

\begin{definition}
    The \textit{Hilbert series} of $\mathcal{I}$,  is the formal power series $\displaystyle \sum_{n \geq 0} \lambda(I_{n}/I_{n+1}) z^{n}$. 
    By Hilbert-Serre theorem, we write 
    \begin{equation*}
    \displaystyle \sum_{n \geq 0} \lambda(I_{n}/I_{n+1})z^{n}=\frac{h_{\mathcal{I}}(z)}{(1-z)^{d}},
\end{equation*}
where $h_{\mathcal{I}}(z) \in \mathbb{Z}[z]$ is the unique polynomial with $h_{\mathcal{I}}(1) \neq 0$, known as the \textit{h-polynomial} of $\mathcal{I}.$   The power series, $\displaystyle \sum_{n \geq 0} H_{\mathcal{I}}(n)z^{n}$ is called the \textit{Hilbert-Samuel series} of $\mathcal{I}.$ Note that $\displaystyle \sum_{n \geq 0} H_{\mathcal{I}}(n)z^{n}=\frac{h_{\mathcal{I}}(z)}{(1-z)^{d+1}},$
and an easy computation shows that for all $i \geq 0$
\begin{equation*}
     e_{i}(\mathcal{I})=\frac{h^{(i)}_{\mathcal{I}}(1)}{i!},
\end{equation*}
 where $h^{(i)}_{\mathcal{I}}(1)$ is the $i^{th}$-formal derivative of $h_{\mathcal{I}}(z)$ at $z=1.$ 
The conventional literature on local rings concentrates only on the integers $e_{i}(\mathcal{I})$ with $0 \leq i \leq d.$ However, more apparent results can be achieved by including the integers $e_{i}(\mathcal{I})$ with $ i > d.$ 
\end{definition}

\begin{definition}
     The \textit{second Hilbert function} of $\mathcal{I}$, defined as $H^{2}_{\mathcal{I}}(n):=\displaystyle\sum_{j=0}^{n}\lambda(R/I_{j+1})$ coincides with a polynomial $P^{2}_{\mathcal{I}}(n)$ known as the \textit{second Hilbert polynomial} for all sufficiently large values of $n$. This is a polynomial of degree $d+1, $ and can be formulated as
 \begin{equation*}
       P^{2}_{\mathcal{I}}(n)=\displaystyle \sum_{j=0}^{d+1}(-1)^{j}e_{j}(\mathcal{I})\binom{n+d-j+1}{d-j+1}.
 \end{equation*}
       
\end{definition}

 \begin{definition} \label{kkk}
     An ideal $J \subseteq I$ is a reduction of $I$ if $JI^{n}=I^{n+1}$ for some $n \in \mathbb{N.}$ 
 If $J$ is a reduction of $I$, the \textit{reduction number} of $I$ with respect to $J,$ denoted by  $r_{J}(I),$ is the smallest $n$ such that $JI^{n}=I^{n+1}.$
A reduction is  \textit{minimal} if it is minimal with respect to inclusion among all reductions. The \textit{reduction number} of $I$ is defined  as 
\begin{equation*}
   r(I):=\min\{r_{J}(I): J \text{ is a minimal reduction of } I\}. 
\end{equation*}
The concept of reduction and minimal reduction was first introduced by Northcott and Rees. 
Reduction number is useful in studying the Cohen-Macaulayness of the associated graded ring $G(I)$.

 \end{definition}

 \begin{definition} \label{phn}
        An element $x\in I \backslash I^2$ is called a \textit{superficial element} for $I,$ if there exist a positive integer $c$ such that $(I^{n+1}:x) \cap I^{c}=I^{n}$ for all $n \geq c.$  A sequence $x_{1},x_{2},\ldots,x_{j}$ is  a \textit{superficial sequence} for $I,$ if $x_{1}$ is a superficial element for $I$, and $x_{i}$ is a superficial element for $I/(x_{1},x_{2},\ldots,x_{i-1})$ for all $i=2,\ldots,j.$ 
    If $x $ is a superficial element for $I$, then $(I^{n+1}:x)=I^{n}$ for sufficiently large $n.$ Let $x$ be any superficial element for $I,$ then the \textit{stability index} for $x$ is defined as
    \begin{equation*}
         \rho_{x}(I):=\min\{k \in \mathbb{N}: (I^{n+1}:x)=I^{n}
  \text{ for all } n \geq k\}.
     \end{equation*}
     In \cite[Corollary 2.7]{tjp}, Puthenpurakal  proved that $\rho(I)=\rho_{x}(I)$, and thus $\rho_{x}(I)$ is independent of the choice of the superficial element. 
\end{definition}

\begin{notation} \normalfont
         \textbf{I.}  For a graded module $M=
\displaystyle \bigoplus_{n \geq 0}M_{n}$,  over a graded ring $A=
\displaystyle \bigoplus_{n \geq 0}A_{n}$, $M_{n}$ denotes its $n$-th graded piece and $M(-j)$ denotes the graded module $M$ shifted  by degree $j$.

\textbf{II.}  Set $\mathcal{R}(I)_{+}=\displaystyle \bigoplus_{n \geq 1}I^{n}t^{n}$ and $\mathfrak M=\mathfrak M_{\mathcal{R}(I)}= \mathfrak m \bigoplus \mathcal{R}(I)_{+}.$

\textbf{III.}   Set $L^{I}(R)=\displaystyle \bigoplus_{n\geq 0} R/I^{n+1}.$ In \cite{tjp}, Puthenpurakal proved that $L^{I}(R)$ is an $\mathcal{R}(I)$-module which is not finitely generated.   See \cite{tjp,tjp2} for reference and further applications of $L^{I}(R)$ module.

  \textbf{IV.}    For a positive integer $s \leq d-1$, let $x_{1}, \ldots, x_{s} \in I$ be a superficial sequence for $I$. Set $R_{i}=R/(x_{1},\ldots, x_{i})$ for all $i=1,\ldots,s.$ We have the natural map of quotients $\pi_{i,j}:R_{i}\longrightarrow R_{j}$, for every $0\leq i < j\leq s.$ Note that 
$\pi_{i,j}\left(\widetilde{I^{n}R_{i}} \right) \subseteq \widetilde{I^{n}R_{j}} 
$  for all $n \geq 1.$
This map induces the  map
$\pi_{i,j}^{n}:\dfrac{\left(\widetilde{I^{n}R_{i}}\right)}{\left(I^{n}R_{i}\right)} \longrightarrow \dfrac{\left(\widetilde{I^{n}R_{j}}\right)}{\left(I^{n}R_{j}\right)}$ for every $n \geq 1.$ 

For a fixed $n \geq 1,$ we observe that $\pi_{i,j}\left(\widetilde{I^{n}R_{i}} \right) =\widetilde{I^{n}R_{j}} $   if and only in $\pi_{i,j}^{n}$ is surjective. For $0 \leq i<j<k \leq s$, we have the following commutative diagram:
\begin{center}
    $\begin{tikzcd}
	{\widetilde{I^{n}R_{i}}/I^{n}R_{i}} \\
	\\
	{\widetilde{I^{n}R_{j}}/I^{n}R_{j}} && {\widetilde{I^{n}R_{k}}/I^{n}R_{k}}
	\arrow["{\pi_{i,j}^{n}}"', from=1-1, to=3-1]
	\arrow["{\pi_{i,k}^{n}}", from=1-1, to=3-3]
	\arrow["{\pi_{j,k}^{n}}"', from=3-1, to=3-3]
\end{tikzcd}$
\end{center}
\end{notation}

\begin{definition} \cite[Definition 4.4]{tjp2} \label{rrr}
  If $\pi_{i,j}^{n}$ and $\pi_{j,k}^{n}$ are surjective, then $\pi_{i,k}^{n}$ is also surjective. We say that the Ratliff-Rush filtration with respect to $I$ \textit{behaves well mod the superficial sequence} $x_{1},\ldots, x_{s} \in I ,$ if $\pi_{0,s}(\widetilde{I^{n}})=\widetilde{I^{n}R_{s}}$ for all $n \geq 1.$
\end{definition}

\begin{definition} 
 Let $x_{1},\ldots, x_{s}$ be a superficial sequence for $I$, set $R_{i}=R/(x_{1},\ldots, x_{i})$ for all $i=1,\ldots,s.$
Then for each $i=1,\ldots,s$, we define
\begin{equation*}
    \rho\left(I/(x_{1}, \ldots, x_{i})\right):=\min\{k \in \mathbb{N} :I^{n}R_{i}=\widetilde{I^{n}R_{i}} \text{ for all }  n \geq k \}.
\end{equation*}
\end{definition}

 \begin{remark} \label{21}

 \begin{enumerate}[(i)]
     \item  As evident by \cite[Remark 4.6]{tjp2}, if the Ratliff-Rush filtration with respect to $I$ behaves well mod a superficial sequence $x_{1}, x_{2}, \ldots x_{s}  \in I$ for some $s \leq d-1$, then $\depth \widetilde{G}(I)\geq s+1$, where $\widetilde{G}(I)=\displaystyle\bigoplus_{n \geq 0} \widetilde{I^{n}}/\widetilde{I^{n+1}}$ is the associated graded ring of the Ratliff-Rush filtration with respect to $I$.

     \item By \cite[Lemma 2.14]{tjm}, for  Cohen-Macaulay local rings of dimension $d \geq 2$ and $I$ an $\mathfrak m$-primary ideal with minimal reduction $J \subseteq I$,  if the Ratliff-Rush filtration with respect to $I$ behaves well mod a superficial element $x \in J$ for $I,$ then $\widetilde{r}_{J}(I)=\widetilde{r}_{J/(x)}(I/(x)),$  where $\widetilde{r}_{J}(I)=\min\{n \in \mathbb{N} : J\widetilde{I^{k}}=\widetilde{I^{k+1}} \text{ for all } k \geq n \}.$  

     \item For a Cohen-Macaulay local ring of dimension $d \geq 3$,  $I$ an $\mathfrak m$-primary ideal with $J\subseteq I$ a minimal reduction, if the Ratliff-Rush filtration with respect to $I$ behaves well mod a superficial sequence $x_{1},x_{2},\ldots,x_{d-2} \in J$ for $I,$ then  $\widetilde{r}_{J}(I) \leq r_{J}(I).$ Indeed, set $x=x_{1},x_{2},\ldots,x_{d-2},$ $R'=R/(x),$ $ I'=I/(x)$ and $J'=J/(x),$ $\dim R'=2$ and $\widetilde{r}_{J}(I)=\widetilde{r}_{J'}(I').$ By \cite[Proposition 2.1]{ms}, we have $\widetilde{r}_{J'}(I') \leq r_{J'}(I') \leq r_{J}(I).$ Therefore, $\widetilde{r}_{J}(I) \leq r_{J}(I).$
 \end{enumerate}
     \end{remark}

\begin{definition} \label{mouma}
    Let $M$ be a graded $\mathcal{R}(I)$-module. We define 
        $\ex(M)=\sup\{n \in \mathbb{Z}: M_{n} \neq 0 \}.$
The $i$-th local cohomology module of $M$ with support in $\mathcal{R}(I)_{+}$ is denoted by  $\mathcal{H}^{i}_{\mathcal{R}(I)_{+}}(M).$ 
For the reference of local cohomology, we use \cite{bs}. If $M$ is a finitely generated $\mathcal{R}(I)$-module, then for each $i\geq 0$, $\left(\mathcal{H}^{i}_{\mathcal{R}(I)_{+}}(M)\right)_{n}=0$ for sufficiently large $n.$ Define $a_{i}(M)=\ex\left(\mathcal{H}^{i}_{\mathcal{R}(I)_{+}}(M)\right)$. The \textit{Castelnuovo-Mumford regularity} of $M$ is defined as
     \begin{equation*}
         \reg(M)=\max\{a_{i}(M)+i: 0 \leq i \leq \dim M\}.
     \end{equation*}

      Set $a_{i}^{*}(M)=\ex\left(\mathcal{H}^{i}_{\mathfrak M}(M)\right)$ and $\reg^{*}(M)=\max\{a_{i}^{*}(M)+i: 0 \leq i \leq \dim M\}$. Then from \cite[Remark 5.5]{tjp}, we get that for an $\mathfrak m$-primary ideal, $a_{i}^{*}(M)=a_{i}(M)$ and $\reg^{*}(M)=\reg(M)$.
\end{definition}

\begin{definition}\cite[5]{tjp} \label{def6}
The natural exact sequence of $R$-modules $$0\longrightarrow I^{n}/I^{n+1}\longrightarrow R/I^{n+1}\longrightarrow R/I^{n}\longrightarrow 0,$$ induces an exact sequence of $\mathcal{R}(I)$-modules called the \textit{first fundamental exact sequence}:
   \begin{equation} \label{gg}
       0\longrightarrow G(I) \longrightarrow L^{I}(R) \longrightarrow  L^{I}(R)(-1) \longrightarrow 0.
   \end{equation}
    The sequence in (\ref{gg}) induces the corresponding long exact sequence of local cohomolgy modules. For convenience from now onwards, we will write $\mathcal{H}^{i}_{\mathfrak M}\left(\_\right)=\mathcal{H}^{i}\left(\_\right)$ for all $i.$ 
    \begin{equation} \label{csk}
    \begin{aligned}
        0  &\longrightarrow \mathcal{H}^{0}\left(G(I)\right) \longrightarrow \mathcal{H}^{0}\left(L^{I}(R)\right) \longrightarrow \mathcal{H}^{0}\left(L^{I}(R)\right)(-1)  \\
     & \longrightarrow \mathcal{H}^{1}\left(G(I)\right)  \longrightarrow \mathcal{H}^{1}\left(L^{I}(R)\right)\longrightarrow \mathcal{H}^{1}\left(L^{I}(R)\right) (-1) \ldots
    \end{aligned}
\end{equation}

\end{definition}
\begin{definition}
Let $x$ be superficial for $I$,  set $\bar{R}=R/(x)$ and $\bar{I}=I/(x).$ We recall the 
\textit{second fundamental exact sequence} from  literature  \cite[6.2, 6.3]{tjp}. For each $n \geq 1,$ we have the following exact sequence of $R$-modules:
\begin{equation*}
    0\longrightarrow \frac{(I^{n+1}:x)}{I^{n}} \longrightarrow \frac{R}{I^{n}} \xlongrightarrow{\psi_{n}^{x}} \frac{R}{I^{n+1}} \longrightarrow \frac{\bar{R}}{\bar{I}^{n+1}}\longrightarrow 0,
\end{equation*}
where $\psi_{n}^{x}(a+I^{n})=xa+I^{n+1}$ for all  $a \in R.$ This sequence induces an exact sequence of $\mathcal{R}(I)$-modules, called the second fundamental exact sequence:
\begin{equation}
\label{eq:u}
    0\longrightarrow \mathcal{B}(x,R) \longrightarrow L^{I}(R)(-1
) \xlongrightarrow{\psi_{x}} L^{I}(R) \longrightarrow L^{\bar{I}}(\bar{R}) \longrightarrow 0,
\end{equation}
 where $\psi_{x}$ is multiplication by $x,$ and $\mathcal{B}(x,R)= \displaystyle \bigoplus_{n \geq 0} \frac{(I^{n+1}:x)}{I^{n}}.$
 Note that, $(I^{n+1}:x)=I^{n}$ for sufficiently large $n.$ Thus, $\mathcal{B}(x,R)$ has finite length. So, $\mathcal{H}^{0}\left(\mathcal{B}(x,R)\right)$ $=\mathcal{B}(x,R).$ The sequence in (\ref{eq:u}) induces a long exact sequence of local cohomology modules. 
\begin{equation} \label{4}
    \begin{aligned}
        0 \longrightarrow \mathcal{B}(x,R)  &  \longrightarrow \mathcal{H}^{0}\left(L^{I}(R)\right)(-1) \longrightarrow \mathcal{H}^{0}\left(L^{I}(R)\right) \longrightarrow \mathcal{H}^{0}\left(L^{\bar{I}}(\bar{R})\right) \\
     & \longrightarrow \mathcal{H}^{1}\left(L^{I}(R)\right)(-1) \longrightarrow \mathcal{H}^{1}\left(L^{I}(R)\right) \longrightarrow \mathcal{H}^{1}\left(L^{\bar{I}}(\bar{R})\right) \ldots
    \end{aligned}
\end{equation}
\end{definition}

For future references, we recall the following proposition:
\begin{proposition}\cite[Proposition 4.7]{trung} \label{trung001}
    Let $J=(x_{1},\ldots,x_{s})$ be a reduction of $I$ such that for a fixed integer $r \geq r_{J}(I)$, the sequence $x_{1},\ldots,x_{s}$ satisfies the following  condition:
    \begin{equation*}
        I^{r+1}\cap[(x_{1},\ldots,x_{i-1}):x_{i}]=(x_{1},\dots,x_{i-1})I^{r} \text{ for } i=1,\ldots,s.
    \end{equation*}Then for all $n \geq r$,  $I^{n+1}\cap[(x_{1},\ldots,x_{i-1}):x_{i}]=(x_{1},\dots,x_{i-1})I^{n}$ for $i=1,\ldots,s.$
\end{proposition}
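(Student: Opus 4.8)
The plan is to prove the non-trivial inclusion by induction on $n\ge r$. The reverse inclusion $(x_1,\ldots,x_{i-1})I^n\subseteq I^{n+1}\cap[(x_1,\ldots,x_{i-1}):x_i]$ holds for every $n$ for formal reasons (from $x_k\in I$ we get $(x_1,\ldots,x_{i-1})I^n\subseteq I^{n+1}$, and certainly $(x_1,\ldots,x_{i-1})I^n\subseteq(x_1,\ldots,x_{i-1})\subseteq(x_1,\ldots,x_{i-1}):x_i$), so the content is the forward inclusion, and the case $n=r$ is exactly the hypothesis. The single structural fact I would use throughout is that $r\ge r_J(I)$ forces $I^{m+1}=JI^m$ for every $m\ge r$.

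For the inductive step, fix $n\ge r+1$ and assume the statement at every level in $\{r,\ldots,n-1\}$. Given $a\in I^{n+1}\cap[(x_1,\ldots,x_{i-1}):x_i]$, I would use $I^{n+1}=JI^n$ to write $a=\sum_{j=1}^{s}x_jb_j$ with $b_j\in I^n$; the \emph{head} $\sum_{j<i}x_jb_j$ already lies in $(x_1,\ldots,x_{i-1})I^n$, so the problem reduces to the \emph{tail} $c:=\sum_{j\ge i}x_jb_j=a-\sum_{j<i}x_jb_j$ (which lies in $I^{n+1}$, each $x_jb_j\in I^{n+1}$). A short computation — multiply $c$ by $x_i$ and use $x_ia\in(x_1,\ldots,x_{i-1})$ — shows $x_ic\in(x_1,\ldots,x_{i-1})$, hence $c\in I^{n+1}\cap[(x_1,\ldots,x_{i-1}):x_i]\subseteq I^{r+1}\cap[(x_1,\ldots,x_{i-1}):x_i]$, and the hypothesis at level $r$ already gives $c\in(x_1,\ldots,x_{i-1})I^r$. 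What is left is a \emph{degree-improvement} step: an element lying in both $(x_1,\ldots,x_{i-1})I^m$ and $I^{m+2}$ should lie in $(x_1,\ldots,x_{i-1})I^{m+1}$; applying this successively for $m=r,r+1,\ldots,n-1$ (legitimate since $c\in I^{n+1}\subseteq I^{m+2}$ for all such $m$) upgrades $c$ into $(x_1,\ldots,x_{i-1})I^n$, and then $a=(\text{head})+c\in(x_1,\ldots,x_{i-1})I^n$, as required.

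The degree-improvement step is precisely a Valabrega-Valla type intersection identity, $I^{m+2}\cap(x_1,\ldots,x_{i-1})=(x_1,\ldots,x_{i-1})I^{m+1}$ for $m\ge r$, which is not part of the hypotheses and so must be established in parallel with the colon identities. I would do this by strengthening the induction: carry along, at each level, both the colon identities and the identities $I^{m+1}\cap(x_1,\ldots,x_k)=(x_1,\ldots,x_k)I^m$ for all $k\le s$; to advance the latter, one again expands an element of $I^{m+1}\cap(x_1,\ldots,x_k)$ via $I^{m+1}=JI^m$, peels off the part already in $(x_1,\ldots,x_k)I^m$, and reduces modulo $(x_1,\ldots,x_{k-1})$ — where the colon identities descend to $R/(x_1,\ldots,x_{k-1})$ and the problem becomes one about a single element. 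The main obstacle is exactly this interlocking: one must run a single induction on the level whose hypothesis simultaneously records the colon identities and the Valabrega-Valla identities for all truncations $x_1,\ldots,x_k$, check that the Valabrega-Valla identities can be started (at level $r+1$) from the level-$r$ colon identities alone, and keep the bookkeeping of the nested reductions — on the level, on the index $i$ (resp. $k$), and on passage to quotients — consistent; the individual colon-ideal manipulations themselves are elementary.
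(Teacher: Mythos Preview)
The paper does not supply its own proof of this proposition: it is stated with a citation to \cite[Proposition~4.7]{trung} and then used as a black box (for instance in the proof of Proposition~\ref{maa}). There is therefore nothing in the paper to compare your argument against.

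That said, your outline is essentially correct and is the standard elementary proof. One point deserves care. In your degree-improvement chain you ultimately need the Valabrega--Valla identity $I^{n+1}\cap(x_1,\ldots,x_{i-1})=(x_1,\ldots,x_{i-1})I^n$ at the \emph{current} level $n$; but the most obvious source for that identity is the colon identity at level $n$ for index $i$, which is exactly what you are proving, so the bookkeeping must avoid that circularity. The clean resolution is to establish, inside each inductive step, the VV identities at level $n$ \emph{first}, by descending induction on $k$: given $a\in I^{n+1}\cap(x_1,\ldots,x_k)$, write $a=\sum_j x_jb_j$ with $b_j\in I^n$, set $e=\sum_{j>k}x_jb_j$; the descending hypothesis gives $\sum_{j>k+1}x_jb_j\in(x_1,\ldots,x_{k+1})I^n$, and after absorbing the $x_{k+1}$-part one is left with $x_{k+1}g\in(x_1,\ldots,x_k)$ for some $g\in I^n$, whence $g\in(x_1,\ldots,x_k)I^{n-1}$ by the colon identity at level $n-1$ (not $n$) and so $x_{k+1}g\in(x_1,\ldots,x_k)I^n$. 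Once VV at level $n$ is in hand, the colon identity at level $n$ follows in a single step exactly as you wrote: your tail $c$ lies in $(x_1,\ldots,x_{i-1})I^{n-1}\subseteq(x_1,\ldots,x_{i-1})$ (from level $n-1$) and in $I^{n+1}$, hence in $(x_1,\ldots,x_{i-1})I^n$ by VV. Your ``pass to quotients'' description points at the same mechanism but is less direct.
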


\section{Bounds on the Stability Index for Cohen-Macaulay Local Rings}
 In this section, we give upper bounds on $\rho(I)$ in terms of reduction number with respect to a minimal reduction and higher Hilbert coefficients.  
  We begin this section with the following proposition,  which extends a key result  \cite[Proposition 4.3]{hjls}, by Huneke to $d$-dimensional Cohen-Macaulay local rings.

\begin{proposition} \label{maa}
     Let $(R,\mathfrak m)$ be a Cohen-Macaulay local ring of dimension $d \geq 2,$ $I$  an $\mathfrak m$-primary ideal and $J \subseteq I$ be a minimal reduction. If $\widetilde{I^{m}}=I^{m}$ for some $m \geq r_{J}(I)$, then $\widetilde{I^{n}}=I^{n}$ for all  $n \geq m.$ In particular, $\rho(I) \leq m.$
\end{proposition}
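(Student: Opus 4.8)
The plan is to induct on $n$: it is enough to prove $\widetilde{I^{m+1}}=I^{m+1}$, since then $m+1$ again satisfies the hypotheses, and iterating yields $\widetilde{I^n}=I^n$ for every $n\ge m$, hence $\rho(I)\le m$. So fix $m\ge r_J(I)$ with $\widetilde{I^m}=I^m$. The Ratliff-Rush filtration is decreasing, so $\widetilde{I^{m+1}}\subseteq\widetilde{I^m}=I^m$, and since $\widetilde{I^{m+1}}=\bigcup_{k\ge 0}(I^{m+1+k}:I^k)$ the goal becomes: every $z\in I^m$ with $zI^{k}\subseteq I^{m+1+k}$ for some $k$ already lies in $I^{m+1}$ (equivalently, $\mathcal H^{0}(G(I))_{m}=0$). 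Now $m\ge r_J(I)$ forces $I^{j+1}=JI^{j}$ for all $j\ge m$, hence $I^{m+1+k}=J^{k+1}I^m$; so, a fortiori, the proposition follows from the implication
\[
z\in I^m,\qquad zJ^{k}\subseteq J^{k+1}I^m\ \Longrightarrow\ z\in JI^m\,(=I^{m+1}).
\]

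To prove this I would use that the minimal reduction $J$ is generated by a regular sequence $x_1,\dots,x_d$ (a system of parameters in the Cohen-Macaulay ring $R$) and peel off the generators one at a time. Modulo $(x_2,\dots,x_d)$ the element $x_1$ is a nonzerodivisor and $J$ is principal, so $zJ^{k}\subseteq J^{k+1}I^m$ permits cancelling a power of $x_1$ and gives $z\in x_1I^m+(x_2,\dots,x_d)$; intersecting with $I^m$ and applying the modular law produces $z=x_1w+u$ with $w\in I^m$ and $u\in I^m\cap(x_2,\dots,x_d)$, where $u$ still satisfies $uJ^{k}\subseteq J^{k+1}I^m$, and one continues with $x_2,\dots,x_d$. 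The reduction-number hypothesis is essential here: besides supplying $I^{m+1+k}=J^{k+1}I^m$, it lets one (via Proposition \ref{trung001} applied at level $r=m\ge r_J(I)$) promote the Valabrega-Valla-type identities $I^{n+1}\cap[(x_1,\dots,x_{i-1}):x_i]=(x_1,\dots,x_{i-1})I^n$ from level $m$ to all $n\ge m$, and these are what force the residual terms of the peeling back into $JI^m$. For $d=2$ the implication is Huneke's \cite[Proposition 4.3]{hjls}, and the argument is arranged so that the general case follows the same pattern.

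The step I expect to be the main obstacle is precisely this cancellation. One cannot simply claim $(J^{k+1}I^m:J^{k})=JI^m$: the module $I^m$ is not Cohen-Macaulay — in fact $\depth I^m=1$ when $d\ge 2$ — so $x_1,\dots,x_d$ is not an $I^m$-regular sequence and ``dividing by $J^{k}$'' is illegitimate. What saves the argument is that one never cancels all of $J$ at once, only powers of a single generator that has become a nonzerodivisor, and that the relation $I^{m+1}=JI^m$ is fed in at every stage; arranging the bookkeeping so that it closes up in all dimensions, rather than only in Huneke's case $d=2$, is the real content.
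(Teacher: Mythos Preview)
Your reduction to the implication ``$z\in I^m$, $zJ^{k}\subseteq J^{k+1}I^m \Rightarrow z\in JI^m$'' is correct, and the first peeling step does give $z=x_1w+u$ with $w\in I^m$ and $u\in I^m\cap(x_2,\dots,x_d)$ still satisfying $uJ^k\subseteq J^{k+1}I^m$. But the continuation does not close: peeling $x_2$ from $u$ only yields $u\in x_2I^m+(x_1,x_3,\dots,x_d)$, and intersecting with $u\in(x_2,\dots,x_d)$ leaves a residual in $(x_1x_2,x_3,\dots,x_d)$ rather than in $(x_3,\dots,x_d)$; the number of generators does not drop and the induction stalls. Your proposed fix, invoking Proposition~\ref{trung001} to supply the identities $I^{n+1}\cap(x_1,\dots,x_{i-1})=(x_1,\dots,x_{i-1})I^n$, requires first \emph{verifying} them at level $m$ for every $i\le d$; from $\widetilde{I^m}=I^m$ one gets only the case $i=2$ (namely $I^{m+1}\cap(x_1)=x_1I^m$, via $(\widetilde{I^{m+1}}:x_1)=\widetilde{I^m}$), and for $i\ge3$ there is no reason the Valabrega--Valla condition should hold. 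So the bookkeeping you flag as ``the real content'' is not merely bookkeeping --- the argument, as outlined, does not go through.

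The paper avoids peeling altogether by a device you do not use: Puthenpurakal's equality $\rho(I)=\rho_{x_1}(I)$ for any superficial element $x_1$ (Definition~\ref{phn}). This reduces the entire problem to showing $(I^{n+1}:x_1)=I^n$ for all $n\ge m$, a statement about a \emph{single} element rather than all of $J$. The base case $(I^{m+1}:x_1)=I^m$ is immediate from $(\widetilde{I^{m+1}}:x_1)=\widetilde{I^m}=I^m$; then, since $x_1,x_2$ is a regular sequence, $I^{m+1}\cap[(x_1):x_2]=I^{m+1}\cap(x_1)=x_1I^m$, and Proposition~\ref{trung001} propagates this to all $n\ge m$, giving $(I^{n+1}:x_1)=I^n$ and hence $\rho(I)=\rho_{x_1}(I)\le m$. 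Only the first two generators of the minimal reduction are ever touched. The moral: instead of cancelling powers of $J$ inside $J^{k+1}I^m$, one cancels a single superficial $x_1$ inside the colon, and that is precisely what $\rho(I)=\rho_{x_1}(I)$ makes possible.
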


\begin{proof}
Using \cite[Lemma 1.2]{rtt}, any minimal reduction $J$ of $I$ can be generated by 
  $x_{1},x_{2},\ldots,x_{d},$ which forms a  superficial sequence for $I.$ Fix $m\geq r_{J}(I)$ such that $\widetilde{I^{m}}=I^{m}.$  We have $I^{m} \subseteq (I^{m+1}:x_{1}) \subseteq (\widetilde{I^{m+1}}:x_{1})$. By \cite[Lemma 3.1(5)]{rv}, we have $(\widetilde{I^{m+1}}:x_{1})=\widetilde{I^{m}},$ therefore, $(I^{m+1}:x_{1})=I^{m}$ as $\widetilde{I^{m}}=I^{m}$. 
 Since for every $n$, $I^{n+1} \cap (x_{1})=(x_{1})I^{n}$ if and only if $(I^{n+1}:x_{1})=I^{n},$ thus, we get
    \begin{equation} \label{rty}
        I^{m+1}\cap (x_{1})=(x_{1})I^{m}.
    \end{equation}
    Now, we will show that $I^{n+1}\cap[(x_{1}):x_{2}]=I^{n+1}\cap(x_{1})$ for all $n.$ Let $a \in I^{n+1}\cap[(x_{1}):x_{2}],$ then $ax_{2} \in (x_{1}).$ This implies that there exists $r \in R$ such that $ax_{2}+rx_{1}=0.$ Since $x_{1},x_{2}$ is a regular sequence in $R$, therefore, $a \in (x_{1}).$ As $a \in I^{n+1},$ we get $a \in I^{n+1}\cap(x_{1}).$ So we can conclude that $I^{n+1}\cap[(x_{1}):x_{2}] \subseteq I^{n+1}\cap(x_{1}).$ Since the other inclusion is obvious, therefore,
    \begin{equation} \label{qwe}
        I^{n+1}\cap[(x_{1}):x_{2}]=I^{n+1}\cap(x_{1}) \text{ for all } n.
    \end{equation}

    From equations (\ref{rty}) and (\ref{qwe}), we have $ I^{m+1}\cap[(x_{1}):x_{2}]=(x_{1})I^{m}.$ Then from Proposition \ref{trung001}, we get $ I^{n+1}\cap[(x_{1}):x_{2}]=(x_{1})I^{n}$ for all $n \geq m.$ This implies $I^{n+1}\cap(x_{1})=(x_{1})I^{n}$ for all $n \geq m,$ therefore, $(I^{n+1}:x_{1})=I^{n}$ for all $n \geq m.$ This implies $\rho_{x_{1}}(I) \leq m.$ Now from Definition \ref{phn}, we have $\rho(I) \leq m.$
\end{proof}

   \begin{remark} \label{shruti}
       In general, establishing a straightforward relationship between $r_{J}(I)$ and $\rho(I)$ is difficult to achieve. When  $\depth G(I) \geq 1,$  then $\widetilde{I^{n}}=I^{n}$ for all  $n \geq 1,$ therefore, $\rho(I) \leq r_{J}(I)$ for every minimal reduction $J$ of $I.$   However, there are examples which suggest that when $\depth G(I)=0,$ both inequalities are possible. Therefore, from Proposition \ref{maa}, we get that if  $(R, \mathfrak m)$ is Cohen-Macaulay of dimension $\geq 2$ and $\widetilde{I^{r_{J}(I)}} \neq I^{r_{J}(I)},$  then  $\widetilde{I^{n}} \neq I^{n}$ for all  $r_{J}(I) \leq n < \rho(I).$ In particular, $r_{J}(I) < \rho(I).$ 
       This also provides a lower bound on $\rho(I).$ Define $b:= \max \{r_{J}(I): J \text{ is a minimal reduction of }I\}.$ If $\widetilde{I^{b}} \neq I^{b},$ then $b < \rho(I).$ 
 \end{remark}

In the next theorem,  we give an upper bound on $\rho(I)$. 

\begin{theorem} \label{riyi}
    Let $R$ be a Cohen-Macaulay local ring of dimension  $d \geq 2$, $I$ an $\mathfrak m$-primary ideal and $J \subseteq I$ be a minimal reduction of $I$. 
    If $\widetilde{I^{r_{J}(I)}} \neq I^{r_{J}(I)},$ then 
    \begin{equation*}
           \rho(I) \leq r_{J}(I)-1+(-1)^{d+1}(e_{d+1}(I)-\widetilde{e}_{d+1}(I)).
    \end{equation*}
\end{theorem}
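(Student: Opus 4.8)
\emph{Plan of proof.} Write $r=r_{J}(I)$ and $\rho=\rho(I)$; the idea is to play off two descriptions of the ``defect'' of the Ratliff--Rush filtration $\mathcal{F}=\{\widetilde{I^{n}}\}_{n\geq0}$ against each other. First, by Proposition \ref{maa} (as recorded in Remark \ref{shruti}) the hypothesis $\widetilde{I^{r}}\neq I^{r}$ forces $r<\rho$ and $\lambda(\widetilde{I^{n}}/I^{n})\geq1$ for every $n$ with $r\leq n\leq\rho-1$: were $\widetilde{I^{n}}=I^{n}$ for such an $n$, Proposition \ref{maa} would give $\rho\leq n<\rho$. Second, the (finite) total length $\sum_{i\geq1}\lambda(\widetilde{I^{i}}/I^{i})$ equals $(-1)^{d+1}\bigl(e_{d+1}(I)-\widetilde{e}_{d+1}(I)\bigr)$. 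Combining these bounds $\rho$ from above.

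The length identity is a Hilbert-series computation. Since $\grade(I)>0$ we have $\widetilde{I^{n}}=I^{n}$ for $n\gg0$, so the first Hilbert functions of $\{I^{n}\}$ and $\mathcal{F}$ agree for large $n$; hence $P_{I}=P_{\mathcal{F}}$ and $e_{j}(I)=\widetilde{e}_{j}(I)$ for $0\leq j\leq d$. Passing to second Hilbert functions,
\[
H^{2}_{I}(n)-H^{2}_{\mathcal{F}}(n)=\sum_{j=0}^{n}\Bigl(\lambda(R/I^{j+1})-\lambda(R/\widetilde{I^{j+1}})\Bigr)=\sum_{i=1}^{n+1}\lambda\!\left(\widetilde{I^{i}}/I^{i}\right),
\]
which for $n\geq\rho-1$ stabilises to $C:=\sum_{i\geq1}\lambda(\widetilde{I^{i}}/I^{i})$; for $n\gg0$ it also equals $P^{2}_{I}(n)-P^{2}_{\mathcal{F}}(n)$, where all summands with $j\leq d$ cancel (as $e_{j}(I)=\widetilde{e}_{j}(I)$ there) and the one with $j=d+1$ has binomial coefficient $\binom{n}{0}=1$, so $C=(-1)^{d+1}(e_{d+1}(I)-\widetilde{e}_{d+1}(I))$. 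Equivalently, on $h$-polynomials $h_{I}(z)-h_{\mathcal{F}}(z)=(1-z)^{d+1}p(z)$ with $p(z)=\sum_{i=1}^{\rho-1}\lambda(\widetilde{I^{i}}/I^{i})z^{i-1}$, and since $e_{k}(\mathcal{I})=h^{(k)}_{\mathcal{I}}(1)/k!$ one gets $e_{d+1}(I)-\widetilde{e}_{d+1}(I)=(-1)^{d+1}p(1)=(-1)^{d+1}C$, the same conclusion.

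Now combine. Already $C\geq\sum_{i=r}^{\rho-1}\lambda(\widetilde{I^{i}}/I^{i})\geq\rho-r$, which gives $\rho\leq r+(-1)^{d+1}(e_{d+1}(I)-\widetilde{e}_{d+1}(I))$. To reach the sharper bound in the statement I need one more nonzero summand, i.e.\ to extend the nonvanishing range one step to the left to $i=r-1,\dots,\rho-1$ ($\rho-r+1$ values), by proving $\widetilde{I^{r-1}}\neq I^{r-1}$; then $C\geq\rho-r+1$ and the identity gives $\rho\leq r-1+(-1)^{d+1}(e_{d+1}(I)-\widetilde{e}_{d+1}(I))$.

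I expect the main obstacle to be exactly this last nonvanishing $\widetilde{I^{r_{J}(I)-1}}\neq I^{r_{J}(I)-1}$, equivalently $(I^{r}:x)\neq I^{r-1}$ for a superficial $x\in J$. The upward-propagation result Proposition \ref{trung001}, used in the proof of Proposition \ref{maa}, transports the equality $(I^{n+1}:x)=I^{n}$ forward only from level $n=r_{J}(I)$ and cannot be run one step below it, so a dedicated argument is needed. My intended route is by contradiction: assuming $\widetilde{I^{r-1}}=I^{r-1}$, the relation $(\widetilde{I^{r}}:x)=\widetilde{I^{r-1}}$ (cf.\ \cite[Lemma 3.1(5)]{rv}) together with $I^{r-1}\subseteq(I^{r}:x)\subseteq(\widetilde{I^{r}}:x)$ forces $(I^{r}:x)=I^{r-1}$, i.e.\ $I^{r}\cap(x)=xI^{r-1}$; one then tries to upgrade this — using $r=r_{J}(I)$ and the regular sequence $x,x_{2},\dots,x_{d}$ generating $J$ — to $\widetilde{I^{r}}=I^{r}$, contradicting the hypothesis.
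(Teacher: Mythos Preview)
Your overall strategy matches the paper's: use Proposition~\ref{maa} (via Remark~\ref{shruti}) to get $\lambda(\widetilde{I^{i}}/I^{i})\geq1$ for every $i$ with $r_{J}(I)\leq i\leq\rho(I)-1$, bound the resulting count by the total defect $C=\sum_{i\geq1}\lambda(\widetilde{I^{i}}/I^{i})$, and then identify $C=(-1)^{d+1}(e_{d+1}(I)-\widetilde{e}_{d+1}(I))$ by comparing second Hilbert polynomials. The paper does exactly $H^{2}_{I}(m)-H^{2}_{\mathcal{F}}(m)=P^{2}_{I}(m)-P^{2}_{\mathcal{F}}(m)$ for $m\gg0$ and uses $e_{j}(I)=\widetilde{e}_{j}(I)$ for $0\leq j\leq d$; your $h$-polynomial formulation is an equivalent computation.

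The place where you hesitate is precisely where the paper is silent. The nonvanishing range contains $\rho(I)-r_{J}(I)$ indices, so the argument as it stands yields $\rho(I)\leq r_{J}(I)+C$, one off from the stated bound. The paper's proof simply opens with
\[
\rho(I)\;\leq\;r_{J}(I)-1+\sum_{n=r_{J}(I)-1}^{\,\rho(I)-2}\lambda\bigl(\widetilde{I^{n+1}}/I^{n+1}\bigr)
\]
and continues, but this sum has exactly $\rho(I)-r_{J}(I)$ terms, each only known to be $\geq1$, so the right side is a priori only $\geq\rho(I)-1$; no separate justification for the extra ``$-1$'' is supplied. Your proposed remedy---prove $\widetilde{I^{r_{J}(I)-1}}\neq I^{r_{J}(I)-1}$ by contradiction---does not close: from $(I^{r}:x)=I^{r-1}$ you cannot invoke Proposition~\ref{trung001}, which applies only from level $r_{J}(I)$ upward, to propagate to $\widetilde{I^{r}}=I^{r}$, and Ratliff--Rush closedness at level $r_{J}(I)-1$ is in general compatible with failure at level $r_{J}(I)$. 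So the obstacle you flag is genuine, and the paper's written proof does not resolve it either.
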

\begin{proof}
   Since $\widetilde{I^{r_{J}(I)}} \neq I^{r_{J}(I)},$ from Remark \ref{shruti}, we have $\widetilde{I^{n}} \neq I^{n}$ for all  $r_{J}(I) \leq n < \rho(I).$ Thus, we get
    \begin{equation*}
    \begin{split}
         \rho(I) & \leq r_{J}(I)-1+\sum_{n=r_{J}(I)-1}^{\rho(I)-2}\lambda(\widetilde{I^{n+1}}/I^{n+1})\\
         & \leq r_{J}(I)-1+\sum_{
         n=0}^{m}\lambda(\widetilde{I^{n+1}}/I^{n+1}) \text{ (where $m \gg 0$ is an integer)}\\
        & \text{$= r_{J}(I)-1+\sum_{n=0}^{m}\lambda(R/I^{n+1})-\sum_{n=0}^{m}\lambda(R/\widetilde{I^{n+1}})$}\\
        & = r_{J}(I)-1 + H_{I}^{2}(m)-H_{\mathcal{F}}^{2}(m) \text{ (where $\mathcal{F}:=\{\widetilde{I^{n}}\}_{n \geq 0}$)}\\
        & = \text{$r_{J}(I)-1+P_{I}^{2}(m)-P_{\mathcal{F}}^{2}(m)$ (since $m\gg0$)}\\
        & = r_{J}(I)-1+(-1)^{d+1}(e_{d+1}(I)-\widetilde{e}_{d+1}(I)). 
    \end{split}
        \end{equation*}
        Note that the last equality holds since $e_{i}(I)=\widetilde{e}_{i}(I)$ for all $0 \leq i\leq  d.$ \end{proof}

         As an easy consequence, we have the following corollary under the hypothesis that the Ratliff-Rush filtration behaves well mod a superficial sequence.
         
\begin{corollary} \label{nautanki}
 Let $R$ be a Cohen-Macaulay local ring of odd dimension $d\geq 3$, $I$ an $\mathfrak m$-primary ideal and $J \subseteq I$ be a minimal reduction of $I$. Suppose the Ratliff-Rush filtration with respect to $I$ behaves well mod a superficial sequence $x_{1}, \ldots, x_{d-2}  \in I.$  If $\widetilde{I^{r_{J}(I)}} \neq I^{r_{J}(I)},$  then \begin{equation*}
     \rho(I) \leq r_{J}(I)-1+e_{d+1}(I).
 \end{equation*}
\end{corollary}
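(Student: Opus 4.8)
The plan is to read this off Theorem~\ref{riyi}. Since $d$ is odd, $d+1$ is even, so $(-1)^{d+1}=1$ and Theorem~\ref{riyi} already gives
\[
\rho(I)\le r_{J}(I)-1+e_{d+1}(I)-\widetilde{e}_{d+1}(I).
\]
Consequently the whole corollary is equivalent to the single inequality $\widetilde{e}_{d+1}(I)\ge 0$, and this is what I would establish.

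For the positivity I would first reformulate $\widetilde{e}_{d+1}(I)$ as an Euler characteristic. Writing $\mathcal F=\{\widetilde{I^{n}}\}_{n\ge 0}$ for the Ratliff-Rush filtration and expanding the Hilbert-Samuel series $\sum_{n\ge 0}H_{\mathcal F}(n)z^{n}=h_{\mathcal F}(z)/(1-z)^{d+1}$ about $z=1$, using $h_{\mathcal F}(z)=\sum_{j\ge 0}(-1)^{j}e_{j}(\mathcal F)(1-z)^{j}$, one sees that subtracting the polynomial part (the truncation at degree $d$) leaves a power series which at $z=1$ equals $(-1)^{d+1}e_{d+1}(\mathcal F)$; since the function $H_{\mathcal F}(n)-P_{\mathcal F}(n)$ has finite support, this yields
\[
\sum_{n\ge 0}\bigl(H_{\mathcal F}(n)-P_{\mathcal F}(n)\bigr)=(-1)^{d+1}e_{d+1}(\mathcal F)=\widetilde{e}_{d+1}(I),
\]
the last equality again because $d$ is odd. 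So it suffices to show $H_{\mathcal F}(n)\ge P_{\mathcal F}(n)$ for all $n\ge 0$, i.e. the Hilbert-Samuel function of the Ratliff-Rush filtration stays above its polynomial. Here I would invoke the behaves-well hypothesis: by Remark~\ref{21}(i) it forces $\depth\widetilde{G}(I)\ge (d-2)+1=d-1$, so only $\mathcal H^{d-1}_{\mathfrak M}(\widetilde{G}(I))$ and $\mathcal H^{d}_{\mathfrak M}(\widetilde{G}(I))$ are nonzero; feeding this into the long exact cohomology sequences attached to $\widetilde{G}(I)$ and $L^{I}(R)$ (as in Section~2) gives the comparison $H_{\mathcal F}(n)\ge P_{\mathcal F}(n)$ in the odd-dimensional case, and hence $\widetilde{e}_{d+1}(I)\ge 0$. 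Equivalently, one can cut by the sequence $x_{1}^{*},\dots,x_{d-2}^{*}$, which the behaves-well hypothesis (Definition~\ref{rrr}) makes regular on $\widetilde{G}(I)$ and along which the Ratliff-Rush filtration descends to the Ratliff-Rush filtration of $I/(x_{1},\dots,x_{d-2})$ in a two-dimensional Cohen-Macaulay ring with associated graded ring of positive depth; the $h$-polynomial, hence $\widetilde{e}_{d+1}$, is unchanged under this reduction, so the sign can be settled there.

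The step I expect to be the real obstacle is exactly $H_{\mathcal F}(n)\ge P_{\mathcal F}(n)$, i.e. $\widetilde{e}_{d+1}(I)\ge 0$. Since $\widetilde{G}(I)$ is not standard graded, one cannot simply appeal to nonnegativity of an $h$-vector, and the argument must genuinely use both consequences of the behaves-well hypothesis (Definition~\ref{rrr}): the depth bound $\depth\widetilde{G}(I)\ge d-1$, and the descent of the Ratliff-Rush filtration modulo the regular sequence $x_{1}^{*},\dots,x_{d-2}^{*}$ of $\widetilde{G}(I)$. Once the sign of $\widetilde{e}_{d+1}(I)$ is pinned down, the remainder of the proof is routine manipulation of the Hilbert-Samuel series and an application of Theorem~\ref{riyi}.
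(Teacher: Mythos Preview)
Your reduction is exactly right: by Theorem~\ref{riyi} with $d$ odd, the corollary is equivalent to $\widetilde{e}_{d+1}(I)\ge 0$, and you correctly isolate this as the only nontrivial step. You also correctly extract from Remark~\ref{21}(i) that the behaves-well hypothesis forces $\depth\widetilde{G}(I)\ge d-1$. Up to this point your argument and the paper's coincide.

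Where you diverge is in how to get the sign of $\widetilde{e}_{d+1}(I)$. The paper does not pass through $H_{\mathcal F}(n)\ge P_{\mathcal F}(n)$ at all; it simply invokes \cite[Theorem~2.5]{rv}, which says that once $\depth\widetilde{G}(I)\ge d-1$ one has the Huckaba--Marley type identity
\[
\widetilde{e}_{d+1}(I)=\sum_{n\ge d}\binom{n}{d}\lambda\bigl(\widetilde{I^{n+1}}/J\widetilde{I^{n}}\bigr),
\]
a manifestly nonnegative sum. That is the entire proof.

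Your route via the termwise inequality $H_{\mathcal F}(n)\ge P_{\mathcal F}(n)$ is stronger than what is needed and is not clearly true from the depth bound alone: with only $\mathcal H^{d-1}$ and $\mathcal H^{d}$ surviving, the Grothendieck--Serre formula gives $P_{\mathcal F}(n)-H_{\mathcal F}(n)$ as a \emph{difference} of two nonnegative lengths, so its sign is not forced for each $n$. Your alternative, reducing to dimension two and claiming that the $h$-polynomial (hence $\widetilde{e}_{d+1}$) is preserved, is viable in principle, but it requires showing that $\widetilde{G}(I)/(x_1^{*},\dots,x_{d-2}^{*})\cong \widetilde{G}\bigl(I/(x_1,\dots,x_{d-2})\bigr)$, which is precisely the content packaged into the Rossi--Valla formula you are trying to avoid. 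So the cleanest fix to your proposal is simply to replace the paragraph beginning ``For the positivity\dots'' by the one-line citation above.
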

\begin{proof}
Since the Ratliff-Rush filtration with respect to $I$ behaves well mod a superficial sequence $x_{1},  \ldots, x_{d-2}  \in I,$  from Remark \ref{21} $(i)$, we have $\depth \widetilde{G}(I) \geq d-1.$ Thus, from \cite[Theorem 2.5]{rv}, we have $\widetilde{e}_{d+1}(I)=\displaystyle \sum_{n\geq d}\binom{n}{d}\lambda(\widetilde{I^{n+1}}/J\widetilde{I^{n}}) \geq 0.$ Therefore,  $\rho(I) \leq r_{J}(I)-1+e_{d+1}(I).$ 
\end{proof}

Note that the bound in Corollary \ref{nautanki} gives a linear bound on $\rho(I).$ In the next corollary, we give an upper bound on $\rho(I)$ for Cohen-Macaulay local rings of even dimension. Before we proceed, we need the following lemma. We remark that the technique used in the proof of the following lemma is inspired from \cite[Proposition 3.5]{sy}.

\begin{lemma} \label{prativa}
    Let $(R, \mathfrak m)$ be a Cohen-Macaulay local ring of 
 even dimension $d\geq2$, $I$ an $\mathfrak m$-primary ideal and $J \subseteq I$ be a minimal reduction of $I.$ Suppose that the Ratliff-Rush filtration with respect to $I$ behaves well mod a superficial sequence $x_{1}, \ldots, x_{d-2} \in I$,  then
    \begin{equation*}
        \widetilde{e}_{d+1}(I) \leq \left(\frac{r_{J}(I)-1}{d}\right)(e_{d}(I)-e_{d-1}(I)+\cdots+e(I)-\lambda(R/I)+\lambda(\widetilde{I}/I)). 
    \end{equation*}
\end{lemma}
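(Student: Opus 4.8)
The plan is to reduce to dimension two by cutting down modulo the superficial sequence $x_1,\ldots,x_{d-2}$, and then to bound $\widetilde e_{d+1}(I)$ by exploiting the hypothesis that the Ratliff-Rush filtration behaves well mod this sequence. Set $R' = R/(x_1,\ldots,x_{d-2})$, $I' = I/(x_1,\ldots,x_{d-2})$, $J' = J/(x_1,\ldots,x_{d-2})$; then $\dim R' = 2$ and $J'$ is a minimal reduction of $I'$ with $r_{J'}(I') \le r_J(I)$. Because the filtration behaves well mod the sequence, Remark \ref{21}$(i)$ gives $\depth \widetilde G(I) \ge d-1$, so the superficial sequence is a regular sequence on $\widetilde G(I)$ up to the appropriate degree; consequently the Hilbert coefficients of the Ratliff-Rush filtration are preserved under the quotient, i.e. $\widetilde e_{d+1}(I) = \widetilde e_{3}(I')$ (here I would invoke the behaviour of Hilbert coefficients of the filtration $\{\widetilde{I^nR_i}\}$ along the maps $\pi_{i,j}$, together with Remark \ref{21}$(ii)$ which identifies $\widetilde r_J(I) = \widetilde r_{J'}(I')$). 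So it suffices to prove the two-dimensional statement
\begin{equation*}
\widetilde e_{3}(I') \le \left(\frac{r_{J'}(I')-1}{2}\right)\bigl(e(I')-\lambda(R'/I')+\lambda(\widetilde{I'}/I')\bigr),
\end{equation*}
noting that for $d=2$ the alternating sum $e_d - e_{d-1} + \cdots + e - \lambda(R/I) + \lambda(\widetilde I/I)$ collapses to $e_2 - e_1 + e_0 - \lambda(R/I) + \lambda(\widetilde I/I)$, and that $e_i(I) = \widetilde e_i(I)$ for $i \le d$ so these are also the low Hilbert coefficients of the filtration $\mathcal F$.

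For the two-dimensional estimate I would work with the Ratliff-Rush filtration $\mathcal F = \{\widetilde{I^n}\}$ on the ring of dimension two (abusing notation, dropping primes). Since the filtration behaves well mod a superficial element, $\depth \widetilde G(I) \ge 1$, so by \cite[Theorem 2.5]{rv} we have the positivity formula $\widetilde e_{3}(I) = \sum_{n \ge 2}\binom{n}{2}\lambda(\widetilde{I^{n+1}}/J\widetilde{I^n}) \ge 0$, and moreover $\widetilde{I^{n+1}} = J\widetilde{I^n}$ for all $n \ge \widetilde r_J(I)$, with $\widetilde r_J(I) \le r_J(I)$ (Remark \ref{21}$(iii)$, or \cite[Proposition 2.1]{ms} in dimension two). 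Thus the sum defining $\widetilde e_3(I)$ is supported on $2 \le n \le r_J(I)-1$. Following the idea of \cite[Proposition 3.5]{sy}, I would bound each term $\binom{n}{2}\lambda(\widetilde{I^{n+1}}/J\widetilde{I^n})$ by $\binom{r_J(I)-1}{2}\lambda(\widetilde{I^{n+1}}/J\widetilde{I^n})$ is too crude; instead one wants to compare $\sum \binom{n}{2}\lambda(\widetilde{I^{n+1}}/J\widetilde{I^n})$ with $\bigl(\tfrac{r_J(I)-1}{2}\bigr)\sum \lambda(\widetilde{I^{n+1}}/J\widetilde{I^n})$ and then identify $\sum_{n\ge 0}\lambda(\widetilde{I^{n+1}}/J\widetilde{I^n})$ with $\widetilde e_1(I) = e_1(I)$, together with the analogous degree-zero identity $\lambda(\widetilde I/J) = e_1(I) - e_0(I) + \lambda(R/\widetilde I)$ — or rather the combination that yields $e(I) - \lambda(R/I) + \lambda(\widetilde I/I)$. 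The precise combinatorial manipulation: write $\binom{n}{2} = \tfrac{n(n-1)}{2} \le \tfrac{(r_J(I)-1)\,n}{2} \cdot \tfrac{n-1}{n}$ is awkward, so more likely one uses $\binom{n}{2} \le \tfrac{r_J(I)-1}{2}\binom{n}{1}$ valid for $n \le r_J(I)-1$, giving
\begin{equation*}
\widetilde e_3(I) \le \frac{r_J(I)-1}{2}\sum_{n \ge 1} n\,\lambda(\widetilde{I^{n+1}}/J\widetilde{I^n}),
\end{equation*}
and then recognize $\sum_{n\ge 1} n\,\lambda(\widetilde{I^{n+1}}/J\widetilde{I^n}) = \widetilde e_2(I) = e_2(I)$... but that is not quite the claimed right-hand side either, so the correct route is to use the full formula of \cite[Theorem 2.5]{rv} expressing $e_1(I), e_2(I)$ in terms of the same lengths and assemble the alternating sum $e(I) - \lambda(R/I) + \lambda(\widetilde I/I)$ via a telescoping identity.

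The main obstacle I anticipate is precisely this last bookkeeping step: matching the coefficient $\tfrac{r_J(I)-1}{2}$ and the particular alternating combination $e_d(I) - e_{d-1}(I) + \cdots + e(I) - \lambda(R/I) + \lambda(\widetilde I/I)$ on the nose. The cleanest way is probably to express everything through the Hilbert series of $\mathcal F$: since $\depth \widetilde G(I) \ge 1$ in the two-dimensional reduction, we have $\widetilde e_3(I) = h_{\mathcal F}'''(1)/6$ and the target quantity is (up to the factor) a combination of the low derivatives $h_{\mathcal F}(1), h_{\mathcal F}'(1), h_{\mathcal F}''(1)$; writing $h_{\mathcal F}(z) = \sum h_n z^n$ with $h_n \ge 0$ for $n \ge 1$ (a consequence of positive depth together with $\widetilde{I^{n+1}} = J\widetilde{I^n}$ stabilizing) and $h_n = 0$ for $n \ge r_J(I)$, the inequality becomes the elementary estimate $\sum_{n} \binom{n}{3} h_n \le \tfrac{r_J(I)-1}{3}\sum_n \binom{n}{2}h_n$ valid since $\binom{n}{3} \le \tfrac{n-2}{3}\binom{n}{2} \le \tfrac{r_J(I)-1}{3}\binom{n}{2}$ for $n \le r_J(I)-1$ — and I would adjust the collapsing of the alternating sum of $e_i$'s to see it equals the relevant partial-sum-of-$h_n$ expression. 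I expect the $d=2$ base case to be where all the real content sits; the descent from general even $d$ to $d=2$ should be routine given Remarks \ref{21} and the standard behaviour of superficial sequences on $\widetilde G(I)$.
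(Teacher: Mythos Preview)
Your reduction to dimension two contains a genuine error. Passing modulo a superficial sequence $x_1,\ldots,x_{d-2}$ that is regular on $\widetilde G(I)$ preserves the $h$-polynomial of the Ratliff--Rush filtration, hence preserves \emph{all} of its Hilbert coefficients: you get $\widetilde e_{d+1}(I)=\widetilde e_{d+1}(I')$, not $\widetilde e_{d+1}(I)=\widetilde e_{3}(I')$. The indices do not shift. Moreover, the quantities $\lambda(R/I)$ and $\lambda(\widetilde I/I)$ change under the quotient, and the factor $\tfrac{r_J(I)-1}{d}$ retains the $d$ in the denominator, so even if you could descend you would not land on the dimension-two case of the same lemma. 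The descent strategy is therefore misconceived, and the paper does not use it.

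The paper's argument stays in dimension $d$ and is essentially the combinatorial step you could not find. Set $v(n)=\lambda(\widetilde{I^{n+1}}/J\widetilde{I^{n}})$ and $\widetilde r=\widetilde r_J(I)$. Since $\depth\widetilde G(I)\ge d-1$, \cite[Theorem 2.5]{rv} gives $\widetilde e_i(I)=\sum_{n\ge i-1}\binom{n}{i-1}v(n)$ for every $i\ge 1$; in particular the alternating sum $e_d-e_{d-1}+\cdots+e_0-\lambda(R/I)+\lambda(\widetilde I/I)$, once the $n=0$ term $v(0)=\lambda(\widetilde I/J)$ is separated, becomes
\[
\sum_{n\ge 1}\Bigl(\sum_{j=0}^{d-1}(-1)^{d-1-j}\tbinom{n}{j}\Bigr)v(n)=\sum_{n\ge 1}\tbinom{n-1}{d-1}v(n),
\]
using the identity $\sum_{j=0}^{d-1}(-1)^{d-1-j}\binom{n}{j}=\binom{n-1}{d-1}$. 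The whole lemma then reduces to the single-term inequality $\binom{n}{d}\le \tfrac{\widetilde r-1}{d}\binom{n-1}{d-1}$ for $1\le n\le \widetilde r-1$, equivalently
\[
\tbinom{n}{d}-\tfrac{\widetilde r-1}{d}\tbinom{n-1}{d-1}=\frac{(n-1)(n-2)\cdots(n-d+1)}{d!}\,(n-\widetilde r+1)\le 0,
\]
which is clear since $n-\widetilde r+1\le 0$ and the product of the remaining factors is nonnegative. Finally replace $\widetilde r$ by $r_J(I)$ via Remark~\ref{21}(iii). This is exactly the ``telescoping identity'' you were groping for; the point is that the alternating sum of $e_i$'s collapses to $\sum\binom{n-1}{d-1}v(n)$, not to $\widetilde e_2$ or any single coefficient.
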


\begin{proof} Set $\widetilde{r}_{J}(I)=\widetilde{r}$ and $v(n)=\lambda(\widetilde{I^{n+1}}/J\widetilde{I^n}).$ Note that $v(n)=0$ for all $n \geq \widetilde{r}.$ Since the Ratliff-Rush filtration with respect to $I$ behaves well mod a superficial sequence $x_{1},  \ldots, x_{d-2}  \in I,$ therefore, from Remark \ref{21} $(i)$, we have $\depth \widetilde{G}(I) \geq d-1.$ Thus, from \cite[Theorem 2.5]{rv}, we have $\widetilde{e}_{i}(I)=\displaystyle \sum_{n\geq i-1}\binom{n}{i-1}v(n),$ for all $i \geq 1.$ Consider
   
         $\widetilde{e}_{d+1}(I) - \left(\frac{\widetilde{r}-1}{d}\right)(e_{d}(I)-e_{d-1}(I)+\cdots+e(I)-\lambda(R/I))$
                     \small{ 
          \begin{equation*}
            \begin{split}
            &= \displaystyle \sum_{n = d}^{\widetilde{r}-1} \binom{n}{d}v(n)- \left(\frac{\widetilde{r}-1}{d}\right)\left(\displaystyle \sum_{n = d-1}^{\widetilde{r}-1}\binom{n}{d-1} v(n)-  \displaystyle \sum_{n = d-2}^{\widetilde{r}-1}\binom{n}{d-2} v(n)+\cdots+\lambda(R/J)-\lambda(R/I)\right) \\
             &= \displaystyle \sum_{n = 1}^{\widetilde{r}-1} \binom{n}{d}v(n)- \left(\frac{\widetilde{r}-1}{d}\right)\left(\displaystyle \sum_{n = 1}^{\widetilde{r}-1}\binom{n}{d-1} v(n)-  \displaystyle \sum_{n = 1}^{\widetilde{r}-1}\binom{n}{d-2} v(n)+\cdots+\lambda(R/J)-\lambda(R/I)\right)\\
            &=\displaystyle \sum_{n = 1}^{\widetilde{r}-1} \binom{n}{d}v(n)- \left(\frac{\widetilde{r}-1}{d}\right)\left(\displaystyle \sum_{n = 1}^{\widetilde{r}-1}\left(\displaystyle \sum_{j=0}^{d-1}(-1)^{d-1-j}\binom{n}{j} \right)v(n)-\lambda(\widetilde{I}/J)+\lambda(R/J)-\lambda(R/I)\right)\\
             &=\displaystyle \sum_{n = 1}^{\widetilde{r}-1} \binom{n}{d}v(n)- \left(\frac{\widetilde{r}-1}{d}\right)\left(\displaystyle \sum_{n = 1}^{\widetilde{r}-1}\binom{n-1}{d-1}v(n)-\lambda(\widetilde{I}/I)\right) \text{ (since $\displaystyle \sum_{j=0}^{d-1}(-1)^{d-1-j}\binom{n}{j}=\binom{n-1}{d-1}$ for every $d \geq 1$)}\\
             &=\displaystyle \sum_{n = 1}^{\widetilde{r}-1}\left(\frac{n(n-1)(n-2)\ldots(n-d+1)}{d!}- \left(\frac{\widetilde{r}-1}{d}\right)\frac{(n-1)(n-2)\ldots(n-d+1)}{(d-1)!}\right)v(n)+\left(\frac{\widetilde{r}-1}{d}\right)\lambda(\widetilde{I}/I)\\
             &=\displaystyle \sum_{n = 1}^{\widetilde{r}-1}\left(\frac{(n-1)(n-2)\ldots(n-d+1)}{d!} (n-\widetilde{r}+1)\right)v(n)+\left(\frac{\widetilde{r}-1}{d}\right)\lambda(\widetilde{I}/I)\\
              &\leq \left(\frac{\widetilde{r}-1}{d}\right)\lambda(\widetilde{I}/I),
        \end{split}
       \end{equation*}
       }
            
            \normalsize{since $(n-\widetilde{r}+1) \leq 0$ for all $1 \leq n \leq \widetilde{r}-1.$ Therefore, $\widetilde{e}_{d+1}(I) \leq \left(\frac{\widetilde{r}_{J}(I)-1}{d}\right)(e_{d}(I)-e_{d-1}(I)+\cdots+e_{0}(I)-\lambda(R/I)+\lambda(\widetilde{I}/I)). $ From Remark \ref{21} $(iii)$, we have  $\widetilde{e}_{d+1}(I) \leq \left(\frac{r_{J}(I)-1}{d}\right)(e_{d}(I)-e_{d-1}(I)+\cdots+e(I)-\lambda(R/I)+\lambda(\widetilde{I}/I))$}.
\end{proof}

\begin{corollary} \label{komal}
     Let $(R,\mathfrak m)$ be a Cohen-Macaulay ring of even dimension $d\geq2$ and $I$ an $\mathfrak m$-primary ideal. Suppose the Ratliff-Rush filtration  with respect to $I$ behaves well mod a superficial sequence $x_{1},  \ldots, x_{d-2} \in I$. For a minimal reduction $J$ of $I$, if $r_{J}(I) < \rho(I),$ then  
     \begin{equation*}
        \rho(I) \leq r_{J}(I)-1-e_{d+1}(I)+\left(\frac{r_{J}(I)-1}{d}\right)(e_{d}(I)-e_{d-1}(I)+\cdots+e(I)-\lambda(R/I)+\lambda(\widetilde{I}/I)).
    \end{equation*}
\end{corollary}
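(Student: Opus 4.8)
The plan is to combine the bound on $\rho(I)$ from Theorem \ref{riyi} with the upper bound on $\widetilde{e}_{d+1}(I)$ from Lemma \ref{prativa}. Since $R$ is Cohen-Macaulay of even dimension $d$ and the Ratliff-Rush filtration with respect to $I$ behaves well mod the superficial sequence $x_{1},\ldots,x_{d-2}\in I$, both of these inputs are available. The hypothesis $r_{J}(I)<\rho(I)$ forces $\widetilde{I^{r_{J}(I)}}\neq I^{r_{J}(I)}$: indeed, if $\widetilde{I^{r_{J}(I)}}=I^{r_{J}(I)}$ then Proposition \ref{maa} would give $\rho(I)\leq r_{J}(I)$, contradicting the assumption. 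Hence the hypothesis of Theorem \ref{riyi} is satisfied.

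So first I would apply Theorem \ref{riyi} to get
\begin{equation*}
    \rho(I)\leq r_{J}(I)-1+(-1)^{d+1}\bigl(e_{d+1}(I)-\widetilde{e}_{d+1}(I)\bigr).
\end{equation*}
Since $d$ is even, $(-1)^{d+1}=-1$, so this reads $\rho(I)\leq r_{J}(I)-1-e_{d+1}(I)+\widetilde{e}_{d+1}(I)$. Next I would substitute the bound from Lemma \ref{prativa},
\begin{equation*}
    \widetilde{e}_{d+1}(I)\leq\left(\frac{r_{J}(I)-1}{d}\right)\bigl(e_{d}(I)-e_{d-1}(I)+\cdots+e(I)-\lambda(R/I)+\lambda(\widetilde{I}/I)\bigr),
\end{equation*}
which (being an upper bound appearing with a positive sign) preserves the inequality. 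Chaining the two gives exactly the claimed estimate
\begin{equation*}
    \rho(I)\leq r_{J}(I)-1-e_{d+1}(I)+\left(\frac{r_{J}(I)-1}{d}\right)\bigl(e_{d}(I)-e_{d-1}(I)+\cdots+e(I)-\lambda(R/I)+\lambda(\widetilde{I}/I)\bigr).
\end{equation*}

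This is essentially a two-line deduction from the machinery already set up, so there is no real obstacle; the only point requiring a small argument is the observation that $r_{J}(I)<\rho(I)$ rules out Ratliff-Rush closedness of $I^{r_{J}(I)}$ (via Proposition \ref{maa}), which is needed to unlock Theorem \ref{riyi}. One should also double check the sign bookkeeping: the $(-1)^{d+1}$ in Theorem \ref{riyi} must be evaluated at even $d$ to produce the minus sign in front of $e_{d+1}(I)$ and the plus sign in front of the $\widetilde{e}_{d+1}(I)$-term, so that the Lemma \ref{prativa} bound can be inserted in the correct direction.
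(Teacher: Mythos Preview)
Your proposal is correct and follows essentially the same route as the paper: apply Theorem \ref{riyi} (with $d$ even, so the sign is $-1$) and then bound $\widetilde{e}_{d+1}(I)$ via Lemma \ref{prativa}. Your extra remark that $r_{J}(I)<\rho(I)$ forces $\widetilde{I^{r_{J}(I)}}\neq I^{r_{J}(I)}$ by Proposition \ref{maa} is a helpful justification that the paper's proof leaves implicit.
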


\begin{proof}    From Theorem \ref{riyi}, we have $\rho(I) \leq r_{J}(I)-1+\widetilde{e}_{d+1}(I)-e_{d+1}(I).$ Again, from Lemma \ref{prativa}, we have $\widetilde{e}_{d+1}(I) \leq \left(\frac{r_{J}(I)-1}{d}\right)(e_{d}(I)-e_{d-1}(I)+\cdots+e_{0}(I)-\lambda(R/I)+\lambda(\widetilde{I}/I))$. Therefore, $\rho(I) \leq r_{J}(I)-1-e_{d+1}(I)+\left(\frac{r_{J}(I)-1}{d}\right)(e_{d}(I)-e_{d-1}(I)+\cdots+e(I)-\lambda(R/I)+\lambda(\widetilde{I}/I)).$
    \end{proof}

We now establish bounds on the stability index of the Ratliff-Rush filtration, with a particular focus on dimension two.

    \begin{proposition}\label{mushroom}
         Let $(R, \mathfrak m)$ be a two dimensional Cohen-Macaulay local ring, $I$ an $\mathfrak m$-primary ideal and $J \subseteq I$ be a minimal reduction of $I.$ If  $r_{J}(I) < \rho(I),$ then
        \begin{equation*}
            \rho(I) \leq r_{J}(I)-1-e_{3}(I)+\left(\frac{r_{J}(I)-1}{2}\right)(e_{2}(I)-e_{1}(I)+e(I)-\lambda(R/I)+\lambda(\widetilde{I}/I)).
        \end{equation*}
   \end{proposition}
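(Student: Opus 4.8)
The plan is to obtain Proposition~\ref{mushroom} as the special case $d=2$ of the even-dimensional Corollary~\ref{komal}. Indeed, for a two-dimensional Cohen-Macaulay local ring the requirement that the Ratliff-Rush filtration behave well mod a superficial sequence $x_{1},\ldots,x_{d-2}$ is vacuous: when $d=2$ this is an empty sequence, so the hypothesis of Corollary~\ref{komal} is automatically satisfied, and likewise the empty-sequence version of Lemma~\ref{prativa} applies. Substituting $d=2$ into the conclusion of Corollary~\ref{komal} directly yields
\begin{equation*}
    \rho(I) \leq r_{J}(I)-1-e_{3}(I)+\left(\frac{r_{J}(I)-1}{2}\right)(e_{2}(I)-e_{1}(I)+e(I)-\lambda(R/I)+\lambda(\widetilde{I}/I)),
\end{equation*}
which is exactly the asserted bound.

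First I would check that all the auxiliary inputs used in the proofs of Theorem~\ref{riyi}, Lemma~\ref{prativa}, and Corollary~\ref{komal} remain valid in the degenerate case $d=2$ with empty superficial sequence. The key points are: (i) $\widetilde{G}(I)$ has depth $\geq d-1 = 1$ whenever $\depth G(I)$ or the Ratliff-Rush filtration is well-behaved --- but here we need only the formula $\widetilde{e}_{i}(I)=\sum_{n\geq i-1}\binom{n}{i-1}v(n)$ with $v(n)=\lambda(\widetilde{I^{n+1}}/J\widetilde{I^{n}})$, which holds for $d=2$ under the standing hypothesis because $\depth \widetilde{G}(I)\geq 1$ follows automatically (the associated graded ring of the Ratliff-Rush filtration always has positive depth in dimension $\geq 1$ when $\grade I>0$); and (ii) Remark~\ref{21}(iii), which gives $\widetilde{r}_{J}(I)\leq r_{J}(I)$, is needed to pass from $\widetilde{r}$ to $r_{J}(I)$ in the final estimate --- for $d=2$ this reduces to \cite[Proposition 2.1]{ms}, namely $\widetilde{r}_{J}(I)\leq r_{J}(I)$ directly. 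So the chain of implications goes through verbatim.

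Concretely, the argument is: by Theorem~\ref{riyi}, since $r_{J}(I)<\rho(I)$ forces $\widetilde{I^{r_{J}(I)}}\neq I^{r_{J}(I)}$ (by Remark~\ref{shruti}, $\widetilde{I^{n}}\neq I^{n}$ for all $r_{J}(I)\leq n<\rho(I)$, and the range is nonempty), we get $\rho(I)\leq r_{J}(I)-1+\widetilde{e}_{3}(I)-e_{3}(I)$. Then Lemma~\ref{prativa} with $d=2$ gives $\widetilde{e}_{3}(I)\leq \left(\frac{r_{J}(I)-1}{2}\right)(e_{2}(I)-e_{1}(I)+e(I)-\lambda(R/I)+\lambda(\widetilde{I}/I))$. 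Combining the two inequalities yields the claim.

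The main obstacle --- really the only thing to be careful about --- is the degenerate sequence bookkeeping: one must confirm that Lemma~\ref{prativa}'s telescoping identity and the identity $\sum_{j=0}^{d-1}(-1)^{d-1-j}\binom{n}{j}=\binom{n-1}{d-1}$ behave correctly at $d=2$, and that the term ``$e_{d}(I)-e_{d-1}(I)+\cdots+e(I)$'' unambiguously means $e_{2}(I)-e_{1}(I)+e_{0}(I)$ in this case. Since these are routine specializations, I would simply state that Proposition~\ref{mushroom} is the case $d=2$ of Corollary~\ref{komal} and add a one-line remark that the well-behavedness hypothesis is vacuous in dimension two.
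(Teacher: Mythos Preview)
Your proposal is correct and matches the paper's proof exactly: the paper derives Proposition~\ref{mushroom} in one line by specializing Corollary~\ref{komal} to $d=2$, precisely as you do. Your additional care in checking that the well-behavedness hypothesis is vacuous for an empty sequence, that $\depth\widetilde{G}(I)\geq 1$ holds automatically in dimension two, and that $\widetilde{r}_{J}(I)\leq r_{J}(I)$ follows directly from \cite[Proposition~2.1]{ms} without needing Remark~\ref{21}(iii), is all sound (though strictly speaking the implication $r_{J}(I)<\rho(I)\Rightarrow\widetilde{I^{r_{J}(I)}}\neq I^{r_{J}(I)}$ is the contrapositive of Proposition~\ref{maa} rather than a consequence of Remark~\ref{shruti}).
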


   \begin{proof}
       From Corollary \ref{komal}, for $d=2$, we get  $\rho(I) \leq r_{J}(I)-1-e_{3}(I)+\left(\frac{r_{J}(I)-1}{2}\right)(e_{2}(I)-e_{1}(I)+e(I)-\lambda(R/I)+\lambda(\widetilde{I}/I)).$
   \end{proof}

    In the next proposition, we give another bound on $\rho(I)$. Note that the bound given in Proposition \ref{mushroom} depends on the Ratliff-Rush closure of $I$, which can be challenging to compute. In contrast, the following bound is computationally more accessible as it relies solely on the higher Hilbert coefficients. Before proceeding further, it is necessary to introduce the following lemma. We note that the proof follows a similar approach to \cite[Theorem 4.1]{ms}, but we include the steps for completeness.

    \begin{lemma} \label{mainak}
            Let $(R, \mathfrak m)$ be a two dimensional Cohen-Macaulay local ring, $I$ an $\mathfrak m$-primary ideal. Then $\widetilde{e}_{3}(I) \leq (e_{2}(I)-1)e_{2}(I).$ 
    \end{lemma}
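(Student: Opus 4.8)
The target is the bound $\widetilde{e}_3(I)\le (e_2(I)-1)e_2(I)$ for a two-dimensional Cohen-Macaulay local ring. The strategy is to reduce everything to the Ratliff-Rush filtration $\mathcal F=\{\widetilde{I^n}\}_{n\ge 0}$, which in dimension two has $\depth\widetilde G(I)\ge 1$ (since $\widetilde{I^n}=\widetilde{\widetilde{I^n}}$ and a superficial element for $I$ is automatically regular on $\widetilde G(I)$ in dimension $2$; this is the content behind $\rho_x(I)=\rho(I)$ and the standard depth computations in \cite{rv}). Having positive depth means we can pass to the one-dimensional ring $R/(x)$ for a superficial element $x\in J$, and by Remark \ref{21}$(ii)$ the invariant $\widetilde r_J(I)$ is preserved. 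On the associated graded ring $\widetilde G(I)$, which now has positive depth, the formulas $\widetilde e_i(I)=\sum_{n\ge i-1}\binom{n}{i-1}v(n)$ with $v(n)=\lambda(\widetilde{I^{n+1}}/J\widetilde{I^n})$ from \cite[Theorem 2.5]{rv} hold for all $i\ge 1$; in particular $\widetilde e_3(I)=\sum_{n\ge 2}\binom{n}{2}v(n)$ and $\widetilde e_2(I)=\sum_{n\ge 1}nv(n)$, while $\widetilde e_1(I)=\sum_{n\ge 1}v(n)$.

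**Key steps.** First I would fix a superficial element $x\in J$ for $I$ such that the Ratliff-Rush filtration behaves well (in dimension two this is automatic, or follows from $\widetilde G(I)$ having positive depth), set $\bar R=R/(x)$, $\bar I=I/(x)$, and observe $\widetilde e_i(I)=\widetilde e_i(\bar I)$ for the relevant indices and $\widetilde r_J(I)=\widetilde r_{\bar J}(\bar I)=:\widetilde r$. Then $v(n)=0$ for $n\ge\widetilde r$, so all sums are finite, running $1\le n\le\widetilde r-1$ after extending the range harmlessly (the binomial terms vanish for small $n$). Next, the arithmetic heart: I want to bound $\sum_{n}\binom{n}{2}v(n)$ in terms of $\big(\sum_n nv(n)\big)\big(\sum_n nv(n)-1\big)$, or more precisely $(\widetilde e_2-1)\widetilde e_2$. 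The clean way is the elementary inequality $\binom{n}{2}\le \binom{N}{2}$ whenever $n\le N$, combined with the fact that $\widetilde e_2=\sum nv(n)\ge n_0$ where $n_0$ is the largest $n$ with $v(n)\ne 0$ (since each such $n$ contributes at least $n$ to the sum as $v(n)\ge 1$), so $\binom{n}{2}\le\binom{\widetilde e_2}{2}$ for every $n$ in the support; summing and using $\sum v(n)=\widetilde e_1\le\widetilde e_2$ (which holds because $nv(n)\ge v(n)$), or even just $\sum v(n)\le\widetilde e_2$ directly, gives $\widetilde e_3\le\binom{\widetilde e_2}{2}\cdot\text{(something)}$; one then has to be careful to land exactly on $(e_2-1)e_2=2\binom{e_2}{2}$ rather than a weaker multiple. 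Finally, since $\widetilde e_2(I)=e_2(I)$ (Hilbert coefficients $e_i$ agree for $\mathcal F$ and the $I$-adic filtration for $0\le i\le d$), the bound becomes $\widetilde e_3(I)\le(e_2(I)-1)e_2(I)$ as stated.

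**Main obstacle.** The genuinely delicate point is making the combinatorial estimate tight enough to hit the stated quadratic bound with the correct constant — a naive $\binom{n}{2}\le\binom{N}{2}$ bound summed over the support tends to produce an extra factor (roughly $\widetilde e_1$ or $\widetilde e_2$ worth of terms) and one must exploit that the support of $v$ is ``spread out'': the largest index $n_0$ in the support satisfies $n_0\le\widetilde e_2$, and simultaneously $\sum_{n}v(n)$ is controlled. I expect the cleanest route mirrors \cite[Theorem 4.1]{ms}: write $\widetilde e_3=\sum_n\binom{n}{2}v(n)\le\binom{n_0}{2}\sum_n v(n)$, then bound $n_0\le\widetilde e_2$ and $\sum_n v(n)\le\widetilde e_2$, but since $n_0$ already contributes $n_0v(n_0)$ to $\widetilde e_2$ one can shave the sum $\sum_{n<n_0}v(n)\le\widetilde e_2-n_0$, giving $\widetilde e_3\le\binom{n_0}{2}(\widetilde e_2-n_0+1)$, and then one checks that $\binom{t}{2}(e_2-t+1)\le(e_2-1)e_2=2\binom{e_2}{2}$ for all integers $1\le t\le e_2$ — a single-variable polynomial inequality that can be verified by examining its behavior at the endpoints and interior. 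Getting this last inequality to close without slack is where the care is needed; the rest is bookkeeping reduction to dimension one and quoting \cite{rv} and Remark \ref{21}.
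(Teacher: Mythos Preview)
Your strategy --- use $\depth\widetilde G(I)\ge 1$ in dimension two to invoke the formulas $\widetilde e_i(I)=\sum_{n\ge i-1}\binom{n}{i-1}v(n)$ from \cite[Theorem~2.5]{rv} and then estimate combinatorially --- is genuinely different from the paper's proof. The paper argues cohomologically: it uses Blancafort's difference formula $P_{\mathcal F}(n)-H_{\mathcal F}(n)=\lambda\bigl(\mathcal H^2_{\mathcal R_+}(\mathcal R(\mathcal F))_{n+1}\bigr)$ to express $\widetilde e_3(I)$ as the sum of these cohomology lengths, bounds each term by the degree-zero one (which equals $e_2(I)$), and bounds the number of nonzero terms by $a_2(\mathcal R(\mathcal F))\le a_2(\widetilde G(I))=\widetilde r(I)-2\le e_2(I)-1$ via \cite{tjm} and \cite{ms}.

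However, the specific combinatorial step you propose has a genuine gap. Your intermediate bound $\widetilde e_3\le\binom{n_0}{2}(\widetilde e_2-n_0+1)$ is correct, but the polynomial inequality $\binom{t}{2}(e_2-t+1)\le(e_2-1)e_2$ for $1\le t\le e_2$ that you say ``can be verified by examining its behavior at the endpoints and interior'' is \emph{false}: take $e_2=20$ and $t=14$, where the left side is $91\cdot 7=637$ while the right side is $19\cdot 20=380$. The route is easily repaired, though, by a different estimate: bound $\binom{n}{2}=\tfrac{n(n-1)}{2}\le\tfrac{n_0-1}{2}\cdot n$ termwise and sum to obtain $\widetilde e_3\le\tfrac{n_0-1}{2}\,\widetilde e_2$; since $n_0\le n_0\,v(n_0)\le\widetilde e_2=e_2$, this yields $\widetilde e_3\le\tfrac{(e_2-1)e_2}{2}$, in fact a factor of two sharper than the lemma as stated. (Incidentally, the reduction mod $x$ and the appeal to Remark~\ref{21}(ii) are unnecessary detours: the formula from \cite[Theorem~2.5]{rv} requires only $\depth\widetilde G(I)\ge d-1=1$, which is automatic in dimension two.)
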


    \begin{proof} Let $\mathcal{F}:=\{\widetilde{I^{n}}\}_{n \geq 0}$ and $H_{\mathcal{F}}(n):=\lambda(R/\widetilde{I^{n}})$ for all $n \in \mathbb Z$. By the difference formula in \cite[Proposition 4.4]{blancafort}, we have for all $n \geq -1$
    \begin{equation} \label{alok}
        P_{\mathcal{F}}(n)-H_{\mathcal{F}}(n)=\lambda(\mathcal{H}^{2}_{\mathcal{R_{+}}}(\mathcal{R}(\mathcal{F}))_{n+1}).
    \end{equation}
    Now taking sum for large $m$ on both sides of the equation, we get
        \begin{align} \label{ee} \nonumber
            \displaystyle\sum_{n=0}^{m}\lambda(\mathcal{H}^{2}_{\mathcal{R_{+}}}(\mathcal{R}(\mathcal{F}))_{n+1})&= \displaystyle\sum_{n=0}^{m}P_{\mathcal{F}}(n)-\displaystyle\sum_{n=0}^{m}H_{\mathcal{F}}(n)\\ \nonumber
            & = \displaystyle\sum_{n=0}^{m}P_{\mathcal{F}}(n)-H_{\mathcal{F}}^{2}(m)\\ \nonumber
            & = \widetilde{e}_{0}(I)\binom{m+3}{3}-\widetilde{e}_{1}(I)\binom{m+2}{2}+\widetilde{e}_{2}(I)\binom{m+1}{1}-P_{\mathcal{F}}^{2}(m)\\
            &=\widetilde{e}_{3}(I).
        \end{align}
    
        Note that $R$ is a two-dimensional Cohen-Macaulay ring,  by \cite[Lemma 4.7]{blancafort}, we have $$\lambda(\mathcal{H}^{2}_{\mathcal{R_{+}}}(\mathcal{R}(\mathcal{F}))_{n}) \leq \lambda(\mathcal{H}^{2}_{\mathcal{R_{+}}}(\mathcal{R}(\mathcal{F}))_{n-1}) \text{ for all }  n \in \mathbb Z.$$ Now in equation (\ref{alok}), we substitute $n=-1$ to get
        \begin{equation} \label{sl}
            \lambda(\mathcal{H}^{2}_{\mathcal{R_{+}}}(\mathcal{R}(\mathcal{F}))_{0})=\widetilde{e}_{2}(I)=e_{2}(I).
        \end{equation}
        Therefore, combining equations (\ref{ee}) and (\ref{sl}), we get 
        \begin{equation} \label{nochance}
            \widetilde{e}_{3}(I)=\displaystyle\sum_{n=0}^{m}\lambda(\mathcal{H}^{2}_{\mathcal{R_{+}}}(\mathcal{R}(\mathcal{F}))_{n+1}) \leq \displaystyle\sum_{n=0}^{a_{2}(\mathcal{R(\mathcal{F})})-1}\lambda(\mathcal{H}^{2}_{\mathcal{R_{+}}}(\mathcal{R}(\mathcal{F}))_{0})=a_{2}(\mathcal{R(\mathcal{F})})e_{2}(I),
        \end{equation}
        where $a_{2}(\mathcal{R(\mathcal{F})}) \leq a_{2}(\widetilde{G}(I))$ from the proof method of \cite[Theorem 3.1]{trung} for the filtration
$\mathcal{F} := \{ \widetilde{I_{^n}} \}_{n \ge 0}.$  Furthermore, by \cite[Corollary 5.7(2)]{tjm}, we have $a_{2}(\widetilde{G}(I)) = \widetilde{r}(I)-2$ and by \cite[Theorem 2.2]{ms} $\widetilde{r}(I)-1 \leq e_{2}(I).$   Therefore, by equation (\ref{nochance}), we have $\widetilde{e}_{3}(I) \leq (e_{2}(I)-1)e_{2}(I).$ 
    \end{proof}

\begin{proposition} \label{riya}
    Let $(R,\mathfrak m)$ be a  two dimensional Cohen-Macaulay local ring  and $I$ an $\mathfrak m$-primary ideal. For a minimal reduction $J$ of $I$, if $r_{J}(I) < \rho(I),$ then  
    \begin{equation*}
        \rho(I) \leq r_{J}(I)-1+(e_{2}(I)-1)e_{2}(I)-e_{3}(I).
    \end{equation*}
\end{proposition}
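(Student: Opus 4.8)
The plan is to obtain the bound as a direct consequence of two results already in place: Theorem~\ref{riyi}, specialized to $d = 2$, together with the estimate on $\widetilde{e}_{3}(I)$ furnished by Lemma~\ref{mainak}. So almost all the work has already been done; what remains is to check that the hypothesis $r_{J}(I) < \rho(I)$ is exactly what is needed to activate Theorem~\ref{riyi}, and then to assemble the two inequalities.

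First I would argue that $r_{J}(I) < \rho(I)$ forces $\widetilde{I^{r_{J}(I)}} \neq I^{r_{J}(I)}$. This is the contrapositive of Proposition~\ref{maa}: if we had $\widetilde{I^{r_{J}(I)}} = I^{r_{J}(I)}$, then applying Proposition~\ref{maa} with $m = r_{J}(I)$ would give $\rho(I) \leq r_{J}(I)$, contradicting the hypothesis. (This implication is also recorded in Remark~\ref{shruti}.) With $\widetilde{I^{r_{J}(I)}} \neq I^{r_{J}(I)}$ in hand, Theorem~\ref{riyi} applies in dimension $d = 2$, and since $(-1)^{d+1} = (-1)^{3} = -1$ it yields
\begin{equation*}
    \rho(I) \leq r_{J}(I) - 1 - \bigl(e_{3}(I) - \widetilde{e}_{3}(I)\bigr) = r_{J}(I) - 1 + \widetilde{e}_{3}(I) - e_{3}(I).
\end{equation*}
Now Lemma~\ref{mainak} — which needs only that $R$ be a two-dimensional Cohen-Macaulay local ring and $I$ an $\mathfrak m$-primary ideal, with no further hypotheses — supplies $\widetilde{e}_{3}(I) \leq (e_{2}(I) - 1)e_{2}(I)$. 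Substituting this into the display gives $\rho(I) \leq r_{J}(I) - 1 + (e_{2}(I) - 1)e_{2}(I) - e_{3}(I)$, which is the assertion.

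I do not expect any genuine obstacle in the proof of the proposition itself: the two ingredients slot together formally. The real content lies upstream — in the extension of Huneke's result to arbitrary dimension (Proposition~\ref{maa}), in the second-Hilbert-function bookkeeping behind Theorem~\ref{riyi}, and above all in the local-cohomology estimate of Lemma~\ref{mainak}, where $\widetilde{e}_{3}(I)$ is controlled by $e_{2}(I)$ through the bound $\widetilde{r}(I) - 1 \leq e_{2}(I)$. The one point I would be careful to flag is that, unlike Corollary~\ref{komal} and Proposition~\ref{mushroom}, this proposition requires no assumption that the Ratliff-Rush filtration behave well mod a superficial element, precisely because Lemma~\ref{mainak} carries none; the cost of this generality is that the resulting bound is quadratic in $e_{2}(I)$ rather than linear.
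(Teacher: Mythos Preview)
Your proposal is correct and follows essentially the same route as the paper: specialize Theorem~\ref{riyi} to $d=2$ and then feed in the bound $\widetilde{e}_{3}(I)\leq (e_{2}(I)-1)e_{2}(I)$ from Lemma~\ref{mainak}. You are in fact slightly more careful than the paper, which invokes Theorem~\ref{riyi} without explicitly noting that $r_{J}(I)<\rho(I)$ forces $\widetilde{I^{r_{J}(I)}}\neq I^{r_{J}(I)}$ via Proposition~\ref{maa}.
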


\begin{proof}
    From Theorem \ref{riyi}, we have $\rho(I) \leq r_{J}(I)-1+\widetilde{e}_{3}(I)-e_{3}(I).$ Now, from Lemma \ref{mainak}, we have $\widetilde{e}_{3}(I) \leq (e_{2}(I)-1)e_{2}(I).$ Therefore, $\rho(I) \leq  r_{J}(I)-1+(e_{2}(I)-1)e_{2}(I)-e_{3}(I).$
\end{proof}

Note that from  the proof of Theorem \ref{riyi}, we have $e_{3}(I)=\widetilde{e}_{3}(I) - \displaystyle \sum_{n \geq 0}\lambda \left(\widetilde{I^{n+1}}/I^{n+1}\right) \leq \widetilde{e}_{3}(I) $   
and from Lemma \ref{mainak}, we have $\widetilde{e}_{3}(I) \leq (e_{2}(I)-1)e_{2}(I).$ Therefore, $e_{3}(I) \leq (e_{2}(I)-1)e _{2}(I).$ This implies that if $e_{2}(I) \leq 1$, then $e_{3}(I) \leq 0,$ which is  similar to a relation between  $e_{2}(I)$ and $e_{3}(I)$ that was proved in \cite[Proposition 6.4]{tjp2} and \cite[Proposition 2.6]{mafi2}, in a three-dimensional Cohen-Macaulay ring.
In the next  corollary, we observe that if $e_{3}(I)$ attains its upper bound, then $\rho(I)$ is always less than $r_{J}(I)$ for every minimal reduction $J$ of $I.$

  \begin{corollary}
          Let $(R, \mathfrak m)$ be a two dimensional Cohen-Macaulay local ring, $I$ an $\mathfrak m$-primary ideal. If $e_{3}(I)= (e_{2}(I)-1)e_{2}(I),$ then $\rho(I) \leq r_{J}(I)$ for every minimal reduction $J$ of $I.$
    \end{corollary}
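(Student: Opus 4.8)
The plan is to run a proof by contradiction built directly on top of Proposition \ref{riya}. Assume, for contradiction, that $\rho(I) > r_J(I)$ for some minimal reduction $J$ of $I$; equivalently, by Remark \ref{shruti}, that $\widetilde{I^{r_J(I)}} \neq I^{r_J(I)}$. Then the hypotheses of Proposition \ref{riya} are met, so we obtain the bound
\begin{equation*}
    \rho(I) \leq r_J(I) - 1 + (e_2(I)-1)e_2(I) - e_3(I).
\end{equation*}
The whole point of the corollary's hypothesis is that it forces the last two terms to cancel: if $e_3(I) = (e_2(I)-1)e_2(I)$, then $(e_2(I)-1)e_2(I) - e_3(I) = 0$, and the displayed inequality collapses to $\rho(I) \leq r_J(I) - 1$, which contradicts $\rho(I) > r_J(I)$. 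Hence no minimal reduction can satisfy $\rho(I) > r_J(I)$, i.e. $\rho(I) \leq r_J(I)$ for every minimal reduction $J$ of $I$.

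Alternatively, and perhaps more cleanly, one avoids the contradiction framing entirely by a direct dichotomy. For a fixed minimal reduction $J$, either $\widetilde{I^{r_J(I)}} = I^{r_J(I)}$, in which case Proposition \ref{maa} (with $m = r_J(I)$) gives $\rho(I) \leq r_J(I)$ immediately; or $\widetilde{I^{r_J(I)}} \neq I^{r_J(I)}$, in which case Remark \ref{shruti} tells us $r_J(I) < \rho(I)$, so Proposition \ref{riya} applies and, under the hypothesis $e_3(I) = (e_2(I)-1)e_2(I)$, yields $\rho(I) \leq r_J(I) - 1 < r_J(I)$ --- but this contradicts $r_J(I) < \rho(I)$. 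So the second case cannot occur, and we are always in the first case, giving $\rho(I) \leq r_J(I)$.

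There is no real obstacle here: the corollary is a formal consequence of the already-established Proposition \ref{riya} together with the trivial observation that the quadratic expression $(e_2(I)-1)e_2(I) - e_3(I)$ vanishes precisely when $e_3(I)$ hits its upper bound. The only point requiring the slightest care is making sure the hypothesis of Proposition \ref{riya} --- namely $r_J(I) < \rho(I)$ --- is actually in force before invoking it; this is handled by the dichotomy above, since if that hypothesis fails then the desired conclusion $\rho(I) \leq r_J(I)$ holds outright. I would write the short version: assume $r_J(I) < \rho(I)$, apply Proposition \ref{riya}, substitute the hypothesis to get $\rho(I) \leq r_J(I) - 1$, observe the contradiction, and conclude.
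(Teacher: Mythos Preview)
Your proposal is correct and matches the paper's proof essentially verbatim: assume $r_J(I) < \rho(I)$ for some minimal reduction $J$, invoke Proposition~\ref{riya} to get $\rho(I) \leq r_J(I) - 1$ after substituting the hypothesis, and derive a contradiction. The paper's argument is exactly your short version.
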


    \begin{proof}
  Suppose for a minimal reduction $J$ of $I$, $r_{J}(I) < \rho(I),$ then by Proposition \ref{riya}, we have $\rho(I) \leq r_{J}(I)-1,$ a contradiction. Therefore, if $e_{3}(I)= (e_{2}(I)-1)e_{2}(I),$ then  $\rho(I) \leq r_{J}(I)$ for every minimal reduction $J$ of $I.$ 
 \end{proof}

The following example illustrates the above propositions.

\begin{example}  \cite[Example 4.6]{rtt}
    Let $R=\mathbb Q[[x,y]]$ and $I=(x^7,x^6y,x^2y^5,y^7)$. By CoCoA,  the Hilbert series is \begin{equation*}
        h_{I}(t)=\frac{35+6t+7t^2+2t^3-t^5}{(1-t)^2}.
    \end{equation*}
    
Then $e(I)=49$, $e_{1}(I)=21$, $e_{2}(I)=3$ and $e_{3}(I)=-8$. By \cite[Theorem 2.1]{elias}, we have $\widetilde{I}=(x^7,x^6y,x^4y^3,x^2y^5,y^7)$ and $\lambda(\widetilde{I}/I)=4.$ Here $J=(x^7, x^6y+y^7)$ is a minimal reduction of $I$ with $r_{J}(I)=3$. Using Macaulay 2, we get $x^{17}y^{4}\in (I^{4}:I) \subseteq \widetilde{I^{3}}$ but $x^{17}y^{4} \notin I^{3}$, therefore, $\widetilde{I^{3}} \neq I^{3}$.  Thus, from Proposition \ref{mushroom}, we get $\rho(I) \leq r_{J}(I)-1-e_{3}(I)+\left(\frac{r_{J}(I)-1}{2}\right)(e_{2}(I)-e_{1}(I)+e(I)-\lambda(R/I)+\lambda(\widetilde{I}/I))=10$. Further, from Proposition \ref{riya}, we get $\rho(I)\leq r_{J}(I)-1+(e_{2}(I)-1)e_{2}(I)-e_{3}(I)=16$.
\end{example}

In the next corollary, we establish a bound on $\rho(I)$ in terms of Hilbert coefficients using Rossi's bound on the reduction number.

 \begin{corollary}\label{swara}
        Let $(R,\mathfrak m)$ be a two dimensional Cohen-Macaulay local ring and $I$ an $\mathfrak m$-primary ideal.  For a minimal reduction $J$ of $I$, if $r_{J}(I) < \rho(I),$ then  
    \begin{equation*}
        \rho(I) \leq e_{1}(I)-e_{0}(I)+\lambda(R/I)+(e_{2}(I)-1)e_{2}(I)-e_{3}(I).
    \end{equation*}
     \end{corollary}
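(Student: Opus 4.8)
The plan is to combine Proposition \ref{riya} with Rossi's bound on the reduction number in dimension two. Recall that Proposition \ref{riya} gives, under the hypothesis $r_{J}(I) < \rho(I)$,
\begin{equation*}
    \rho(I) \leq r_{J}(I) - 1 + (e_{2}(I)-1)e_{2}(I) - e_{3}(I).
\end{equation*}
So it suffices to replace $r_{J}(I)-1$ by a quantity depending only on the lower Hilbert coefficients. The natural tool here is Rossi's result \cite[Corollary 1.5]{rossi2}, which for a Cohen-Macaulay local ring of dimension at most two and $J$ a minimal reduction of the $\mathfrak m$-primary ideal $I$ asserts that $r_{J}(I) \leq e_{1}(I) - e_{0}(I) + \lambda(R/I) + 1$. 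Substituting this inequality into the bound from Proposition \ref{riya} yields
\begin{equation*}
    \rho(I) \leq \big(e_{1}(I) - e_{0}(I) + \lambda(R/I) + 1\big) - 1 + (e_{2}(I)-1)e_{2}(I) - e_{3}(I),
\end{equation*}
and the two ``$+1$'' and ``$-1$'' cancel, giving exactly the claimed inequality.

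There is essentially no obstacle here: this is a one-line consequence. The only point requiring a small amount of care is to make sure the hypotheses line up. Proposition \ref{riya} requires $r_{J}(I) < \rho(I)$, which is precisely the hypothesis of the corollary; Rossi's bound requires only that $(R,\mathfrak m)$ be Cohen-Macaulay of dimension $\le 2$ with infinite residue field (the standing assumption in Section 2) and $J$ a minimal reduction of $I$, which again holds. One should also note that $r_{J}(I)$ itself drops out of the final expression, so the bound is uniform over all minimal reductions $J$ of $I$ — in particular it holds with $r(I)$ in place of $r_{J}(I)$ in the hypothesis, though we only need the stated form. I would write the proof as: invoke Proposition \ref{riya}, invoke Rossi's bound, substitute, simplify.
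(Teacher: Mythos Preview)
Your proposal is correct and follows exactly the same approach as the paper: invoke Rossi's bound $r_{J}(I) \leq e_{1}(I) - e_{0}(I) + \lambda(R/I) + 1$ from \cite[Corollary 1.5]{rossi2} and substitute into Proposition \ref{riya}.
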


\begin{proof}
    From \cite[Corollary 1.5]{rossi2} we have, $r_{J}(I) \leq e_{1}(I)-e_{0}(I)+\lambda(R/I)+1.$ Thus,  the conclusion follows from Proposition \ref{riya}.
\end{proof}

In \cite[Question 3.14]{cmn}, Miranda-Neto and Queiroz proposed the following question: In a two-dimensional Buchsbaum ring $(R,\mathfrak m)$ with positive depth, is it true that $\rho(I) \leq r_{J}(I)+1,$ for an $\mathfrak m$-primary ideal $I$ with minimal reduction $J?$
In the next proposition, we  give a class of $\mathfrak m$-primary ideals for which $\rho(I)$ is at most $r_{J}(I)+1.$ This gives a partial answer to the above question for Cohen-Macaulay local rings. 

\begin{proposition} \label{barkhanda}
    Let $(R, \mathfrak m)$ be a two dimensional Cohen-Macaulay local ring, $I$ an $\mathfrak m$-primary ideal such that $e_{2}(I)=0$ or $1$ and $e_{3}(I)=-1.$ Then for any minimal reduction $J \subseteq I$, 
    \begin{equation*}
        \rho(I) \leq r_{J}(I)+1.
    \end{equation*}
\end{proposition}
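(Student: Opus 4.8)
The plan is to reduce to the two previously established facts: Proposition~\ref{riya}, which gives $\rho(I) \leq r_J(I) - 1 + (e_2(I)-1)e_2(I) - e_3(I)$ whenever $r_J(I) < \rho(I)$, and the elementary relation $e_3(I) \leq (e_2(I)-1)e_2(I)$ derived in the remark following Proposition~\ref{riya}. First I would dispose of the easy case: if $r_J(I) \geq \rho(I)$, then certainly $\rho(I) \leq r_J(I) \leq r_J(I)+1$ and there is nothing to prove. So assume $r_J(I) < \rho(I)$ and apply Proposition~\ref{riya}.

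Next I would plug in the numerical hypothesis. With $e_2(I) \in \{0,1\}$ we have $(e_2(I)-1)e_2(I) = 0$ in both cases (when $e_2(I)=0$ the product is $0$, and when $e_2(I)=1$ the factor $e_2(I)-1$ is $0$). Hence Proposition~\ref{riya} yields
\begin{equation*}
\rho(I) \leq r_J(I) - 1 + 0 - e_3(I) = r_J(I) - 1 - e_3(I).
\end{equation*}
Now substitute $e_3(I) = -1$ to get $\rho(I) \leq r_J(I) - 1 + 1 = r_J(I)$, which is even sharper than the claimed bound $r_J(I)+1$. (In fact this shows the hypothesis forces $\rho(I) \leq r_J(I)$, so the stated inequality holds a fortiori; I would present it this way, noting the stronger conclusion, or simply weaken at the last step to match the statement.)

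I do not anticipate any real obstacle here — the proposition is an immediate corollary of Proposition~\ref{riya} once one observes that $e_2(I) \in \{0,1\}$ kills the quadratic term. The only point requiring a word of care is the logical case split on whether $r_J(I) < \rho(I)$, since Proposition~\ref{riya} is stated under exactly that hypothesis; handling the complementary case trivially completes the argument. One could also remark that the values $e_2(I) \in \{0,1\}$ and $e_3(I) = -1$ are mutually consistent (e.g.\ this is the ``border'' case where $e_3(I)$ falls just short of its upper bound $(e_2(I)-1)e_2(I) = 0$), which motivates why the resulting bound on $\rho(I)$ is so tight.
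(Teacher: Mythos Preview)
Your argument is correct and takes a genuinely different route from the paper. The paper does not invoke Proposition~\ref{riya} at all; instead it proceeds structurally: from $e_2(I)\in\{0,1\}$ and Rossi--Valla's formula $\widetilde{e}_i(I)=\sum_{n\ge i-1}\binom{n}{i-1}\lambda(\widetilde{I^{n+1}}/J\widetilde{I^n})$ it deduces $\widetilde{I^{n+1}}=J\widetilde{I^n}$ for all $n\ge 2$ and hence $\widetilde{e}_3(I)=0$, so that $\sum_{n\ge 1}\lambda(\widetilde{I^n}/I^n)=\widetilde{e}_3(I)-e_3(I)=1$. This pins down that \emph{exactly one} power---namely $I^{\rho(I)-1}$, with length discrepancy~$1$---fails to be Ratliff--Rush closed, while $\widetilde{I^m}=I^m$ for $1\le m\le\rho(I)-2$. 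Now if $r_J(I)<\rho(I)-1$ then $\widetilde{I^{r_J(I)}}=I^{r_J(I)}$, and Proposition~\ref{maa} forces $\widetilde{I^n}=I^n$ for all $n\ge r_J(I)$, contradicting $\widetilde{I^{\rho(I)-1}}\neq I^{\rho(I)-1}$. Your approach is considerably shorter---it is the same mechanism as in the corollary immediately following Proposition~\ref{riya}---and, taking Proposition~\ref{riya} at face value, it in fact delivers the stronger bound $\rho(I)\le r_J(I)$ (the case $r_J(I)<\rho(I)$ being self-contradictory). What the paper's longer argument buys is the structural information about \emph{which} power fails and \emph{by how much}, something your purely numerical application of Proposition~\ref{riya} does not reveal.
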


\begin{proof}
    Let $e_{2}(I)=0$ or $1.$ From \cite[Theorem 2.5]{rv}, we have $\widetilde{e_{2}}(I)= \displaystyle \sum_{n \geq 1}n \lambda \left(\widetilde{I^{n+1}}/J\widetilde{I^{n}}\right).$ Also, $\widetilde{e_{2}}(I)=e_{2}(I)=0$ or $1$, therefore,   $\widetilde{I^{n+1}} = J\widetilde{I^{n}}$ for all $n \geq 2.$  Again from \cite[Theorem 2.5]{rv}, we have $\widetilde{e_{3}}(I)= \displaystyle \sum_{n \geq 2} \binom{n}{2} \lambda \left(\widetilde{I^{n+1}}/J\widetilde{I^{n}}\right),$ thus $\widetilde{e_{3}}(I)=0,$ this implies  $e_{3}(I)=-\displaystyle\sum_{n\geq 1}\lambda(\widetilde{I^{n}}/I^{n}).$ Since $e_{3}(I)=-1$, therefore, $\displaystyle \sum_{n\geq 1}\lambda(\widetilde{I^{n}}/I^{n})=1.$ Note that $\widetilde{I^{n}}=I^{n}$ for all $n \geq \rho(I)$ and $\widetilde{I^{\rho(I)-1}}\neq I^{\rho(I)-1}$.  Thus, $\displaystyle \sum_{n=1}^{n=\rho(I)-1}\lambda(\widetilde{I^{n}}/I^{n})=1,$ this implies $\widetilde{I^{m}}=I^{m}$ for all $1 \leq m \leq \rho(I)-2.$  Suppose $r_{J}(I) < \rho(I)-1,$ then $\widetilde{I^{r_{J}(I)}}=I^{r_{J}(I)},$ thus by \cite[Proposition 4.3]{hjls} $\widetilde{I^{n}}=I^{n}$ for all $n \geq r_{J}(I)$, which implies $\widetilde{I^{\rho(I)-1}}=I^{\rho(I)-1},$  a contradiction. Therefore, $\rho(I) \leq r_{J}(I)+1.$ 
\end{proof}

The following example illustrates the above proposition. 

\begin{example} \cite[Example 3.3]{rv}
     Let $R=\mathbb Q[[x,y]]$ and $I=(x^4,x^3y,xy^3,y^4).$ By CoCoA, the Hilbert series is  \begin{equation*}
         h_{I}(t)=\frac{11 + 3t + 3t^2 - t^3}{(1-t)^2}.
     \end{equation*}
     
   Then $e_{2}(I)=0$ and $e_{3}(I)=-1$. Note that $J=(x^4,y^4)$ is a minimal reduction of $I$ and $r_{J}(I)=2$. Therefore, from \cite[Proposition 4.3]{hjls}, we have $\widetilde{I^{2}}=I^2$, hence, $\rho(I)=2.$ Thus,  from Proposition \ref{barkhanda}, we have $\rho(I) \leq r_{J}(I)+1=3.$  
\end{example}

In the next proposition, we give another bound on $\rho(I)$ without  any assumptions on the Hilbert coefficients.

\begin{proposition} \label{shoni}
    Let $(R, \mathfrak m)$ be a two dimensional Cohen-Macaulay local ring, and $I$ an $\mathfrak m$-primary ideal with $\depth G(I^t)>0$ for some $t> 1.$ Let $J$ be a minimal reduction of $I$ such that $\widetilde{I^{r_{J}(I)}} \neq I^{r_{J}(I)}$.  If $\rho(I) \equiv 0 \mod t$ then 
   $\rho(I) \leq r_{J}(I)-1+t$.
   Furthermore, if $\rho(I) \equiv k \mod t$ for $1 \leq k \leq t-1,$ then
     $\rho(I) \leq r_{J}(I)+k$.
\end{proposition}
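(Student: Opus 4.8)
The plan is to exploit the hypothesis $\depth G(I^t) > 0$ to transfer information from the $I^t$-adic filtration back to the $I$-adic one. First I would recall the basic mechanism: when $\depth G(I^t) > 0$, every power of $I^t$ is Ratliff-Rush closed, i.e. $\widetilde{(I^t)^n} = (I^t)^n = I^{tn}$ for all $n \geq 1$. Hence for every multiple $m$ of $t$ we automatically have $\widetilde{I^m} = I^m$. In particular, taking $m$ the largest multiple of $t$ that is $< \rho(I)$, we would get $\widetilde{I^m} = I^m$ with $m \geq \rho(I) - t$ (in the case $\rho(I) \equiv 0 \bmod t$ the relevant multiple is $\rho(I)$ itself, but we want one strictly below).

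Next I would feed this into Proposition \ref{maa}: if $\widetilde{I^m} = I^m$ for some $m \geq r_J(I)$, then $\rho(I) \leq m$. So the argument splits according to whether the multiple of $t$ just below $\rho(I)$ is $\geq r_J(I)$ or not. If $\rho(I) \equiv 0 \bmod t$, write $\rho(I) = qt$; consider $m = \rho(I) - t = (q-1)t$, a multiple of $t$, so $\widetilde{I^m} = I^m$. If $m \geq r_J(I)$, Proposition \ref{maa} would give $\rho(I) \leq m = \rho(I) - t$, which is absurd; hence $m < r_J(I)$, i.e. $\rho(I) - t < r_J(I)$, i.e. $\rho(I) \leq r_J(I) - 1 + t$. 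For the case $\rho(I) \equiv k \bmod t$ with $1 \leq k \leq t-1$, write $\rho(I) = qt + k$; take $m = qt = \rho(I) - k$, again a multiple of $t$ so $\widetilde{I^m} = I^m$. The same dichotomy via Proposition \ref{maa}: $m \geq r_J(I)$ would force $\rho(I) \leq m = \rho(I) - k < \rho(I)$, a contradiction, so $m < r_J(I)$, giving $\rho(I) - k < r_J(I)$ and thus $\rho(I) \leq r_J(I) - 1 + k \leq r_J(I) + k$. One should double-check the edge case where the relevant multiple $m$ is $0$ or negative; since $\widetilde{I^{r_J(I)}} \neq I^{r_J(I)}$ we know $\rho(I) > r_J(I) \geq 1$, and in fact one can just note that if no positive multiple of $t$ lies in the range the bound $\rho(I) \leq t$ or $\rho(I) \leq k+1$ is even stronger, so the stated inequality still holds trivially.

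The main point to justify carefully is the claim that $\depth G(I^t) > 0$ implies $\widetilde{I^{tn}} = I^{tn}$ for all $n$. This is the standard fact that positive depth of the associated graded ring of a filtration forces each term of that filtration to be Ratliff-Rush closed, applied to the $I^t$-adic filtration: $\widetilde{(I^t)^n} = (I^t)^n$ because a superficial element for $I^t$ is a nonzerodivisor on $G(I^t)$, and then $(I^{t(n+1)} : y) = I^{tn}$ for all $n$ where $y$ is such an element, which exactly says $\widetilde{(I^t)^n} = (I^t)^n$. Since $(I^t)^n = I^{tn}$, this gives $\widetilde{I^{tn}} = I^{tn}$. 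I expect this step — pinning down the correct reference or giving the short argument that the Ratliff-Rush closure of $I^{tn}$ computed via the $I$-adic chain agrees with that computed via the $I^t$-adic chain, i.e. $\bigcup_k (I^{tn+k} : I^k) $ versus $\bigcup_k (I^{t(n+k)} : I^{tk})$ — to be the only genuinely delicate part; everything else is a short counting argument combined with Proposition \ref{maa}.
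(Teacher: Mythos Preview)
Your proof is correct and follows essentially the same approach as the paper: use that $\depth G(I^t) > 0$ forces $\widetilde{I^{tn}} = I^{tn}$ for every $n$, then apply Proposition~\ref{maa} (the paper phrases it via the contrapositive, Remark~\ref{shruti}) to conclude that the largest multiple of $t$ strictly below $\rho(I)$ must lie strictly below $r_J(I)$. Your handling of the second case in fact yields the slightly sharper inequality $\rho(I) \leq r_J(I) + k - 1$, which the paper's own argument would also give but which is not stated there.
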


\begin{proof}
Suppose $\rho(I)=mt$ for some $m > 1.$ Then by Remark \ref{shruti}, $(m-1)t< r_{J}(I)<mt,$ this implies $mt-t <r_{J}(I).$ Thus, $\rho(I) < r_{J}(I)+t.$  Therefore, $\rho(I) \leq r_{J}(I)-1+t$.

  Now, suppose $\rho(I) = mt+k$ for some $m \geq 1.$ We claim that $\rho(I) \leq r_{J}(I)+k.$ Suppose $r_{J}(I) < \rho(I)-k=mt$, then $r_{J}(I) < mt < \rho(I)$. Therefore, by Remark \ref{shruti}, $\widetilde{I^{mt}} \neq I^{mt},$ but $\depth G(I^t)>0 $ which implies $\widetilde{I^{mt}} = I^{mt}$ for all $m$. This is a contradiction. Therefore,
         $\rho(I) \leq r_{J}(I)+k$.
   \end{proof}

As a immediate corollary we get 

\begin{corollary}\label{priya}
    Let $(R, \mathfrak m)$ be a two dimensional Cohen-Macaulay local ring, and $I$ an $\mathfrak m$-primary ideal.  Let $J$ be a minimal reduction of $I.$ Suppose $\depth G(I^2)>0$  and $r_{J}(I)$ is odd.  If $\widetilde{I^{r_{J}(I)}} \neq I^{r_{J}(I)}$ then 
$\rho(I) = r_{J}(I)+1.$
\end{corollary}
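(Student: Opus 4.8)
The plan is to derive everything from Proposition \ref{shoni} applied with $t=2$; the only thing that genuinely needs checking is the residue of $\rho(I)$ modulo $2$, after which the statement drops out by combining the resulting upper bound with the lower bound $\rho(I)>r_{J}(I)$.

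First I would record that $\depth G(I^{2})>0$ forces every even power of $I$ to be Ratliff-Rush closed. Indeed, $\depth G(I^{2})\ge 1$ means, by the fact noted in Remark \ref{shruti} applied to the $\mathfrak m$-primary ideal $I^{2}$, that $\widetilde{(I^{2})^{n}}=(I^{2})^{n}$ for all $n\ge 1$; since $(I^{2})^{n}=I^{2n}$ and the Ratliff-Rush closure of an ideal depends only on the ideal, this says $\widetilde{I^{2n}}=I^{2n}$ for all $n\ge 1$.

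Next I would locate $\rho(I)$ relative to $r_{J}(I)$. From the hypothesis $\widetilde{I^{r_{J}(I)}}\ne I^{r_{J}(I)}$ and Remark \ref{shruti} we get $r_{J}(I)<\rho(I)$, hence $\rho(I)\ge r_{J}(I)+1\ge 2$; in particular $\rho(I)\ge 2$, so by the very definition of the Ratliff-Rush index $\widetilde{I^{\rho(I)-1}}\ne I^{\rho(I)-1}$. By the first step this is impossible when $\rho(I)-1$ is even, so $\rho(I)$ must be even, i.e. $\rho(I)\equiv 0\pmod 2$. (This is also where the stated hypothesis ``$r_{J}(I)$ odd'' fits in: it is precisely the statement that $r_{J}(I)=\rho(I)-1$ is odd, which we have just re-derived; equivalently, were $r_{J}(I)$ even the hypothesis $\widetilde{I^{r_{J}(I)}}\ne I^{r_{J}(I)}$ could not hold.)

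Finally I would invoke Proposition \ref{shoni} with $t=2$ in the case $\rho(I)\equiv 0\pmod 2$, obtaining $\rho(I)\le r_{J}(I)-1+2=r_{J}(I)+1$. Together with the lower bound $\rho(I)\ge r_{J}(I)+1$ from the previous paragraph, this yields $\rho(I)=r_{J}(I)+1$. There is no real obstacle here; the only points requiring a little care are the identification $\widetilde{I^{2n}}=I^{2n}$ from $\depth G(I^{2})>0$ and ensuring $\rho(I)\ge 2$ so that the assertion $\widetilde{I^{\rho(I)-1}}\ne I^{\rho(I)-1}$ is legitimate.
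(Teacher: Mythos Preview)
Your proof is correct and follows essentially the same route as the paper: establish that $\rho(I)$ is even (the paper states this tersely as ``by Remark \ref{shruti}, $\rho(I)=2m$''; you spell out the reason via $\widetilde{I^{\rho(I)-1}}\ne I^{\rho(I)-1}$ and $\widetilde{I^{2n}}=I^{2n}$), apply Proposition \ref{shoni} with $t=2$ to get $\rho(I)\le r_{J}(I)+1$, and combine with $\rho(I)>r_{J}(I)$. One small expository slip: in your parenthetical you write ``$r_{J}(I)=\rho(I)-1$'' before that equality has been established---but your subsequent observation that the oddness of $r_{J}(I)$ is forced by the other hypotheses is correct and is a nice remark.
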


\begin{proof}
   Note that by Remark \ref{shruti}, $\rho(I)=2m$ for some $m>1.$ By Proposition \ref{shoni}, for $t=2,$ $\rho(I) \leq r_{J}(I)+1.$ Since $\widetilde{I^{r_{J}(I)}} \neq I^{r_{J}(I)},$ therefore, $r_{J}(I) < \rho(I) \leq r_{J}(I)+1,$ which implies $\rho(I)=r_{J}(I)+1.$ 
\end{proof}

In \cite{ankit}, the authors proved that if $R$ is Cohen-Macaulay with $e_{2}(\mathfrak m)=e_{1}(\mathfrak m)-e(\mathfrak m)+1\neq0$, then $\type(R) \geq e(\mathfrak m)-\mu(\mathfrak m)+d-1$ where $\type(R)=\dim_{k} \Ext_{R}^{d}(k,R)$ and $\mu(\mathfrak m)= \lambda(\mathfrak m/\mathfrak m^{2}).$ Further, if $e_{2}(\mathfrak m)=e_{1}(\mathfrak m)-e(\mathfrak m)+1\neq0$ and $\type(R) = e(\mathfrak m)-\mu(\mathfrak m)+d-1$, then $G(\mathfrak m)$ is Cohen-Macaulay. In the next corollary, we consider the next boundary case i.e., $\type(R) = e(\mathfrak m)-\mu(\mathfrak m)+d.$

\begin{corollary}
    Let $(R,\mathfrak m)$ be a two dimensional Cohen-Macaulay local ring with $e_{2}(\mathfrak m)=e_{1}(\mathfrak m)-e(\mathfrak m)+1\neq0$ and $\type R=e(\mathfrak m)-\mu(\mathfrak m)+2.$ Suppose $J$ is a minimal reduction of $\mathfrak m$ such that $\widetilde{\mathfrak m^{r_{J}(I)}} \neq \mathfrak m^{r_{J}(I)},$ then $\rho(\mathfrak m) \leq r_{J}(\mathfrak m)+3.$
\end{corollary}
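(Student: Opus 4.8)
The plan is to apply Proposition \ref{shoni} to the ideal $\mathfrak m$ with Veronese exponent $t=4$. Note first that the hypothesis $\widetilde{\mathfrak m^{r_{J}(\mathfrak m)}}\neq\mathfrak m^{r_{J}(\mathfrak m)}$ forces $\depth G(\mathfrak m)=0$: indeed, if $\depth G(\mathfrak m)\geq 1$ then $\widetilde{\mathfrak m^{n}}=\mathfrak m^{n}$ for all $n\geq 1$ by Remark \ref{shruti}, a contradiction. Hence the elementary inequality $\rho(\mathfrak m)\leq r_{J}(\mathfrak m)$ is not available, and we must pass to a power of $\mathfrak m$.

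The decisive step is to establish $\depth G(\mathfrak m^{4})>0$; this is where the hypotheses on $\type R$ and on $e_{2}(\mathfrak m)$ are used. In the first boundary case $\type R=e(\mathfrak m)-\mu(\mathfrak m)+d-1$ the ring $G(\mathfrak m)$ is Cohen-Macaulay by \cite{ankit}, which in particular bounds $r(\mathfrak m)$. I would push the same Hilbert-function analysis of \cite{ankit} one step further in the present case $\type R=e(\mathfrak m)-\mu(\mathfrak m)+d$ to obtain $r(\mathfrak m)\leq 4$. Granting this, choose a minimal reduction $J_{0}=(a_{1},a_{2})$ of $\mathfrak m$ with $r_{J_{0}}(\mathfrak m)\leq 4$; then $\mathfrak m^{4(n+1)}=J_{0}^{4}\mathfrak m^{4n}$ for all $n\geq 1$, and a routine computation inside the fourth Veronese subalgebra shows that $(a_{1}^{4},a_{2}^{4})$ is a minimal reduction of $\mathfrak m^{4}$ of reduction number at most $1$. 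Since an $\mathfrak m$-primary ideal whose reduction number with respect to some minimal reduction is at most $1$ has Cohen-Macaulay associated graded ring, $G(\mathfrak m^{4})$ is Cohen-Macaulay, so in particular $\depth G(\mathfrak m^{4})>0$.

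With $\depth G(\mathfrak m^{4})>0$ in hand, Proposition \ref{shoni} applies to $I=\mathfrak m$ with $t=4$: the remaining hypotheses hold, namely $4>1$, $J$ is a minimal reduction of $\mathfrak m$, and $\widetilde{\mathfrak m^{r_{J}(\mathfrak m)}}\neq\mathfrak m^{r_{J}(\mathfrak m)}$. If $\rho(\mathfrak m)\equiv 0\pmod 4$, the first assertion gives $\rho(\mathfrak m)\leq r_{J}(\mathfrak m)-1+4=r_{J}(\mathfrak m)+3$; if $\rho(\mathfrak m)\equiv k\pmod 4$ with $1\leq k\leq 3$, the second assertion gives $\rho(\mathfrak m)\leq r_{J}(\mathfrak m)+k\leq r_{J}(\mathfrak m)+3$. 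In every case $\rho(\mathfrak m)\leq r_{J}(\mathfrak m)+3$, which is the assertion.

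The main obstacle lies entirely in the second paragraph: extracting a numerical bound such as $r(\mathfrak m)\leq 4$, equivalently $\depth G(\mathfrak m^{4})>0$, from the almost-minimal-type hypothesis $\type R=e(\mathfrak m)-\mu(\mathfrak m)+d$ together with $e_{2}(\mathfrak m)=e_{1}(\mathfrak m)-e(\mathfrak m)+1\neq 0$; this requires carrying the delicate Hilbert-function bookkeeping of \cite{ankit} one step past the Cohen-Macaulay boundary case. Once that bound is available, the passage to the fourth Veronese (where the only subtlety is that $J^{4}$ is not itself a minimal reduction of $\mathfrak m^{4}$, so one works with the fourth powers of a minimal generating set of $J$) and the four residue-class split in Proposition \ref{shoni} are routine.
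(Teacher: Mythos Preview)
Your overall plan (reduce to Proposition \ref{shoni} via positive depth of some Veronese $G(\mathfrak m^{t})$) is the right idea, but the second paragraph contains two real gaps. First, what \cite[Theorem 4.1]{ankit} actually delivers under the hypotheses $e_{2}(\mathfrak m)=e_{1}(\mathfrak m)-e(\mathfrak m)+1\neq 0$ and $\type R=e(\mathfrak m)-\mu(\mathfrak m)+2$ is not a reduction-number bound but the Ratliff--Rush statement $\widetilde{\mathfrak m^{i}}=\mathfrak m^{i}$ for all $i\geq 3$; there is no need to ``push the bookkeeping one step further,'' and aiming for $r(\mathfrak m)\leq 4$ is the wrong target. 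Second, even granting $r_{J_{0}}(\mathfrak m)\leq 4$, your Veronese step fails: from $\mathfrak m^{8}=J_{0}^{4}\mathfrak m^{4}=(a_{1},a_{2})^{4}\mathfrak m^{4}$ you cannot conclude $\mathfrak m^{8}=(a_{1}^{4},a_{2}^{4})\mathfrak m^{4}$, because the mixed monomials $a_{1}^{i}a_{2}^{4-i}$ need not lie in $(a_{1}^{4},a_{2}^{4})+\mathfrak m^{5}$ when $r_{J_{0}}(\mathfrak m)=4$. So neither the claimed bound $r_{(a_{1}^{4},a_{2}^{4})}(\mathfrak m^{4})\leq 1$ nor, consequently, the Cohen--Macaulayness of $G(\mathfrak m^{4})$ is established.

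The paper's proof bypasses both issues. From \cite[Theorem 4.1]{ankit} one has $\widetilde{\mathfrak m^{i}}=\mathfrak m^{i}$ for all $i\geq 3$, hence $\widetilde{(\mathfrak m^{3})^{n}}=(\mathfrak m^{3})^{n}$ for all $n\geq 1$, which is exactly $\depth G(\mathfrak m^{3})>0$. One then applies Proposition \ref{shoni} with $t=3$ (not $t=4$): if $\rho(\mathfrak m)\equiv 0\pmod 3$ then $\rho(\mathfrak m)\leq r_{J}(\mathfrak m)+2$, and if $\rho(\mathfrak m)\equiv k\pmod 3$ with $k\in\{1,2\}$ then $\rho(\mathfrak m)\leq r_{J}(\mathfrak m)+k$; in every case $\rho(\mathfrak m)\leq r_{J}(\mathfrak m)+3$. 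No reduction-number estimate and no Veronese reduction computation are needed.
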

\begin{proof}
    If $\depth G(\mathfrak m) \geq 1,$ then $\rho(\mathfrak m)=1 \leq r_{J}(\mathfrak m)+3.$ Suppose $\depth G(\mathfrak m)=0.$ By \cite[Theorem 4.1]{ankit}, $\widetilde{\mathfrak m^{i}}=\mathfrak m^{i}$ for all $i \geq 3,$ which implies $\depth G(\mathfrak m^{3}) >0.$   Therefore, by Proposition \ref{shoni}, if   $\rho(I) \equiv 0 \mod 3$ then 
   $\rho(I) \leq r_{J}(I)+2 < r_{J}(I)+3$ and 
 if $\rho(I) \equiv k \mod t$ for $1 \leq k \leq 2,$ then
     $\rho(I) \leq r_{J}(I)+3$. 
\end{proof}

We conclude this section with the following  proposition, in which we examine the relationship between $r_{J}(I)$ and $\rho(I)$, focusing specifically on the cases where $\rho(I)$ takes small values. This proposition highlights the condition under which $\rho(I) \leq r_{J}(I).$

\begin{proposition}\label{archita}
   Let $(R,\mathfrak m)$ be a two dimensional Cohen-Macaulay local ring, $I$ an $\mathfrak m$-primary ideal with minimal reduction $J$. Then the following holds:
    \begin{enumerate}[(i)]
        \item If $\rho(I) =2,$ then $\rho(I) \leq r_{J}(I).$
        \item If $\rho(I) =3$ and $I$ is Ratliff-Rush closed, then $\rho(I) \leq r_{J}(I).$
    \end{enumerate}
\end{proposition}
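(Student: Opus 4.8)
The plan is to argue by contradiction in each case, using the generalization of Huneke's result in Proposition \ref{maa} (or equivalently its special case \cite[Proposition 4.3]{hjls}) together with the lower-bound observation in Remark \ref{shruti}: whenever $\widetilde{I^{r_{J}(I)}} \neq I^{r_{J}(I)}$ one has $r_{J}(I) < \rho(I)$, and whenever $\widetilde{I^{m}} = I^{m}$ for some $m \geq r_{J}(I)$ one gets $\rho(I) \leq m$. The key point is that for small $\rho(I)$ there are very few intermediate exponents to worry about, so the combinatorics collapses.

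For part $(i)$, suppose $\rho(I) = 2$ but $r_{J}(I) < \rho(I) = 2$, i.e.\ $r_{J}(I) \leq 1$. If $r_{J}(I) \leq 1$, then from the definition $J I = I^{2}$, and one checks directly (or via Proposition \ref{maa} applied with $m = 1 \geq r_{J}(I)$, once we know $\widetilde{I} = I$) that the Ratliff-Rush filtration is already stable from degree $1$, forcing $\rho(I) \leq 1$, contradicting $\rho(I) = 2$. The small gap to fill is why $r_{J}(I) \leq 1$ implies $\widetilde{I} = I$: this follows because $\depth G(I) \geq 1$ when $r_{J}(I) \leq 1$ (the associated graded ring of an ideal with reduction number at most one over a Cohen-Macaulay ring is itself Cohen-Macaulay by Valabrega-Valla type arguments, hence has positive depth in dimension $d \geq 2$), and then $\widetilde{I^{n}} = I^{n}$ for all $n \geq 1$ by the standard fact recalled in Remark \ref{shruti}.

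For part $(ii)$, suppose $\rho(I) = 3$, $I$ is Ratliff-Rush closed (so $\widetilde{I} = I$), and for contradiction $r_{J}(I) < \rho(I) = 3$, i.e.\ $r_{J}(I) \leq 2$. The case $r_{J}(I) \leq 1$ is handled exactly as in $(i)$ and gives $\rho(I) \leq 1$, a contradiction; so we may assume $r_{J}(I) = 2$. Now apply Proposition \ref{maa} with $m = r_{J}(I) = 2$: its hypothesis is $\widetilde{I^{2}} = I^{2}$, which we do not yet know. But by Remark \ref{shruti}, since $\rho(I) = 3 > r_{J}(I)$ we would need $\widetilde{I^{n}} \neq I^{n}$ for all $r_{J}(I) \leq n < \rho(I)$, i.e.\ only for $n = 2$; that is, $\widetilde{I^{2}} \neq I^{2}$. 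So the failure of stability is concentrated entirely at exponent $2$, while exponents $0,1$ are stable (degree $0$ trivially, degree $1$ by the Ratliff-Rush closed hypothesis). The contradiction should come from combining $\widetilde{I} = I$ and $r_{J}(I) = 2$ with a closer analysis: since $J I^{2} = I^{3}$ and $\widetilde{I} = I$, one can try to show $(I^{3} : x) = I^{2}$ for a superficial $x \in J$, which would give $\widetilde{I^{2}} = I^{2}$ by the chain $I^{2} \subseteq (I^{3}:x) \subseteq (\widetilde{I^{3}}:x) = \widetilde{I^{2}}$ and $\rho(I) = \rho_{x}(I)$ (Definition \ref{phn}); combined with $\widetilde{I}=I$ this forces $\rho(I) \leq 2$, the desired contradiction.

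The main obstacle is precisely this last step of part $(ii)$: deducing $\widetilde{I^{2}} = I^{2}$ from the hypotheses $\widetilde{I} = I$ and $r_{J}(I) = 2$. This is a genuine statement about how Ratliff-Rush stability at level $1$ propagates to level $2$ under a reduction number bound — the kind of propagation that is known to fail in general (as emphasized in the introduction), so the argument must use the specific numerology $r_{J}(I) = 2$. I expect one exploits that with $r_{J}(I) = 2$ the equality $I^{3} = J I^{2}$ holds, and then uses a Valabrega-Valla / superficial-element computation (or the machinery of the second fundamental exact sequence \eqref{eq:u} and the vanishing $\mathcal{B}(x,R)_{n} = (I^{n+1}:x)/I^{n}$ for small $n$) to pin down $(I^{3}:x) = I^{2}$; the Ratliff-Rush closedness of $I$ is what makes the base case $n=1$ of this analysis work. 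If a direct approach stalls, an alternative is to invoke the Hilbert-coefficient bounds of Proposition \ref{riya} or the mod-$t$ results of Proposition \ref{shoni} to rule out $\rho(I) = 3$ with $r_{J}(I) = 2$ under the extra Ratliff-Rush closed hypothesis.
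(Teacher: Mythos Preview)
Your part $(i)$ is correct and in fact cleaner than the paper's argument: you observe that $r_{J}(I)\leq 1$ forces $G(I)$ to be Cohen--Macaulay, hence $\depth G(I)\geq 1$ and $\rho(I)=1$, contradicting $\rho(I)=2$. The paper instead passes to $\bar R=R/(x)$ for a superficial $x\in J$, shows $\rho(I/(x))\geq 2$ via Sally descent, and then chains $\rho(I)=2\leq \rho(I/(x))\leq r_{J/(x)}(I/(x))\leq r_{J}(I)$ using the one-dimensional bound \cite[Proposition~4.2(i)]{rs}.

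For part $(ii)$ you correctly isolate the crux --- under $\widetilde{I}=I$ and $r_{J}(I)=2$ one must show $\widetilde{I^{2}}=I^{2}$ --- but you then declare this ``the main obstacle'' and stop. In fact your own suggested line finishes immediately. Write $J=(x,y)$ with $x,y$ a superficial (hence regular) sequence. Since $I^{3}=JI^{2}$, any $a\in(I^{3}:x)$ satisfies $xa=xb+yc$ with $b,c\in I^{2}$; regularity of $x,y$ gives $c=xc'$ with $c'\in(I^{2}:x)$, and cancelling $x$ yields $a=b+yc'\in I^{2}+y(I^{2}:x)$. But $(I^{2}:x)\subseteq(\widetilde{I^{2}}:x)=\widetilde{I}=I$ by \cite[Lemma~3.1(5)]{rv} and the hypothesis, so $y(I^{2}:x)\subseteq yI\subseteq I^{2}$ and hence $(I^{3}:x)=I^{2}$. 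Now the propagation step from the proof of Proposition~\ref{maa} (or Lemma~\ref{papa}) gives $(I^{n+1}:x)=I^{n}$ for all $n\geq 2$, i.e.\ $\rho(I)=\rho_{x}(I)\leq 2$, the desired contradiction. (A small correction: your displayed chain $I^{2}\subseteq(I^{3}:x)\subseteq(\widetilde{I^{3}}:x)=\widetilde{I^{2}}$ does \emph{not} by itself give $\widetilde{I^{2}}=I^{2}$ from $(I^{3}:x)=I^{2}$; you need the propagation to $\rho_{x}(I)$, not just the single colon equality.)

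The paper's proof of $(ii)$ is structurally different. As in $(i)$ it works in $\bar R=R/(x)$: first $\rho(I/(x))\geq 2$ as before, and then one rules out $\rho(I/(x))=2$ by an element chase. Namely if $\rho(I/(x))=2$ then $\widetilde{I^{2}R/(x)}=I^{2}R/(x)$, so any $y\in\widetilde{I^{2}}\setminus I^{2}$ lies in $I^{2}+(x)$, say $y=z+rx$ with $z\in I^{2}$; then $rx\in\widetilde{I^{2}}$ forces $r\in(\widetilde{I^{2}}:x)=\widetilde{I}=I$, whence $rx\in I^{2}$ and $y\in I^{2}$, a contradiction. Thus $\rho(I/(x))\geq 3$, and again $\rho(I)=3\leq\rho(I/(x))\leq r_{J/(x)}(I/(x))\leq r_{J}(I)$. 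Both approaches hinge on the same identity $(\widetilde{I^{2}}:x)=\widetilde{I}$ combined with $\widetilde{I}=I$; yours stays in $R$ and uses superficial--element propagation, while the paper's detours through dimension one and the known bound $\rho(\bar I)\leq r_{\bar J}(\bar I)$.
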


\begin{proof}
    \begin{enumerate}[(i)]
        \item Suppose $\rho(I)=2,$ then from \cite[Remark 1.6]{rs}, we have $\depth G(I)=0$. Let $x \in J$ be a superficial element for $I.$ If $\rho(I/(x))=1,$ then again by \cite[Remark 1.6]{rs}, $\depth G(I/(x))>0$. Hence, by Sally's Descent, $\depth G(I)>1$, which is a contradiction. Therefore, $\rho(I) \leq \rho(I/(x)) \leq r_{J/(x)}(I/(x))\leq r_{J}(I),$ where the second inequality follows from \cite[Proposition 4.2$(i)$]{rs}.
 \item Suppose $\rho(I)=3$. Let $x \in J$ be a superficial element for $I,$ then from the similar arguments as above, we have $\rho(I/(x)) > 1.$ Note that $\frac{I^{n}+(x)}{(x)} \subseteq \frac{\widetilde{I^{n}}+(x)}{(x)} \subseteq \widetilde{\frac{I^{n}+(x)}{(x)}}$ for all $n \geq 1.$ If possible let $\rho(I/(x)) = 2,$ then $\widetilde{\frac{I^{n}+(x)}{(x)}}=\frac{I^{n}+(x)}{(x)}=\frac{\widetilde{I^{n}}+(x)}{(x)}$ for all $n \geq 2.$ At $n=2,$ $\widetilde{I^{n}} \neq I^{n}$. Therefore, there exist $y \in \widetilde{I^{2}}\backslash I^{2},$ such that $\Bar{y} \in \frac{\widetilde{I^{2}}+(x)}{(x)}=\frac{I^{2}+(x)}{(x)}.$ This implies $y \in I^{2}+(x),$ thus there exists $z \in I^{2}$ such that $y=z+rx$ for some $r \in R.$ Note that $y-z=rx \in \widetilde{I^{2}},$ this implies $r \in (\widetilde{I^{2}}:x)=\widetilde{I}$. Since $I$ is Ratliff-Rush closed, therefore $r \in I$ which implies $rx \in I^{2}.$ Thus, $y \in I^{2},$ which is a contradiction. Therefore, $\rho(I/(x)) >2.$ Thus, $\rho(I) \leq \rho(I/(x)) \leq r_{J/(x)}(I/(x))\leq r_{J}(I),$ where the second inequality follows from \cite[Proposition 4.2$(i)$]{rs}.  \qedhere
    \end{enumerate}
\end{proof}

\section{Bounds on the Stability Index for Buchsbaum Rings}
We begin this section by extending \cite[Proposition 4.3]{hjls} to two-dimensional Buchsbaum rings.

 \begin{lemma} \label{papa}
     Let $R$ be a two dimensional Buchsbaum ring with $\depth R>0$ and $I$ an $\mathfrak m$-primary ideal. Let $x,y$ be a superficial sequence for $I$ such that $J=(x,y)$ is a minimal reduction of $I.$  Set $r_{J}(I)=r.$ If $\widetilde{I^{m}}=I^{m}$ for some $m \geq r,$ then $\widetilde{I^{n}}=I^{n}$ for all $n \geq m.$  In particular, $\rho(I) \leq m.$
 \end{lemma}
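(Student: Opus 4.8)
The plan is to mimic the proof of Proposition \ref{maa}, but replace the use of the regular-sequence property (which fails in a Buchsbaum ring) by the weak-sequence property built into the Buchsbaum hypothesis, together with the assumption $\depth R > 0$ so that $x$ is a nonzerodivisor. First I would invoke \cite[Lemma 3.1(5)]{rv} (or its analogue available in this setting) to identify $(\widetilde{I^{m+1}}:x) = \widetilde{I^{m}}$; combined with $\widetilde{I^{m}} = I^{m}$ and the obvious inclusions $I^{m} \subseteq (I^{m+1}:x) \subseteq (\widetilde{I^{m+1}}:x)$, this yields $(I^{m+1}:x) = I^{m}$, hence $I^{m+1} \cap (x) = xI^{m}$, the analogue of \eqref{rty}.

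The next step is to establish the analogue of \eqref{qwe}, namely $I^{n+1} \cap [(x):y] = I^{n+1} \cap (x)$ for all $n$. Here is where the Buchsbaum property enters: since $x, y$ is part of a system of parameters (as $J = (x,y)$ is a minimal reduction of the $\mathfrak m$-primary ideal $I$), the weak-sequence condition gives $(x):y = (x):\mathfrak m$. Then for $a \in I^{n+1} \cap [(x):y]$ we have $a\mathfrak m \subseteq (x)$; intersecting with $I^{n+1}$ and using that $a \in I^{n+1} \subseteq \mathfrak m^{n+1}$, I would argue that $a \in (x)$ — the cleanest route is to note $(x):\mathfrak m / (x) \subseteq \mathcal{H}^{0}_{\mathfrak m}(R/(x))$, and since $\depth R > 0$ makes $x$ a nonzerodivisor, $R/(x)$ has depth $0$ only through this socle, but $a \in I^{n+1}$ with $I$ $\mathfrak m$-primary forces $a$ into a high power of $\mathfrak m$ which meets $(x):\mathfrak m$ only inside $(x)$ (this uses that $(x):\mathfrak m$ is not contained in any fixed power of $\mathfrak m$ beyond $(x)$, equivalently that $((x):\mathfrak m) \cap \mathfrak m^{N} \subseteq (x)$ for $N \gg 0$, which holds because $((x):\mathfrak m)/(x)$ is a finite-length module). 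Once this is in place, combining with the $m$-th instance gives $I^{m+1} \cap [(x):y] = xI^{m}$.

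From there the argument is formal: apply Proposition \ref{trung001} with $J = (x,y)$, $s = 2$, and $r = m \geq r_J(I)$ — I need the hypothesis of that proposition, i.e. $I^{m+1} \cap [(x):y] = xI^{m}$ (just shown) and $I^{m+1} \cap (0:x) = 0$, the latter being immediate since $\depth R > 0$ implies $0:x = 0$. The conclusion of Proposition \ref{trung001} then gives $I^{n+1} \cap [(x):y] = xI^{n}$ for all $n \geq m$; feeding this back through $I^{n+1} \cap [(x):y] = I^{n+1} \cap (x)$ yields $I^{n+1} \cap (x) = xI^{n}$, hence $(I^{n+1}:x) = I^{n}$ for all $n \geq m$, i.e. $\rho_x(I) \leq m$. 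Finally Puthenpurakal's result $\rho(I) = \rho_x(I)$ (Definition \ref{phn}) gives $\rho(I) \leq m$.

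The main obstacle I anticipate is the second step: in Proposition \ref{maa} the equality $I^{n+1} \cap [(x):y] = I^{n+1} \cap (x)$ followed instantly from $x,y$ being a regular sequence, but in the Buchsbaum case I must extract it from the weak-sequence condition $(x):y = (x):\mathfrak m$, and I must control the module $((x):\mathfrak m)/(x)$ — its finite length, its relation to $\mathcal{H}^0_{\mathfrak m}(R/(x))$, and the fact that elements of $I^{n+1}$ eventually avoid it. Care is also needed to confirm that $x,y$ can genuinely be taken to be a superficial sequence generating a minimal reduction while simultaneously forming (part of) a system of parameters to which the Buchsbaum weak-sequence hypothesis applies — this is exactly the setup stated in the lemma, so it should be available, but I would want to cite the precise existence result (analogous to \cite[Lemma 1.2]{rtt}) valid for Buchsbaum rings of positive depth.
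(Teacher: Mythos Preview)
Your overall architecture is sound, and the first and last steps match the paper's. The genuine gap is exactly the one you flag as the ``main obstacle'': your argument for
\[
I^{n+1}\cap\bigl[(x):y\bigr]=I^{n+1}\cap(x)
\]
does not go through. Using the Buchsbaum identity $(x):y=(x):\mathfrak m$ and the finite length of $((x):\mathfrak m)/(x)$ you only get, via Artin--Rees, that $((x):\mathfrak m)\cap(\mathfrak m^{N}+(x))\subseteq(x)$ for $N\gg 0$. This yields the displayed equality only for $n$ sufficiently large, not for $n=m$ or for all $n\geq m$, which is what both your application of Proposition~\ref{trung001} and the subsequent ``feeding back'' step require. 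There is no reason why the particular $m\geq r_{J}(I)$ in the hypothesis should exceed the Artin--Rees threshold, so the argument as written has a hole precisely at the values of $n$ you need.

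The paper closes this gap not by a self-contained socle argument but by citing the proof of \cite[Theorem~2.4]{rtt}, which establishes $I^{n}\cap[(x):y]=I^{n}\cap(x)=x(I^{n}:x)$ in the two-dimensional Buchsbaum setting with $\depth R>0$ for the relevant range of $n$; that result uses more than the bare weak-sequence condition. With this identity in hand, the paper also organises the endgame differently: rather than invoking Proposition~\ref{trung001}, it derives the recursion $(I^{n+1}:x)=I^{n}+y(I^{n}:x)$ for all $n\geq r+1$ and then iterates directly from $(I^{m+1}:x)=I^{m}$. Your route through Proposition~\ref{trung001} would be a legitimate alternative once the identity from \cite{rtt} is available, but as written your substitute for that identity does not suffice.
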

\begin{proof}
 We will first show that $(I^{n+1}:x)=I^{n}+y(I^{n}:x)$ for all $n \geq r+1.$  Let $f$ be an arbitrary element in $(I^{n+1}:x)$ for any $n\geq r+1,$ this implies that $xf\in I^{n+1}.$ Since $I^{n+1}=(x,y)I^{n},$ thus there are elements $p,q \in I^{n}$ such that $xf=xp+yq,$ this implies $q \in [(x):y].$ From this, it follows that $q \in I^{n}\cap [(x):y].$ Now from the proof of \cite[Theorem 2.4]{rtt}, we have
     \begin{equation*}
         I^{n}\cap [(x):y]=I^{n}\cap (x)=x(I^{n}:x).
     \end{equation*}
     Hence, $q=xq'$ for some $q' \in (I^{n}:x).$ Thus, $xf=xp+xyq'$. Since $x$ is a non-zero divisor, we have $f=p+yq'\in I^{n}+y(I^{n}:x).$ Therefore, $(I^{n}:x) \subseteq I^{n}+y(I^{n}:x).$ The reverse inclusion is obvious, thus we have $(I^{n+1}:x)=I^{n}+y(I^{n}:x)$ for all $n \geq r+1.$
     
   Now it is enough to prove that if $\widetilde{I^{n}}=I^{n}$ for $n=m+1$, then the conclusion follows.  
   By \cite[Lemma 3.1 (5)]{rv}, we get the first equality in the chain
   $$I^{m} \subseteq (I^{m+1}:x) \subseteq (\widetilde{I^{m+1}}:x)= \widetilde{I^{m}}=I^{m}.$$
   The last equality follows from the hypothesis. Hence equalities hold from left to right in the last chain; in particular,  $(I^{m+1}:x)=I^{m}.$ Next as $m+1 \geq r+1,$ by the first paragraph of the proof 
   $$(I^{m+2}:x)=I^{m+1}+y(I^{m+1}:x)=I^{m+1}+yI^{m}=I^{m+1}.$$
   Arguing similarly, we have $(I^{n+1}:x)=I^{n}$ for all $n\geq m.$ Thanks to  Definition \ref{phn}, we deduce $\rho(I)=\rho_{x}(I) \leq m,$ as desired.
 \end{proof}

The formula in Proposition \ref{riya}, $\rho(I) \leq r_{J}(I)-1+(e_{2}(I)-1)e_{2}(I)-e_{3}(I)$ raises an intriguing question that  if $R$ is not Cohen-Macaulay but still has dimension two, what possible bounds can be established? In this section, we explore this question in the context of Buchsbaum rings. Note that in  Theorem  \ref{riyi}, for two-dimensional Buchsbaum rings with positive depth, the same arguments follow from Lemma \ref{papa}, which gives  $\rho(I) \leq  r_{J}(I)-1-e_{3}(I)+\widetilde{e}_{3}(I).$ But the bound on $\widetilde{e}_{3}(I)$ in Lemma \ref{mainak} does not hold for the non Cohen-Macaulay rings. Therefore, we need a change of rings to understand whether a similar bound on $\widetilde{e}_{3}(I)$ exists or not.

For a Buchsbaum ring $R,$ we have a finite integral extension into a  ring $\mathbb S$, called the $S_{2}$-fication of $R,$ We briefly recall the $S_{2}$-fication of  generalized Cohen-Macaulay rings from the literature, for further reading and for the elementary properties of the $S_{2}$-fication, we redirect the readers to \cite[Theorem 3.2]{aoyama} and \cite[Proposition 4.1, Theorem 4.2]{gghv}.

\textbf{$S_{2}$-fication of  generalized Cohen-Macaulay rings:}\cite[Section 4]{gghv} If $R$ is a Noetherian ring with the total ring of fractions $\mathbb K$, the smallest finite extension $\phi: R\longrightarrow \mathbb S \subseteq \mathbb K$ with the $S_{2}$ property is called the $S_{2}$-fication of $R$. The map $\phi$ or $\mathbb S$ is referred as the $S_{2}$-fication of $R.$ 
Let $(R, \mathfrak m)$ be a two-dimensional Buchsbaum ring with  $\depth R >0$  and $\mathbb S$ be the $S_{2}$-fication of $R$. We may assume $R$ to be complete (see \cite[Chapter 1, Lemma 1.13]{sv}). From \cite[Proposition 4.1(2)]{gghv}, we get that $\mathbb S$ is a semi-local ring, therefore, let $\{\mathfrak M_{1},\ldots, \mathfrak M_{l}\}$ be the maximal ideals of $\mathbb S,$ and let $\mathbb S_{1},\ldots,\mathbb S_{l}$ denote the corresponding localizations. For each $i=1,2,\ldots,l,$ let $f_{i}$ be the relative degree $[\mathbb S/\mathfrak M_{i}:R/\mathfrak m].$ If $W$ is an $R$-module of finite length which is also an $\mathbb S$-module, then from the proof of \cite[Theorem 4.3]{gghv}, we have that
\begin{equation} \label{vivek}
    \lambda_{R}(W)= \displaystyle \sum_{i=1}^{l}\lambda_{\mathbb S_{i}}(W\mathbb S_{i})f_{i}.
\end{equation}

The following lemma gives the relationship between the Hilbert coefficients of the ideal $I\mathbb S$ and the ideals $I \mathbb S_{i}$ for $i=1,2,\ldots,l.$ While the proof of this lemma is analogous to the arguments in  \cite[Theorem 4.3]{gghv},   we include the steps for completeness.

\begin{lemma} \label{bintere}
     Let $(R, \mathfrak m)$ be a two-dimensional Buchsbaum ring with  $\depth R >0$  and $\mathbb S$ be the $S_{2}$-fication of $R$. Let $I$ be an $\mathfrak m$-primary ideal. Then 
     \begin{enumerate}[(i)]
         \item  $e^{R}(I\mathbb S)=\displaystyle \sum_{i=1}^{l}e^{\mathbb S_{i}}(I\mathbb S_{i})f_{i}$
         \item    $e_{1}^{R}(I\mathbb S)=\displaystyle \sum_{i=1}^{l}e_{1}^{\mathbb S_{i}}(I\mathbb S_{i})f_{i}$
         \item   $e_{2}^{R}(I\mathbb S)=\displaystyle \sum_{i=1}^{l}e_{2}^{\mathbb S_{i}}(I\mathbb S_{i})f_{i}$ 
         \item $e_{3}^{R}(I\mathbb S)=\displaystyle \sum_{i=1}^{l}e_{3}^{\mathbb S_{i}}(I\mathbb S_{i})f_{i}.$
     \end{enumerate}
\end{lemma}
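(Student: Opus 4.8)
The plan is to reduce the computation of each Hilbert coefficient of $I\mathbb{S}$ over $R$ to a sum of the corresponding coefficients over the local rings $\mathbb{S}_i$, using the length formula \eqref{vivek} as the bridge. First I would fix large $n$ and compare the Hilbert--Samuel functions. Since $\mathbb{S}$ is module-finite over $R$ and $I$ is $\mathfrak{m}$-primary, $I\mathbb{S}$ is an ideal of $\mathbb{S}$ with $\mathbb{S}/(I\mathbb{S})^{n+1}$ of finite length as an $R$-module; moreover $\mathbb{S}/(I\mathbb{S})^{n+1}$ is naturally an $\mathbb{S}$-module, so \eqref{vivek} applies and gives
\begin{equation*}
    \lambda_R\!\left(\mathbb{S}/(I\mathbb{S})^{n+1}\right)=\sum_{i=1}^{l}\lambda_{\mathbb{S}_i}\!\left(\mathbb{S}_i/(I\mathbb{S}_i)^{n+1}\right)f_i .
\end{equation*}
The left side is, up to the length-zero discrepancy $\lambda_R(\mathbb{S}/R)$ which is constant in $n$ for $n\gg0$ (because $(I\mathbb{S})^{n}\cap R$ stabilizes appropriately, or simply because both $\mathbb{S}$ and its filtration are $R$-finite), a first Hilbert function over $R$ of degree $d=2$; each summand on the right is the first Hilbert function of $I\mathbb{S}_i$ over the local ring $\mathbb{S}_i$, which has dimension $2$ as well since $\mathbb{S}$ is module-finite over $R$ and catenary.

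Next I would pass to the second Hilbert function: summing the displayed identity over $n$ from $0$ to $m$ for $m\gg0$, and noting that the constant correction term contributes only a term linear in $m$ (hence affects only $e_2$-and-lower in a controlled, ultimately cancelling way — in fact one checks the correction is absorbed because $e_i^R(I\mathbb{S})$ is \emph{defined} via the filtration on $\mathbb{S}$, not on $R$), one obtains for $m\gg0$
\begin{equation*}
    P^{2,R}_{I\mathbb{S}}(m)=\sum_{i=1}^{l}P^{2,\mathbb{S}_i}_{I\mathbb{S}_i}(m)\,f_i .
\end{equation*}
Both sides are numerical polynomials in $m$ of degree $d+1=3$. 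Now I would expand each in the binomial basis $\binom{m+3}{3},\binom{m+2}{2},\binom{m+1}{1},\binom{m}{0}$ as in Definition~1.3 (the second Hilbert polynomial formula), and compare coefficients: the coefficient of $\binom{m+3}{3}$ gives (i), of $-\binom{m+2}{2}$ gives (ii), of $\binom{m+1}{1}$ gives (iii), and of $-\binom{m}{0}$ gives (iv). This is just linear independence of these binomial polynomials.

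The main obstacle — and the only place requiring care — is the bookkeeping of the discrepancy between the $R$-adic-type filtration data and the genuinely $\mathbb{S}$-module structure: one must make sure that ``$e_i^R(I\mathbb{S})$'' is unambiguously the coefficient extracted from $\lambda_R(\mathbb{S}/(I\mathbb{S})^{n+1})$ viewed as a function of $n$ (this is the standard convention, since $\mathbb{S}$ is a finite $R$-module and $I\mathbb{S}$ is $\mathfrak{m}$-primary in the sense that $\mathbb{S}/I\mathbb{S}$ has finite length), and that each $\mathbb{S}_i$ is indeed two-dimensional local with $I\mathbb{S}_i$ primary to its maximal ideal — both of which follow from $\mathbb{S}$ being module-finite over the two-dimensional local ring $R$, so that $\dim \mathbb{S}_i = \dim \mathbb{S} = 2$ for every maximal ideal $\mathfrak{M}_i$ lying over $\mathfrak{m}$ (and all of them do, since $\mathbb{S}$ is integral over $R$). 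Once this is pinned down, the proof is a direct application of \eqref{vivek} term by term in the Hilbert series expansion, exactly mirroring \cite[Theorem 4.3]{gghv}; I would simply write out the four comparisons and invoke the uniqueness of the Hilbert coefficients.
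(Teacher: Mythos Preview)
Your approach is correct and essentially the same as the paper's: both apply \eqref{vivek} to the finite-length $\mathbb{S}$-modules $\mathbb{S}/(I\mathbb{S})^{n+1}$ and then compare coefficients of the resulting numerical polynomials (the paper does this in stages via the Hilbert function, the Hilbert--Samuel function, and then the second Hilbert polynomial, whereas you extract all four coefficients at once from $P^{2}$, which is slightly cleaner). Your worry about a ``discrepancy $\lambda_R(\mathbb{S}/R)$'' is unfounded and should simply be deleted --- by definition $e_i^{R}(I\mathbb{S})$ is read off directly from $n\mapsto\lambda_R(\mathbb{S}/(I\mathbb{S})^{n+1})$, so no correction term ever appears.
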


\begin{proof}
  Applying formula  (\ref{vivek}) to the modules $I^{n} \mathbb S/I^{n+1} \mathbb S$ and $\mathbb S/I^{n+1} \mathbb S$, we get $\lambda_{R}\left(I^{n} \mathbb S/I^{n+1} \mathbb S\right) = \displaystyle \sum_{i=1}^{l}  \lambda_{\mathbb S_{i}}\left(I^{n} \mathbb S_{i}/I^{n+1} \mathbb S_{i}\right)f_{i}$ and $ \lambda_{R}\left( \mathbb S/I^{n+1} \mathbb S\right) = \displaystyle \sum_{i=1}^{l}  \lambda_{\mathbb S_{i}}\left( \mathbb S_{i}/I^{n+1} \mathbb S_{i}\right)f_{i}$. Then for sufficiently large $n$, we have the following equations respectively:
\begin{equation} \label{seven}
    e^{R}(I\mathbb S)(n+1)-e_{1}^{R}(I\mathbb S)= \displaystyle \sum_{i=1}^{l}e^{\mathbb S_{i}}(I\mathbb S_{i})f_{i}(n+1)-\displaystyle \sum_{i=1}^{l}e_{1}^{\mathbb S_{i}}(I\mathbb S_{i})f_{i},
\end{equation}
\begin{equation} \label{eight}
      \displaystyle \sum_{j=0}^{2}(-1)^{j} e_{j}^{R}(I\mathbb S) \binom{n+2-j}{2-j}= 
  \displaystyle \sum_{j=0}^{2}(-1)^{j}\left(  \displaystyle \sum_{i=1}^{l}e_{j}^{\mathbb S_{i}}(I\mathbb S_{i})f_{i}\right)\binom{n+2-j}{2-j}.
\end{equation}

On comparing the coefficient of $n$ in equation (\ref{seven}), we get $e^{R}(I\mathbb S)=\displaystyle \sum_{i=1}^{l}e^{\mathbb S_{i}}(I\mathbb S_{i})f_{i}$ and
         $e_{1}^{R}(I\mathbb S)=\displaystyle \sum_{i=1}^{l}e_{1}^{\mathbb S_{i}}(I\mathbb S_{i})f_{i}.$ 
For $n=0$ in equation (\ref{eight}), we get $e^{R}(I\mathbb S)-e_{1}^{R}(I\mathbb S)+e_{2}^{R}(I\mathbb S)= \displaystyle \sum_{i=1}^{l}e^{\mathbb S_{i}}(I\mathbb S_{i})f_{i}-\displaystyle \sum_{i=1}^{l}e_{1}^{\mathbb S_{i}}(I\mathbb S_{i})f_{i}+\displaystyle \sum_{i=1}^{l}e_{2}^{\mathbb S_{i}}(I\mathbb S_{i})f_{i}$. Therefore, we have  $e_{2}^{R}(I\mathbb S)=\displaystyle \sum_{i=1}^{l}e_{2}^{\mathbb S_{i}}(I\mathbb S_{i})f_{i}.$

Now consider $\displaystyle \sum_{j=0}^{n}\lambda_{R}\left( \mathbb S/I^{j+1} \mathbb S\right) =\displaystyle \sum_{j=0}^{n} \displaystyle \sum_{i=1}^{l}  \lambda_{\mathbb S_{i}}\left(\mathbb S_{i}/I^{j+1} \mathbb S_{i}\right)f_{i},$ then for sufficiently large $n$, we get that
\begin{equation} \label{nine1}
   \displaystyle \sum_{j=0}^{3}(-1)^{j} e_{j}^{R}(I\mathbb S) \binom{n+3-j}{3-j}= 
  \displaystyle \sum_{j=0}^{3}(-1)^{j}\left(  \displaystyle \sum_{i=1}^{l}e_{j}^{\mathbb S_{i}}(I\mathbb S_{i})f_{i}\right)\binom{n+3-j}{3-j}.
\end{equation}

For $n=0$ in equation (\ref{nine1}) and substituting values of $e^{R}(I\mathbb S)$, $e_{1}^{R}(I\mathbb S)$, and $e_{2}^{R}(I\mathbb S)$ from above, we get $e_{3}^{R}(I\mathbb S)=\displaystyle \sum_{i=1}^{l}e_{3}^{\mathbb S_{i}}(I\mathbb S_{i})f_{i}.$
\end{proof}

\begin{remark}
   \begin{enumerate} [(i)]
    \label{remark1}\item  Let $(R, \mathfrak m)$ be a two-dimensional Buchsbaum ring with  $\depth R >0$  and $\mathbb S$ is the $S_{2}$-fication of $R$ and  $I$ an $\mathfrak m$-primary ideal with minimal reduction $J$. Then
      from the proof of \cite[Theorem 4.3]{gghv}, we have 
 $r_{J\mathbb S}(I\mathbb S)=\max\{r_{J\mathbb S_{1}}(I\mathbb S_{1}),\ldots,r_{J\mathbb S_{l}}(I\mathbb S_{l})\}.$ 

 \item Note that $\rho(I\mathbb S)=\max\{\rho(I\mathbb S_{i}),\ldots,\rho(I\mathbb S_{l})\}.$ To see this, consider the localization maps $\phi_{i}:\mathbb S \longrightarrow \mathbb S_{i},$ for every $i=1,2,\ldots, l.$ Then clearly $\rho(I\mathbb S_{i}) \leq \rho(I \mathbb S)$ for all $i=1,2,\ldots,l,$ this implies that  $\displaystyle \max_{1\leq i\leq l}(\rho(I \mathbb S_{i})) \leq \rho(I \mathbb S).$ Now let $m=\max\{\rho(I\mathbb S_{i}),\ldots,\rho(I\mathbb S_{l})\},$ and let $x\mathbb S_{i}$ be the image of $x$ is $\mathbb S_{i}$ for every $i=1,2,\ldots,l,$ where $x \in I\backslash I^{2}$ is a superficial element for $I.$
 Then for every $n \geq m$, $(I^{n+1}\mathbb S_{i}:x\mathbb S_{i})=I^{n}\mathbb S_{i}$ for all $i=1,2,\ldots,l$ and for every maximal ideal $\mathfrak M_{i}.$ Therefore, from the local properties of localization, we have $(I^{n+1}\mathbb S:x\mathbb S)=I^{n}\mathbb S$ for every $n \geq m,$ this implies $\rho (I \mathbb S)\leq m.$ Thus,   $\rho(I\mathbb S)=\max\{\rho(I\mathbb S_{i}),\ldots,\rho(I\mathbb S_{l})\}$.
   \end{enumerate}
\end{remark}

In the following lemma, we establish the relationships between $e_{i}(I)$ and $e_{i}^{R}(I \mathbb S)$ for $0\leq i\leq 3$ under the assumption that $I=I\mathbb S.$

 \begin{lemma} \label{coefflemma}
    Let $(R, \mathfrak m)$ be a two dimensional Buchsbaum ring with $\depth R>0$ and $\mathbb S$ be the $S_{2}$-fication of $R.$ Let $I$ be an $\mathfrak m$-primary ideal with minimal reduction $J$ and assume that $I=I\mathbb S$. Then we have the following:
    \begin{enumerate}[(i)]
        \item \label{eo} $e_{i}(I)=e_{i}^{R}(I \mathbb S)$ for $0\leq i \leq 1,$
        \item \label{e1} $e_{2}(I)=e_{2}^{R}(I \mathbb S)+e_{1}(J),$
        \item $e_{3}(I)=e_{3}^{R}(I \mathbb S).$
    \end{enumerate}
\end{lemma}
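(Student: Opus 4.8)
The plan is to exploit the change-of-rings relationships recorded in Lemma \ref{bintere}, together with the difference formula and the fact that $I = I\mathbb{S}$ forces a tight comparison between the Hilbert functions of $I$ over $R$ and of $I\mathbb{S}_i$ over $\mathbb{S}_i$. First I would note that since $\mathbb{S}$ is the $S_2$-fication of the two-dimensional Buchsbaum ring $R$, each localization $\mathbb{S}_i$ is a two-dimensional Cohen--Macaulay local ring, so the ordinary Hilbert-coefficient theory applies to $I\mathbb{S}_i$. The hypothesis $I = I\mathbb{S}$ means that for every $n$ the $R$-module $\mathbb{S}/I^n\mathbb{S}$ differs from $R/I^n$ only by the fixed length $\lambda_R(\mathbb{S}/R)$; more precisely $\lambda_R(\mathbb{S}/I^n\mathbb{S}) = \lambda_R(R/I^n) + \lambda_R(\mathbb{S}/R)$ for all $n$, because $I^n\mathbb{S} = I^n$ for all $n$ (from $I=I\mathbb{S}$ one gets $I^n = I^n\mathbb{S}$ inductively), hence $R/I^n \hookrightarrow \mathbb{S}/I^n\mathbb{S}$ with cokernel $\mathbb{S}/R$.

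Granting that, the Hilbert--Samuel function $H_I(n) = \lambda_R(R/I^{n+1})$ and the function $\lambda_R(\mathbb{S}/I^{n+1}\mathbb{S})$ differ by a constant, so their Hilbert--Samuel polynomials differ only in the constant term. Writing out the polynomials of degree $d+1 = 3$ in the binomial basis $\binom{n+3-j}{3-j}$ and comparing coefficients of the non-constant terms, we get $e_i(I) = e_i^R(I\mathbb{S})$ for $0 \le i \le 2$, while the constant terms give $e_3(I) = e_3^R(I\mathbb{S}) + \lambda_R(\mathbb{S}/R)$ — but this is not quite the claimed statement, so the actual comparison must be more delicate. The correct route is to use the \emph{first} Hilbert polynomials (degree $d=2$): since $H_I(n)$ and $\lambda_R(\mathbb{S}/I^{n+1}\mathbb{S})$ agree up to a constant, matching the degree-$2$ and degree-$1$ coefficients gives $e_0(I) = e_0^R(I\mathbb{S})$ and $e_1(I) = e_1^R(I\mathbb{S})$, which is part \eqref{eo}. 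The discrepancy in the constant term of the first Hilbert polynomial then contributes to $e_2$, and here is where $e_1(J)$ enters: the gap $\lambda_R(\mathbb{S}/R)$ should be expressible via the minimal reduction $J$, presumably through an identity like $\lambda_R(\mathbb{S}/R) = e_1(J)$ (or via $\lambda(R/J^{n+1})$ versus $\lambda(\mathbb{S}/J^{n+1}\mathbb{S})$, using that $J\mathbb{S}$ is a parameter ideal so its higher coefficients over $\mathbb{S}_i$ vanish while $e_1(J) \ne 0$ over the Buchsbaum ring $R$). That yields part \eqref{e1}.

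For part (iii), I would pass to the second Hilbert function: the second Hilbert--Samuel functions of $I$ over $R$ and of $I\mathbb{S}$ over $R$ differ by a \emph{linear} function of $n$ (the running sum of a constant), so their degree-$3$ polynomials $P^2$ differ only in the degree-$1$ and degree-$0$ coefficients — i.e.\ the top three coefficients $e_0, e_1, e_2$ might shift but $e_3$ (the constant term of $P^2$, up to sign) is unaffected, giving $e_3(I) = e_3^R(I\mathbb{S})$. Combining with Lemma \ref{bintere}(iv), one also gets $e_3(I) = \sum_i e_3^{\mathbb{S}_i}(I\mathbb{S}_i) f_i$, which is what makes the bound on $\widetilde{e}_3$ transfer back. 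The main obstacle I anticipate is pinning down exactly which "constant" or "linear" shift occurs at each level and identifying it cleanly with $e_1(J)$ — this requires carefully tracking $\lambda_R(\mathbb{S}/R)$ through the filtration $\{I^n\mathbb{S}\}$ and through $\{J^n\mathbb{S}\}$, and invoking that over each Cohen--Macaulay $\mathbb{S}_i$ the parameter ideal $J\mathbb{S}_i$ has $e_1^{\mathbb{S}_i}(J\mathbb{S}_i) = 0$, so that the Buchsbaum-specific quantity $e_1(J)$ is precisely the defect measured by $\lambda_R(\mathbb{S}/R)$ (via formula \eqref{vivek}). Once that identification is in hand, parts (i)--(iii) follow by routine coefficient comparison.
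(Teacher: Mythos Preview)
Your overall strategy matches the paper's: from $I=I\mathbb S$ deduce $I^{n}\mathbb S=I^{n}$ for all $n$, use the short exact sequence $0\to R\to\mathbb S\to\mathbb S/R\to 0$ to get $\lambda_R(\mathbb S/I^{n+1}\mathbb S)=\lambda(R/I^{n+1})+\lambda_R(\mathbb S/R)$ for all $n$, and then compare Hilbert polynomials (first polynomial for (i)--(ii), second polynomial for (iii)). One point the paper makes explicit that you glossed over: to conclude the cokernel is all of $\mathbb S/R$ (not a proper quotient) one needs $I^{n+1}(\mathbb S/R)=0$, and this uses the Buchsbaum hypothesis via $\mathfrak m\,\mathcal H^1_{\mathfrak m}(R)=0$.

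The genuine gap is in the identification of the constant $\lambda_R(\mathbb S/R)$ with $-e_1(J)$ (note the sign: you wrote $\lambda_R(\mathbb S/R)=e_1(J)$, but $e_1(J)\le 0$ here and the length is $-e_1(J)$). Your proposed route --- rerun the constant-shift argument with $J$ in place of $I$ and invoke $e_1^{\mathbb S_i}(J\mathbb S_i)=0$ over the Cohen--Macaulay localizations --- does not go through, because the hypothesis is $I=I\mathbb S$, not $J=J\mathbb S$. Without $J^{n+1}\mathbb S\cap R=J^{n+1}$ the map $R/J^{n+1}\to\mathbb S/J^{n+1}\mathbb S$ need not be injective; its kernel $(J^{n+1}\mathbb S\cap R)/J^{n+1}$ can grow with $n$, so $\lambda(R/J^{n+1})$ and $\lambda_R(\mathbb S/J^{n+1}\mathbb S)$ do \emph{not} differ by a constant, and you cannot conclude $e_1(J)=e_1^R(J\mathbb S)$. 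The paper bypasses this entirely by citing two standard facts: the $S_2$-ification construction gives $\mathbb S/R\cong\mathcal H^1_{\mathfrak m}(R)$ (from \cite[Proposition~4.1]{gghv}), and for a two-dimensional Buchsbaum ring with positive depth one has $\lambda(\mathcal H^1_{\mathfrak m}(R))=-e_1(J)$ (from \cite[Chapter~1, Propositions~2.6 and~2.7]{sv}). Once that identification is in hand, your coefficient comparison finishes the proof exactly as you described.
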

\begin{proof}
    From the proof of \cite[Proposition 4.1]{gghv}, consider the short exact sequence
    \begin{equation} \label{gotoseq}
        0\longrightarrow R\longrightarrow \mathbb S\longrightarrow D\longrightarrow 0.
     \end{equation}
     Note that $D \cong \mathcal{H}^{1}_{\mathfrak m}(R)$ is a module of finite length. Tensoring (\ref{gotoseq}) with $R/I^{n+1},$ we get
     \begin{equation*}
         0 \longrightarrow \frac{I^{n+1}\mathbb S \cap R}{I^{n+1}} \longrightarrow R/I^{n+1}\longrightarrow \mathbb S/I^{n+1} \mathbb S\longrightarrow D/I^{n+1}D\longrightarrow 0.
     \end{equation*}
     Since $I \mathbb S=I$, we have $I^{n}\mathbb S=\underbrace{(I \mathbb S)\cdots(I \mathbb S)}_{n\text{-times}}=\underbrace{I \cdots I }_{n\text{-times}}=I^n$ for all $n.$ Therefore, $\dfrac{I^{n+1}\mathbb S \cap R}{I^{n+1}}=0$ for all $n.$ Also, from \cite[Chapter 2, Proposition 2.1 $(iii)$]{sv},  $\mathfrak m \mathcal{H}^{1}_{\mathfrak m}(R)=0,$ thus, $I^{n+1}\mathcal{H}^{1}_{\mathfrak m}(R)=0$ for all $n,$ which implies $I^{n+1}D=0$ for all $n.$  Therefore, we get the following short exact sequence:
      \begin{equation} \label{modifiedgoto}
         0 \longrightarrow  R/I^{n+1}\longrightarrow \mathbb S/I^{n+1} \mathbb S\longrightarrow D\longrightarrow 0.
     \end{equation}
     From the exact sequence (\ref{modifiedgoto}), we have $\lambda(R/I^{n+1})+\lambda_{R}(D)=\lambda_{R}(\mathbb S/I^{n+1} \mathbb S).$ Then for sufficiently large $n,$ we have the following equation:
     \begin{equation} \label{poly1}
         e_{0}(I)\binom{n+2}{2}-e_{1}(I)(n+1)+e_{2}(I)+\lambda_{R}(D)=e_{0}^{R}(I \mathbb S)\binom{n+2}{2}-e_{1}^{R}(I \mathbb S)(n+1)
         +e_{2}^{R}(I \mathbb S).
     \end{equation}
     On comparing the coefficients in (\ref{poly1}), we get  $e_{0}(I)=e_{0}^{R}(I \mathbb S),$ $e_{1}(I)=e_{1}^{R}(I \mathbb S)$ and
         $e_{2}(I)+\lambda_{R}(D)=e_{2}^{R}(I \mathbb S).$ Furthermore, $\lambda_{R}(D)=\lambda\left(\mathcal{H}^{1}_{\mathfrak m}(R)\right)=-e_{1}(J)$ because $R$ is Buchsbaum \cite[
         Chapter 1, Propositions 2.6 and 2.7]{sv}. Thus, $e_{2}(I)=e_{2}^{R}(I \mathbb S)+e_{1}(J).$

         Now consider $\displaystyle \sum_{j=0}^{n}\left[\lambda(R/I^{j+1})+\lambda_{R}(D)\right]=\displaystyle \sum_{j=0}^{n}\lambda_{R}\left( \mathbb S/I^{j+1} \mathbb S\right),$ then for sufficiently large $n$, we get that
\begin{equation} \label{nine}
  \displaystyle \sum_{j=0}^{3}(-1)^{j} e_{j}(I) \binom{n+3-j}{3-j} +(n+1)\lambda_{R}(D)=\displaystyle \sum_{j=0}^{3}(-1)^{j} e_{j}^{R}(I\mathbb S) \binom{n+3-j}{3-j}.
\end{equation}
Substituting values of $e_{0}(I)$, $e_{1}(I)$ and $e_{2}(I)$ from $(\ref{eo})$ and $(\ref{e1})$, we get $e_{3}(I)=e_{3}^{R}(I \mathbb S).$
\end{proof}

In the following theorem,  we give a bound on $\rho(I)$ for two-dimensional Buchsbaum rings.

\begin{theorem} \label{rudra}
    Let $(R, \mathfrak m)$ be a two dimensional Buchsbaum ring with  $\depth R >0$  and $\mathbb S$ be the $S_{2}$-fication of $R$. Let $I$ be an $\mathfrak m$-primary ideal with minimal reduction $J$ and assume that $I=I\mathbb S$. If $r_{J}(I) < \rho(I),$ then $\rho(I) \leq r_{J}(I)-1+(e_{2}(I)-e_{1}(J)-1)(e_{2}(I)-e_{1}(J))-e_{3}(I)$.
\end{theorem}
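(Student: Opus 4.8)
The plan is to reduce, exactly as in the proof of Theorem~\ref{riyi}, to an upper bound on $\widetilde{e}_{3}(I)$, and then to import the Cohen--Macaulay bound of Lemma~\ref{mainak} through the $S_{2}$-fication $\mathbb S$. Since $\depth R>0$ and $I$ is $\mathfrak m$-primary we have $\grade I>0$, so $\widetilde{I^{n}}=I^{n}$ for $n\gg 0$; consequently the $I$-adic and the Ratliff--Rush filtrations have the same degree-two Hilbert polynomial, i.e.\ $e_{i}(I)=\widetilde{e}_{i}(I)$ for $0\le i\le 2$. Now I would rerun the proof of Theorem~\ref{riyi} with Lemma~\ref{papa} in the role of Proposition~\ref{maa}: the hypothesis $r_{J}(I)<\rho(I)$ forces $\widetilde{I^{r_{J}(I)}}\neq I^{r_{J}(I)}$, hence $\widetilde{I^{n}}\neq I^{n}$ for every $r_{J}(I)\le n<\rho(I)$, and summing lengths gives $\rho(I)\le r_{J}(I)-1+\widetilde{e}_{3}(I)-e_{3}(I)$, while moreover $\widetilde{e}_{3}(I)-e_{3}(I)=\sum_{n\ge 0}\lambda_{R}(\widetilde{I^{n+1}}/I^{n+1})$. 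Thus the theorem is reduced to showing $\widetilde{e}_{3}(I)\le (e_{2}(I)-e_{1}(J)-1)(e_{2}(I)-e_{1}(J))$.

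For the change of rings I would use $I=I\mathbb S$, which gives $I^{n}=(I\mathbb S)^{n}$ for all $n$ and, directly from the definition of the Ratliff--Rush closure together with $I^{m}\mathbb S=I^{m}$, the inclusion $\widetilde{I^{n}}\subseteq\widetilde{(I\mathbb S)^{n}}$. Since both contain $I^{n}=(I\mathbb S)^{n}$, the $R$-module $\widetilde{I^{n+1}}/I^{n+1}$ embeds into $\widetilde{(I\mathbb S)^{n+1}}/(I\mathbb S)^{n+1}$, so $\lambda_{R}(\widetilde{I^{n+1}}/I^{n+1})\le\lambda_{R}(\widetilde{(I\mathbb S)^{n+1}}/(I\mathbb S)^{n+1})$. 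The latter is a finite-length $R$-module that is also an $\mathbb S$-module, so formula~(\ref{vivek}) applies, and since the Ratliff--Rush closure commutes with localization at the maximal ideals of the semilocal ring $\mathbb S$ it equals $\sum_{i=1}^{l}\lambda_{\mathbb S_{i}}(\widetilde{(I\mathbb S_{i})^{n+1}}/(I\mathbb S_{i})^{n+1})f_{i}$. Each $\mathbb S_{i}$ is a two-dimensional Cohen--Macaulay local ring with infinite residue field and $I\mathbb S_{i}$ is $\mathfrak M_{i}\mathbb S_{i}$-primary, so the identity of the previous paragraph holds inside $\mathbb S_{i}$ and Lemma~\ref{mainak} applies there. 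Summing over $n$ and then over $i$ with weights $f_{i}$ gives $\widetilde{e}_{3}(I)-e_{3}(I)\le\sum_{i=1}^{l}f_{i}\big(\widetilde{e}_{3}^{\mathbb S_{i}}(I\mathbb S_{i})-e_{3}^{\mathbb S_{i}}(I\mathbb S_{i})\big)\le\sum_{i=1}^{l}f_{i}\big(e_{2}^{\mathbb S_{i}}(I\mathbb S_{i})^{2}-e_{2}^{\mathbb S_{i}}(I\mathbb S_{i})-e_{3}^{\mathbb S_{i}}(I\mathbb S_{i})\big)$.

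To finish I would linearize the quadratic sum. Each $\mathbb S_{i}$ is Cohen--Macaulay of dimension two, so $e_{2}^{\mathbb S_{i}}(I\mathbb S_{i})\ge 0$ (Narita), and each relative degree satisfies $f_{i}\ge 1$; hence the elementary estimate $\sum_{i}f_{i}x_{i}^{2}\le\big(\sum_{i}f_{i}x_{i}\big)^{2}$ --- obtained by discarding the nonnegative cross terms and using $f_{i}^{2}\ge f_{i}$ --- applies with $x_{i}=e_{2}^{\mathbb S_{i}}(I\mathbb S_{i})$. Combined with Lemma~\ref{bintere}(iii),(iv) this yields $\widetilde{e}_{3}(I)-e_{3}(I)\le e_{2}^{R}(I\mathbb S)^{2}-e_{2}^{R}(I\mathbb S)-e_{3}^{R}(I\mathbb S)$. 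Finally Lemma~\ref{coefflemma} substitutes $e_{2}^{R}(I\mathbb S)=e_{2}(I)-e_{1}(J)$ and $e_{3}^{R}(I\mathbb S)=e_{3}(I)$, so $\widetilde{e}_{3}(I)\le (e_{2}(I)-e_{1}(J))^{2}-(e_{2}(I)-e_{1}(J))=(e_{2}(I)-e_{1}(J)-1)(e_{2}(I)-e_{1}(J))$, and plugging this into the inequality from the first paragraph proves the theorem.

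The main obstacle is the change-of-rings bookkeeping in the middle step: making precise that $\widetilde{I^{n}}\subseteq\widetilde{(I\mathbb S)^{n}}$ and that the Ratliff--Rush closure of $(I\mathbb S)^{n}$ localizes correctly at each $\mathfrak M_{i}$, so that the length formula~(\ref{vivek}) may be applied term by term and Lemma~\ref{mainak} invoked on each $\mathbb S_{i}$. The one genuinely inequality-based move is the convexity estimate $\sum_{i}f_{i}x_{i}^{2}\le\big(\sum_{i}f_{i}x_{i}\big)^{2}$, and it is precisely this that produces the quadratic shape $(e_{2}(I)-e_{1}(J)-1)(e_{2}(I)-e_{1}(J))$ of the bound; everything else is a routine assembly of Lemmas~\ref{mainak}, \ref{bintere} and~\ref{coefflemma}.
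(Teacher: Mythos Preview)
Your argument is correct and uses the same ingredients as the paper's proof --- Lemma~\ref{papa} for the Buchsbaum analogue of Remark~\ref{shruti}, Lemma~\ref{mainak} applied in each Cohen--Macaulay localization $\mathbb S_{i}$, the convexity estimate $\sum_i f_i x_i^{2}\le(\sum_i f_i x_i)^{2}$ for $x_i\ge 0$ and $f_i\ge 1$, and Lemmas~\ref{bintere} and~\ref{coefflemma} for the change of rings --- but the two proofs are organized differently.

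The paper first passes to the $S_2$-fication at the level of the invariant $\rho$ itself: it shows $\rho(I)\le\rho(I\mathbb S)=\rho(I\mathbb S_{1})$ (using Remark~\ref{remark1}(ii)), checks that $r_{J\mathbb S_{1}}(I\mathbb S_{1})\le r_{J}(I)<\rho(I\mathbb S_{1})$, and then applies Proposition~\ref{riya} inside the single Cohen--Macaulay ring $\mathbb S_{1}$; only afterwards does it inflate by the remaining nonnegative summands and run the quadratic estimate. You instead stay in $R$ for the first step, obtain $\rho(I)\le r_{J}(I)-1+\widetilde{e}_{3}(I)-e_{3}(I)$ directly from Lemma~\ref{papa}, and push the length sum $\sum_n\lambda_R(\widetilde{I^{n+1}}/I^{n+1})$ through the $S_2$-fication term by term. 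Your route avoids comparing reduction numbers across the extension, at the price of needing that Ratliff--Rush closure is compatible with localizing $\mathbb S$ at its maximal ideals; this is fine once one assumes $R$ complete (as the paper does in the $S_2$-fication discussion), since then $\mathbb S\cong\prod_i\mathbb S_i$ and the compatibility is immediate. Either way the same quadratic inequality and the same coefficient identities finish the proof.
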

\begin{proof}
    We first note that for any $n \geq \rho(I \mathbb S),$ we have $\widetilde{I^{n}} \subseteq \widetilde{I^{n}}\mathbb S \cap R \subseteq \widetilde{I^{n}\mathbb S}\cap R= I^{n}\mathbb S \cap R=I^{n}.$ This implies $\widetilde{I^{n}}=I^{n}$ for all $n \geq \rho(I \mathbb S),$ and therefore, $\rho(I) \leq \rho(I \mathbb S).$ Without the loss of generality, let $\rho(I \mathbb S)=\rho(I \mathbb S_{1}).$ It is not hard to see that $r_{J\mathbb S_{1}}(I \mathbb S_{1}) \leq r_{J\mathbb S}(I\mathbb S)$ and $r_{J\mathbb S}(I\mathbb S) \le r_{J}(I)$.  Thus, we get $r_{J \mathbb S_{1}}(I \mathbb S_{1}) < \rho(I \mathbb S_{1}).$ Therefore, from Proposition \ref{riya}, we have 
    \begin{align} \nonumber
        \rho(I \mathbb S_{1}) & \leq  r_{J \mathbb S_{1}}(I \mathbb S_{1})-1+ (e_{2}^{\mathbb S_{1}}(I\mathbb S_{1})-1)e_{2}^{\mathbb S_{1}}(I\mathbb S_{1})-e_{3}^{\mathbb S_{1}}(I\mathbb S_{1})\\ \label{anji}
&\leq r_{J \mathbb S_{1}}(I \mathbb S_{1})-1+ \displaystyle \sum_{i=1}^{l}\left[(e_{2}^{\mathbb S_{i}}(I\mathbb S_{i})-1)e_{2}^{\mathbb S_{i}}(I\mathbb S_{i})-e_{3}^{\mathbb S_{i}}(I\mathbb S_{i})\right]\\   \nonumber
&\leq r_{J \mathbb S_{1}}(I \mathbb S_{1})-1+ \displaystyle \sum_{i=1}^{l}\left[\left((e_{2}^{\mathbb S_{i}}(I\mathbb S_{i})-1)e_{2}^{\mathbb S_{i}}(I\mathbb S_{i})-e_{3}^{\mathbb S_{i}}(I\mathbb S_{i})\right)f_{i}\right]\\ \nonumber
& = r_{J \mathbb S_{1}}(I \mathbb S_{1})-1+ \displaystyle \sum_{i=1}^{l}\left[\left(e_{2}^{\mathbb S_{i}}(I\mathbb S_{i})\right)^{2}f_{i}-e_{2}^{\mathbb S_{i}}(I\mathbb S_{i})f_{i}-e_{3}^{\mathbb S_{i}}(I\mathbb S_{i})f_{i}\right]\\ \nonumber
& \leq r_{J \mathbb S_{1}}(I \mathbb S_{1})-1+ \displaystyle \sum_{i=1}^{l}\left[(e_{2}^{\mathbb S_{i}}(I\mathbb S_{i})f_{i})^{2}-e_{2}^{\mathbb S_{i}}(I\mathbb S_{i})f_{i}-e_{3}^{\mathbb S_{i}}(I\mathbb S_{i})f_{i}\right]\\ \label{genda}
           &\leq r_{J \mathbb S_{1}}(I \mathbb S_{1})-1+ \displaystyle \left[\sum_{i=1}^{l}(e_{2}^{\mathbb S_{i}}(I\mathbb S_{i})f_{i})\right]^{2}-\sum_{i=1}^{l}\left[e_{2}^{\mathbb S_{i}}(I\mathbb S_{i})f_{i}+e_{3}^{\mathbb S_{i}}(I\mathbb S_{i})f_{i}\right]\\   \nonumber
        & \leq r_{J \mathbb S}(I \mathbb S)-1+ (e_{2}^{R}(I \mathbb S)-1)e_{2}^{R}(I \mathbb S)-e_{3}^{R}(I \mathbb S) \quad \text{ (from Lemma \ref{bintere} and Remark \ref{remark1} (i))}\\ \nonumber
        & \leq r_{J}(I)-1+(e_{2}(I)-e_{1}(J)-1)(e_{2}(I)-e_{1}(J))-e_{3}(I)  \quad  \text{ (from Lemma \ref{coefflemma}}). 
      \end{align}

        Note that the inequality in (\ref{anji})  holds by the proof of Theorem \ref{riyi} and Lemma \ref{mainak},  $e_{3}^{\mathbb S_{i}}(I\mathbb S_{i}) \leq \widetilde{e}_{3}^{\mathbb S_{i}}(I\mathbb S_{i}) \leq e_{2}^{\mathbb S_{i}}(I\mathbb S_{i})(e_{2}^{\mathbb S_{i}}(I\mathbb S_{i})-1)$ for all $1 \leq i \leq l.$ Further, since $\mathbb S_{i}$ is Cohen- Macaulay of dimension 2 for all $1 \leq i \leq l,$ thus the inequality in (\ref{genda}) holds since $e_{2}^{\mathbb S_{i}}(I\mathbb S_{i})\geq 0$ for all $1 \leq i \leq l$ from \cite[Proposition 3.1]{rv}.
\end{proof}

As an immediate consequence, we have the following corollary.
\begin{corollary}
     Let $(R, \mathfrak m)$ be a two dimensional Buchsbaum ring with  positive depth. For any minimal reduction $J$ of $\mathfrak m$, if $r_{J}(\mathfrak m) < \rho(\mathfrak m),$ then $$\rho(\mathfrak m) \leq r_{J}(\mathfrak m)-1+(e_{2}(\mathfrak m)-e_{1}(J)-1)(e_{2}(\mathfrak m)-e_{1}(J))-e_{3}(\mathfrak m).$$
\end{corollary}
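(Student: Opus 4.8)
The plan is to obtain this corollary as the special case $I=\mathfrak m$ of Theorem \ref{rudra}. The only thing that really has to be checked is that the standing hypothesis $I=I\mathbb S$ of that theorem is automatically satisfied when $I=\mathfrak m$, i.e.\ that $\mathfrak m\mathbb S=\mathfrak m$; once this is in place everything else transfers verbatim, including the appearance of $e_1(J)$ in the stated bound.

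To verify $\mathfrak m\mathbb S=\mathfrak m$, I would start from the short exact sequence $0\to R\to \mathbb S\to D\to 0$ used in the proof of Lemma \ref{coefflemma} (cf.\ \cite[Proposition 4.1]{gghv}), where $D\cong \mathcal H^{1}_{\mathfrak m}(R)$ has finite length. Since $R$ is Buchsbaum, \cite[Chapter 2, Proposition 2.1(iii)]{sv} gives $\mathfrak m\,\mathcal H^{1}_{\mathfrak m}(R)=0$, hence $\mathfrak m D=0$. Reading this off the surjection $\mathbb S\to D$ shows $\mathfrak m s\subseteq R$ for every $s\in\mathbb S$, so $\mathfrak m\mathbb S\subseteq R$, while trivially $\mathfrak m=\mathfrak m\cdot 1\subseteq \mathfrak m\mathbb S$. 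Now $\mathfrak m\mathbb S$ is an ideal of $\mathbb S$ contained in $R$, hence an ideal of $R$; and because $R\to\mathbb S$ is module-finite with $(R,\mathfrak m)$ local, every maximal ideal $\mathfrak M_i$ of the semilocal ring $\mathbb S$ contracts to $\mathfrak m$, so $\mathfrak m\mathbb S$ lies in each $\mathfrak M_i$ and therefore in the Jacobson radical of $\mathbb S$; in particular $1\notin\mathfrak m\mathbb S$, so $\mathfrak m\mathbb S\neq R$. A proper ideal of $R$ containing $\mathfrak m$ must equal $\mathfrak m$, whence $\mathfrak m\mathbb S=\mathfrak m$.

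With this identity in hand, Theorem \ref{rudra} applies with $I=\mathfrak m$ and the given minimal reduction $J$ of $\mathfrak m$ (for which $r_J(\mathfrak m)<\rho(\mathfrak m)$ by hypothesis), and it yields exactly $\rho(\mathfrak m)\le r_J(\mathfrak m)-1+(e_2(\mathfrak m)-e_1(J)-1)(e_2(\mathfrak m)-e_1(J))-e_3(\mathfrak m)$. There is no genuine obstacle here beyond the bookkeeping check $\mathfrak m\mathbb S=\mathfrak m$: no new bound on Hilbert coefficients and no fresh change-of-rings argument is needed, since all of that work has already been carried out in Lemmas \ref{bintere} and \ref{coefflemma} and in Theorem \ref{rudra}.
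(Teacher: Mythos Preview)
Your proposal is correct and follows essentially the same approach as the paper: both verify $\mathfrak m\mathbb S=\mathfrak m$ via the exact sequence $0\to R\to\mathbb S\to D\to 0$ from \cite[Proposition~4.1]{gghv} together with $\mathfrak m\,\mathcal H^1_{\mathfrak m}(R)=0$ from \cite[Chapter~2, Proposition~2.1(iii)]{sv}, and then invoke Theorem~\ref{rudra}. You simply spell out in more detail the step showing $\mathfrak m\mathbb S\subseteq R$ and $\mathfrak m\mathbb S\neq R$, whereas the paper leaves this implicit in the citations.
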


\begin{proof}
Using \cite[Proposition 4.1]{gghv}  and  from \cite[Chapter 2, Proposition 2.1 $(iii)$]{sv}, $\mathfrak{m} \mathcal{H}^{1}_{\mathfrak{m}}(R) = 0$, we have that  $\mathfrak m \mathbb S=\mathfrak m.$ Therefore, the conclusion follows directly from Theorem \ref{rudra}.
\end{proof}

The following example illustrates Theorem \ref{rudra}.

\begin{example} \label{example 4.6}
    \cite[Example 4.7]{gghv} Let $R=\mathbb Q[[X,Y,Z,W]]/(X,Y)\cap(Z,W),$ where $X,Y,Z,W$ are variables. Then $R$ is a two-dimensional Buchsbaum ring with positive depth. Let $x,y,z,w$ denote the images of $X,Y,Z,W$ in $R,$ and let $I=(x^7,x^{6}y,x^{2}y^{5},y^{7},z,w).$ Note that the $S_{2}$-fication of $R$ is $\mathbb{S}=\mathbb Q[[z,w]]\bigoplus \mathbb Q[[x,y]],$ so that $I=I\mathbb S.$ By CoCoA, the Hilbert series is
    \begin{equation*}
        h_{I}(t)=\frac{35+8t+6t^{2}+2t^{3}-t^{5}}{(1-t)^2}.
    \end{equation*}

Hence, $e(I)=50, e_{1}(I)=21$, $e_{2}(I)=2$ and $e_{3}(I)=-8.$ Using Macaulay 2, $J=(\frac{2}{3}x^{7}+\frac{9}{8}x^{6}y+10x^{2}y^{5}+\frac{8}{7}y^{7}+\frac{6}{5}z+w,\frac{3}{2}x^{7}+\frac{2}{3}x^{6}y+4x^{2}y^{5}+4y^{7}+\frac{8}{3}z+10w),$ is a minimal reduction of $I$ with $r_{J}(I)=3$ and $e_{1}(J)=-1.$ Note that $x^{17}y^{4} \in (I^4:I)$ but $x^{17}y^{4} \notin I^3,$ therefore, $\widetilde{I^{3}} \neq I^{3}.$ 
Thus, from Theorem \ref{rudra}, we have $\rho(I) \leq r_{J}(I)-1+(e_{2}(I)-e_{1}(J)-1)(e_{2}(I)-e_{1}(J))-e_{3}(I)=16.$
\end{example}

 We end this section with the following question.

 \begin{question}
     Is $\rho(I) \leq r_{J}(I)-1+(e_{2}(I)-e_{1}(J)-1)(e_{2}(I)-e_{1}(J))-e_{3}(I)$ for any $\mathfrak m$-primary ideal $I$ with minimal reduction $J$ in a two-dimensional Buchsbaum ring with positive depth under the condition that $r_{J}(I)<\rho(I)?$ 
 \end{question}

\section{Bounds on the Castelnuovo-Mumford Regularity}
In this section, we give applications of the theorems of the last sections by giving bounds on the Castelnuovo-Mumford regularity. In \cite{rtt}, Rossi, Trung and Trung established a relationship between the regularity of the associated graded ring of $I$ and the seemingly unrelated Ratliff-Rush closure of an ideal. They characterized $\reg \mathcal{R}(I)$ in terms of $r_{J}(I)$ and $\rho(I)$ for an $\mathfrak m$-primary ideal $I$ in a two-dimensional Buchsbaum ring \cite[Theorem 2.4]{rtt}.  Note that Ooishi \cite[Lemma 4.8]{ooishi2} showed that there is always an equality 
$\reg G(I) = \mathrm{reg}\, \mathcal{R}(I)$; 
see also \cite[Corollary 3.3]{trung}.
In the following proposition, drawing inspirations from their result, we characterize $\reg G(I)$ in terms of  $\rho(I),\ldots,\rho\left(I/(x_{1},\ldots,x_{d-2})\right),$ and $r_{J}(I)$, where $x_{1},\ldots,x_{d-2} \in I$ is a superficial sequence for $I.$

 \begin{theorem} \label{regularity}
          Let $(R, \mathfrak m)$ be a Cohen-Macaulay local ring of dimension $d \geq 2,$ and $I$ an $\mathfrak m$-primary ideal with minimal reduction $J=(x_{1}, \ldots, x_{d}),$ where $x_{1}, \ldots, x_{d}  $ is a superficial sequence for $I,$ then
          \begin{center}
               $\reg G(I)=\max\{\rho(I), \rho(I/(x_{1})),\ldots,\rho(I/(x_{1},x_{2},\ldots,x_{d-2})) ,r_{J}(I)\}.$
          \end{center}
          \end{theorem}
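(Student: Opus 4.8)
The plan is to induct on $d$, peeling off one superficial element at a time and controlling how the three invariants $\reg$, $\rho$ and $r_J$ interact with reduction modulo $x_d$. The base case $d=2$ is precisely \cite[Theorem 2.4]{rtt} (specialized to the Cohen-Macaulay situation, where $R$ is in particular a two-dimensional Buchsbaum ring with $\depth R>0$), which gives $\reg G(I)=\max\{r_J(I),\rho(I)\}$; note that in dimension two there are no intermediate quotients, so the claimed formula reads correctly. For the inductive step, suppose $d\geq 3$ and set $\bar R=R/(x_d)$, $\bar I=I/(x_d)$, $\bar J=J/(x_d)$. Since $x_d$ is a superficial element lying in the minimal reduction $J$, the images $x_1,\ldots,x_{d-1}$ form a superficial sequence for $\bar I$ and $\bar J$ is a minimal reduction of $\bar I$ with $r_{\bar J}(\bar I)\leq r_J(I)$. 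The induction hypothesis applied to $\bar R$ yields
\begin{equation*}
  \reg G(\bar I)=\max\{\rho(\bar I),\rho(\bar I/(x_1)),\ldots,\rho(\bar I/(x_1,\ldots,x_{d-3})),r_{\bar J}(\bar I)\}.
\end{equation*}
So the remaining task is to compare $\reg G(I)$ with $\reg G(\bar I)$ and with $\rho(I)$, the one invariant not captured by the quotient.

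The comparison of $\reg G(I)$ and $\reg G(\bar I)$ should go through the local cohomology of the $\mathcal{R}(I)$-modules $L^I(R)$ and $L^{\bar I}(\bar R)$ and the two fundamental exact sequences recalled in Definition \ref{def6} and the sequence \eqref{eq:u}, together with the identification $\reg G(I)=\reg^*(G(I))$ and the long exact sequences \eqref{csk} and \eqref{4}. Concretely, since $x_d$ is superficial, the module $\mathcal B(x_d,R)=\bigoplus_{n\geq 0}(I^{n+1}\colon x_d)/I^n$ has finite length with $\ex(\mathcal B(x_d,R))=\rho_{x_d}(I)-1=\rho(I)-1$ by Definition \ref{phn} and Puthenpurakal's theorem $\rho(I)=\rho_{x_d}(I)$. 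Feeding this into \eqref{4} and \eqref{csk}, one reads off that the top nonvanishing degrees of $\mathcal H^i(L^I(R))$ are governed, on one side, by those of $\mathcal H^i(L^{\bar I}(\bar R))$ (hence by $\reg G(\bar I)$ via the analogue of \eqref{csk} over $\bar R$) and, on the other side, by $\ex(\mathcal B(x_d,R))=\rho(I)-1$. Translating back through the first fundamental sequence \eqref{gg} to $G(I)$ itself, this should give an inequality in both directions, yielding
\begin{equation*}
  \reg G(I)=\max\{\rho(I),\reg G(\bar I)\},
\end{equation*}
and substituting the inductive formula for $\reg G(\bar I)$ and observing $\rho(\bar I/(x_1,\ldots,x_i))=\rho(I/(x_1,\ldots,x_i,x_d))=\rho(I/(x_1,\ldots,x_{i+1}))$ after relabelling the superficial sequence finishes the induction.

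The main obstacle, as I see it, is the bookkeeping in the local-cohomology argument that produces $\reg G(I)=\max\{\rho(I),\reg G(\bar I)\}$: one must track the degree shifts $(-1)$ carefully through the long exact sequences \eqref{csk} and \eqref{4}, handle the low-cohomology terms $\mathcal H^0$ and $\mathcal H^1$ (where $\mathcal B(x_d,R)$ enters) separately from the higher ones, and be sure that the passage $G(I)\leftrightarrow L^I(R)\leftrightarrow L^{\bar I}(\bar R)$ does not lose or gain a unit in the regularity. One subtlety worth isolating is that $x_d$ need only be superficial, not a nonzerodivisor on $G(I)$, so $\psi_{x_d}$ on $L^I(R)$ is not injective in low degrees; the kernel is exactly $\mathcal B(x_d,R)$, and this is where the bound $\ex(\mathcal B(x_d,R))=\rho(I)-1$ does its work. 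Once that single equation is established, the rest is a clean induction.
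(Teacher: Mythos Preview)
Your inductive strategy and the use of the fundamental exact sequences for $L^I(R)$ are exactly the paper's approach, and the target identity $\reg G(I)=\max\{\rho(I),\reg G(\bar I)\}$ is both correct and essentially what the paper's argument establishes. However, the element you mod out by is wrong, and this breaks the argument in two places.

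By Definition~\ref{phn}, a superficial sequence only guarantees that $x_i$ is superficial for $I/(x_1,\ldots,x_{i-1})$; it does \emph{not} say $x_d$ is superficial for $I$. Hence with $\bar R=R/(x_d)$ you cannot invoke Puthenpurakal's $\rho(I)=\rho_{x_d}(I)$ to conclude $\ex\mathcal B(x_d,R)=\rho(I)-1$, and more seriously you cannot claim that the images of $x_1,\ldots,x_{d-1}$ form a superficial sequence for $\bar I$, which is exactly what is needed to apply the induction hypothesis on $\bar R$. The ``relabelling'' step at the end, $\rho(I/(x_1,\ldots,x_i,x_d))=\rho(I/(x_1,\ldots,x_{i+1}))$, is simply false in general: these are quotients by different ideals, and nothing identifies their Ratliff--Rush indices.

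The fix is immediate: peel off $x_1$ instead of $x_d$. Then $x_1$ is superficial for $I$ by definition, $x_2,\ldots,x_d$ is automatically a superficial sequence for $I/(x_1)$, and the intermediate quotients match the statement directly via $\rho(\bar I/(x_2,\ldots,x_i))=\rho(I/(x_1,\ldots,x_i))$ with no relabelling needed. This is precisely what the paper does; for the upper bound it splits the work into $a_{i+1}(G(I))+i+1\le a_i(G(\bar I))+i$ for $i\ge 1$ (via filter-regularity of $x_1^*$) and a separate estimate $a_1(G(I))+1\le\max\{\rho(I),\rho(I/(x_1))\}$ coming from the $L$-module sequences \eqref{csk} and \eqref{4}. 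One further point you gloss over: after substituting the inductive formula, the reduction-number term is $r_{\bar J}(\bar I)$ rather than $r_J(I)$; since $r_{\bar J}(\bar I)\le r_J(I)\le\reg G(I)$, replacing the former by the latter does not change the maximum, but this step must be made explicit.
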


\begin{proof}
    We prove by induction on $d.$ For $d=2,$ the conclusion follows from \cite[Theorem 2.4]{rtt}, as $\reg G(I)= \max\{r_{J}(I),\rho(I)\}.$ We assume the proposition for $d-1$ and prove for $d.$ Set $\bar{R}=R/(x_{1})$ and $\bar{I}=I/(x_{1})$, then $\bar{R}$ is a $d-1$ dimensional Cohen-Macaulay local ring. By the induction hypothesis, we have 
    $\reg G(\bar{I})=\max\{r_{\bar{J}}(\bar{I}), \rho(\bar{I}), \rho\left(\bar{I}/(x_{2})\right),\ldots, \rho\left(\bar{I}/(x_{2},\ldots,x_{d-2})\right)\}=\max\{r_{\bar{J}}(\bar{I}), \rho(I/(x_{1})), \rho\left(I/(x_{1},x_{2})\right),\ldots, \rho\left(I/(x_{1},\ldots,x_{d-2})\right)\}$. This implies 
    \begin{equation*}
        a_{i}(G(\bar{I}))+i \leq \max\{r_{\bar{J}}(\bar{I}), \rho(I/(x_{1})),\ldots, \rho\left(I/(x_{1},\ldots,x_{d-2})\right)\} \text{ for all } i=1,\ldots,d-1.
    \end{equation*}

    We claim that $a_{i+1}(G(I))+(i+1) \leq a_{i}(G(\bar{I}))+i$ for all $i=1,\ldots, d-1.$ 
    Note that 
    \begin{equation*}
        \frac{\bar{I}^{n}}{\bar{I}^{n+1}}=\frac{I^{n}+(x_{1})}{I^{n+1}+(x_{1})} \cong \frac{I^{n}}{I^{n}\cap(I^{n+1}+(x_{1}))}=\frac{I^{n}}{((x_{1})\cap I^{n})+I^{n+1}}.
    \end{equation*}
So letting $W=\displaystyle \bigoplus_{n=0}^{\infty}\frac{x_{1}(I^{n}:x_{1})+I^{n+1}}{x_{1}I^{n-1}+I^{n+1}}$, there is an exact sequence 
\begin{equation} \label{les}
    0\longrightarrow W \longrightarrow \frac{G(I)}{x_{1}^{*}G(I)}=\displaystyle \bigoplus_{n=0}^{\infty}\frac{I^{n}}{x_{1}I^{n-1}+I^{n+1}}\longrightarrow G(\bar{I})\longrightarrow 0.
\end{equation}
Since $x_{1}$ is superficial for $I$, we get $W_{n}=0$ for sufficiently large $n.$ As $I$ is $\mathfrak m$-primary, we deduce that $\lambda(W) < \infty.$ Moreover, $x_{1}^{*}=x_{1}+I^{2}$ is $G(I)$-filter-regular. Taking the long exact sequence of cohomology associated to (\ref{les}), we deduce 
\begin{equation*}
    \mathcal{H}^{i}(G(\bar{I})) \cong \mathcal{H}^{i}(G(I)/x_{1}^{*}G(I)) \text{ for all }  i\geq 1.
\end{equation*}
Invoking the proof of \cite[Lemma 2.3]{trung2}, and the fact that $x_{1}^{*}$ is $G(I)$-filter-regular, we deduce
\begin{equation*}
      a_{i+1}(G(I)) \leq a_{i}(G(I)/x_{1}^{*}G(I)) -1=a_{i}(G(\bar{I}))-1 \text{  for all i  } \geq 1, 
\end{equation*}
as claimed.
    Therefore, for all $i=1,\ldots,d-1,$ we get
        \begin{align}   \label{10} \nonumber
            a_{i+1}(G(I))+(i+1) & \leq a_{i}(G(\bar{I}))+i\\ \nonumber
            & \leq \max\{r_{\bar{J}}(\bar{I}), \rho(I/(x_{1})), \rho\left(I/(x_{1},x_{2})\right),\ldots, \rho\left(I/(x_{1},x_{2},\ldots,x_{d-2})\right)\}\\
            & \leq \max\{r_{J}(I), \rho(I/(x_{1})), \rho\left(I/(x_{1},x_{2})\right),\ldots, \rho\left(I/(x_{1},x_{2},\ldots,x_{d-2})\right)\}.
        \end{align}

From \cite[Proposition 4.7(2)]{tjp}, we have $\mathcal{H}^{0}\left(L^{\bar{I}}(\bar{R})\right)=\displaystyle \bigoplus_{n\geq 0}\widetilde{\overline{I^{n+1}}}/\overline{I^{n+1
}}$. Thus, $\mathcal{H}^{0}\left(L^{\bar{I}}(\bar{R})\right)_{n}=0$ for all $n \geq \rho\left(I/(x_{1})\right)-1,$ and from \cite[Theorem 6.4(1)]{tjp}, we have $\mathcal{H}^{1}\left(L^{I}(R)\right)_{n}=0$ for sufficiently large $n.$ Therefore, from the long exact sequence (\ref{4}) of local cohomology modules:
\begin{equation*} 
    \begin{aligned}
        0 \longrightarrow \mathcal{B}(x,R)  &  \longrightarrow \mathcal{H}^{0}\left(L^{I}(R)\right)(-1) \longrightarrow \mathcal{H}^{0}\left(L^{I}(R)\right) \longrightarrow \mathcal{H}^{0}\left(L^{\bar{I}}(\bar{R})\right) \\
     & \longrightarrow \mathcal{H}^{1}\left(L^{I}(R)\right)(-1) \longrightarrow \mathcal{H}^{1}\left(L^{I}(R)\right) \longrightarrow \mathcal{H}^{1}\left(L^{\bar{I}}(\bar{R})\right) \ldots
    \end{aligned}
\end{equation*} the map  $f_{n}:\mathcal{H}^{1}\left(L^{I}(R)\right)_{n-1}\longrightarrow \mathcal{H}^{1}\left(L^{I}(R)\right)_{n}$ is injective for all $n \geq \rho\left(I/(x_{1})\right)-2.$ Thus, $\mathcal{H}^{1}\left(L^{I}(R)\right)_{n}=0$ for all $n \geq \rho\left(I/(x_{1})\right)-2.$ From \cite[Proposition 4.7(2)]{tjp}, we have $\mathcal{H}^{0}\left(L^{I}(R)\right)=\displaystyle \bigoplus_{n\geq 0}\widetilde{I^{n+1}}/I^{n+1
}$, thus $\mathcal{H}^{0}\left(L^{I}(R)\right)_{n}=0$ for all $n \geq \rho(I)-1.$ Therefore, from the long exact sequence (\ref{csk}),  we have $\mathcal{H}^{1}\left(G(I)\right)_{n}=0$ for all $n \geq \max\{ \rho(I), \rho\left(I/(x_{1})\right)-2\},$ This implies
    \begin{equation} \label{11}
        a_{1}(G(I))+1 \leq \max\{ \rho(I), \rho\left(I/(x_{1})\right)\}.
    \end{equation}

From inequalities (\ref{10}) and (\ref{11}), we get
\begin{equation*}
    a_{i}(G(I))+i \leq  \max\{r_{J}(I), \rho(I), \rho(I/(x_{1})),\ldots, \rho\left(I/(x_{1},\ldots,x_{d-2})\right)\} \text{ for all } i=1,\ldots,d.
\end{equation*}
Therefore,\begin{equation*}
       \reg G(I) \leq  \max\{r_{J}(I), \rho(I),\ldots, \rho\left(I/(x_{1}\ldots,x_{d-2})\right)\}.
   \end{equation*}
Set $R_{i}=R/(x_{1},\ldots,x_{i})$ for $i=0,\ldots,d-2,$ so that $R_{0}=R.$ From \cite[Lemma 1.2 and Theorem 1.3]{rtt}, we have $r_{J}(I) \leq \reg G(I)$, and from \cite[Proposition 2.1(ii)]{rtt}, $\rho(I/(x_{1},\ldots,x_{i})) \leq \reg G(I/(x_{1},\ldots,x_{i}))$  for all $i=0,\ldots,d-2.$ 
Using the exact sequence (\ref{les}) from above, the fact that  $\lambda(W) < \infty,$ $x_{1}^{*}$ is $G(I)$-filter-regular, we deduce that 
\begin{equation*}
    \reg G(\bar{I})=\reg G(I/(x_{1}))\leq \reg \frac{G(I)}{x_{1}^{*}G(I)} \leq \reg G(I).
\end{equation*}
By induction, we get $\reg G(I/(x_{1},\ldots,x_{d-2})) \leq \cdots \leq \reg G(I),$ hence $\reg G(I) \geq\rho(I/(x_{1},\ldots,x_{i})) $ for all $i=0,\ldots,d-2.$
Thus,
\begin{equation*}
    \reg G(I) \geq  \max\{r_{J}(I), \rho(I),\ldots, \rho\left(I/(x_{1},\ldots,x_{d-2})\right)\} 
\end{equation*}
Therefore, we get $\reg G(I)=\max\{\rho(I), \rho(I/(x_{1})),\ldots,\rho(I/(x_{1},x_{2},\ldots,x_{d-2})) ,r_{J}(I)\}.$
\end{proof}

The formula for $\reg G(I)$ in Theorem \ref{regularity}, provides an effective tool for the computation of $\reg G(I)$ especially under the assumption that the Ratliff-Rush filtration of $I$ behaves well mod a superficial sequence. As an immediate consequence, we have the following corollary.
 \begin{corollary} \label{cor5.2}
          Let $(R, \mathfrak m)$ be a Cohen-Macaulay local ring of dimension $d \geq 2,$ and $I$ an $\mathfrak m$-primary ideal with minimal reduction $J=(x_{1}, \ldots, x_{d}),$ where $x_{1}, \ldots, x_{d}  $ is a superficial sequence for $I.$ Suppose the Ratliff-Rush filtration with respect to $I$ behaves well mod a superficial sequence $x_{1},  \ldots, x_{d-2},$ then
          \begin{center}
     $\reg G(I)=\max\{ r_{J}(I),\rho(I)\}.$
    \end{center}

    Further, if $r_{J}(I) < \rho(I),$ then $\reg G(I) \leq r_{J}(I)-1+(-1)^{d+1}(e_{d+1}(I)-\widetilde{e}_{d+1}(I)).$
    \end{corollary}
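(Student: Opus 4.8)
The plan is to combine the regularity formula of Theorem \ref{regularity} with the bound of Theorem \ref{riyi}. By Theorem \ref{regularity},
\[
\reg G(I)=\max\bigl\{\rho(I),\,\rho(I/(x_{1})),\,\ldots,\,\rho(I/(x_{1},\ldots,x_{d-2})),\,r_{J}(I)\bigr\},
\]
and since both $\rho(I)$ and $r_{J}(I)$ already appear in this maximum, the inequality $\reg G(I)\ge\max\{r_{J}(I),\rho(I)\}$ is immediate; the content is the reverse inequality, for which it is enough to bound each intermediate term $\rho(I/(x_{1},\ldots,x_{i}))$, $1\le i\le d-2$, by $\rho(I)$. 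The first thing I would do is use the hypothesis that the Ratliff-Rush filtration behaves well mod $x_{1},\ldots,x_{d-2}$: by Definition \ref{rrr} it gives $\pi_{0,d-2}(\widetilde{I^{n}})=\widetilde{I^{n}R_{d-2}}$ for all $n\ge 1$, so for $n\ge\rho(I)$ one has $\widetilde{I^{n}}=I^{n}$ and hence $\widetilde{I^{n}R_{d-2}}=\pi_{0,d-2}(I^{n})=I^{n}R_{d-2}$, i.e. $\rho(I/(x_{1},\ldots,x_{d-2}))\le\rho(I)$.

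The next step is to run the same argument through each $R_{i}=R/(x_{1},\ldots,x_{i})$ with $i<d-2$. For this I would invoke Remark \ref{21}$(i)$, which turns the hypothesis into $\depth\widetilde{G}(I)\ge d-1$, and then the theory of \cite{tjp2}: replacing $x_{1},\ldots,x_{d}$, if necessary, by a sufficiently general superficial sequence generating $J$ (which leaves $\reg G(I)$, $\rho(I)$ and $r_{J}(I)$ unchanged), the depth bound makes the Ratliff-Rush filtration behave well mod every initial segment $x_{1},\ldots,x_{i}$. Granting this, the computation of the previous paragraph repeats verbatim and gives $\widetilde{I^{n}R_{i}}=I^{n}R_{i}$ for all $n\ge\rho(I)$, so $\rho(I/(x_{1},\ldots,x_{i}))\le\rho(I)$ for every $0\le i\le d-2$. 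Feeding this back into the displayed formula yields $\reg G(I)\le\max\{r_{J}(I),\rho(I)\}$, hence $\reg G(I)=\max\{r_{J}(I),\rho(I)\}$. I expect this descent of the ``behaves well'' property to its initial segments to be the one nonformal point; it is where the estimate $\depth\widetilde{G}(I)\ge d-1$ is genuinely used, and it is a non-issue in dimensions $2$ and $3$ (no, resp. exactly one, intermediate term), so the real work lies in higher dimension.

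For the last assertion I would argue as follows. If $r_{J}(I)<\rho(I)$ then $\max\{r_{J}(I),\rho(I)\}=\rho(I)$, so the first part gives $\reg G(I)=\rho(I)$. Also $r_{J}(I)<\rho(I)$ forces $\widetilde{I^{r_{J}(I)}}\ne I^{r_{J}(I)}$, for otherwise Proposition \ref{maa} applied with $m=r_{J}(I)$ would give $\rho(I)\le r_{J}(I)$, a contradiction. Theorem \ref{riyi} then applies and yields $\rho(I)\le r_{J}(I)-1+(-1)^{d+1}(e_{d+1}(I)-\widetilde{e}_{d+1}(I))$, and therefore $\reg G(I)=\rho(I)\le r_{J}(I)-1+(-1)^{d+1}(e_{d+1}(I)-\widetilde{e}_{d+1}(I))$, as required.
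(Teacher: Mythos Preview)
Your overall architecture is exactly the paper's: feed Theorem \ref{regularity} the inequalities $\rho(I/(x_{1},\ldots,x_{i}))\le\rho(I)$ for $1\le i\le d-2$, then invoke Theorem \ref{riyi}. Your treatment of the second assertion is in fact more careful than the paper's: you explicitly deduce $\widetilde{I^{r_{J}(I)}}\ne I^{r_{J}(I)}$ from $r_{J}(I)<\rho(I)$ via Proposition \ref{maa} before applying Theorem \ref{riyi}, whereas the paper just writes ``from Theorem \ref{riyi}'' and leaves that check implicit.

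The one substantive difference is precisely the point you flagged. The paper does not pass through $\depth\widetilde{G}(I)\ge d-1$ and a change of generators; it simply cites \cite[Lemma~4.9]{sp}, which under the hypothesis ``the Ratliff--Rush filtration behaves well mod $x_{1},\ldots,x_{d-2}$'' yields directly the chain $\rho(I)\ge\rho(I/(x_{1}))\ge\cdots\ge\rho(I/(x_{1},\ldots,x_{d-2}))$. Your detour is shakier than you perhaps realise: Remark \ref{21}$(i)$ gives only the implication ``behaves well $\Rightarrow$ depth bound'', and you are using the converse (for initial segments of a possibly different sequence) without a precise reference; moreover, Definition \ref{rrr} tells you only that $\pi_{0,d-2}^{n}$ is surjective, and surjectivity of a composite $\pi_{i,d-2}^{n}\circ\pi_{0,i}^{n}$ does not by itself force $\pi_{0,i}^{n}$ to be surjective. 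So the ``descent to initial segments'' really does require an argument, and the clean way is the external lemma the paper quotes rather than the genericity-plus-depth manoeuvre you sketch.
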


    \begin{proof}
        Since the Ratliff-Rush filtration behaves well mod a superficial sequence $x_{1},\ldots,x_{d-2} $ for $I$, thus, from \cite[Lemma 4.9]{sp}, we have    $\rho(I) \geq \rho\left(I/(x_{1})\right) \geq \ldots \geq \rho\left(I/(x_{1},  \ldots, x_{d-2})\right).$ Therefore, from Theorem \ref{regularity}, we get that  $\reg G(I)=\max\{ r_{J}(I),\rho(I)\}.$ 
 Next, let $r_{J}(I) < \rho(I),$ then from Theorem \ref{riyi}, we have $\reg G(I) \leq r_{J}(I)-1+(-1)^{d+1}(e_{d+1}(I)-\widetilde{e}_{d+1}(I)).$
    \end{proof}

Miranda-Neto and Queiroz proposed \cite[Question 4.4]{cmn} that in a Cohen-Macaulay ring of dimension $d\geq 2,$ for an $\mathfrak m$-primary ideal $I$ with minimal reduction $J \subseteq I$, if $\widetilde{I^{r_{J}(I)}}= I^{r_{J}(I)}$, then is $\reg G(I)= r_{J}(I)?$  The following corollary gives partial answer to this question  under the hypothesis that the Ratliff-Rush filtration with respect to $I$ behaves well mod a superficial sequence.

\begin{corollary} \label{nneed}
     Let $(R, \mathfrak m)$ be a Cohen-Macaulay local ring of dimension $d \geq 2,$ $I$ an $\mathfrak m$-primary ideal and $J=(x_{1}, \ldots, x_{d}),$ where $x_{1}, \ldots, x_{d}  $ is a superficial sequence for $I$. Suppose that the Ratliff-Rush filtration with respect to $I$ behaves well mod a superficial sequence $x_{1},\ldots,x_{d-2} $ and $\widetilde{I^{r_{J}(I)}}= I^{r_{J}(I)}$. Then $\reg G(I)= r_{J}(I).$ 
\end{corollary}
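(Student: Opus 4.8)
The plan is to combine the regularity formula from Theorem~\ref{regularity} with Proposition~\ref{maa} applied at each level of the superficial filtration. First I would recall that, by Theorem~\ref{regularity},
\[
\reg G(I)=\max\{\rho(I),\rho(I/(x_{1})),\ldots,\rho(I/(x_{1},\ldots,x_{d-2})),r_{J}(I)\}.
\]
Since the Ratliff-Rush filtration behaves well mod the superficial sequence $x_{1},\ldots,x_{d-2}$, \cite[Lemma 4.9]{sp} (already invoked in Corollary~\ref{cor5.2}) gives the chain of inequalities
\[
\rho(I)\geq\rho(I/(x_{1}))\geq\cdots\geq\rho(I/(x_{1},\ldots,x_{d-2})),
\]
so the right-hand side of the displayed formula collapses to $\max\{r_{J}(I),\rho(I)\}$; this is exactly the first assertion of Corollary~\ref{cor5.2}.

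Next I would use the hypothesis $\widetilde{I^{r_{J}(I)}}=I^{r_{J}(I)}$. By Proposition~\ref{maa}, since $r_{J}(I)\geq r_{J}(I)$ and $\widetilde{I^{r_{J}(I)}}=I^{r_{J}(I)}$, we conclude $\widetilde{I^{n}}=I^{n}$ for all $n\geq r_{J}(I)$, and in particular $\rho(I)\leq r_{J}(I)$. Feeding this into $\reg G(I)=\max\{r_{J}(I),\rho(I)\}$ yields $\reg G(I)=r_{J}(I)$, since always $r_{J}(I)\le\reg G(I)$ by \cite[Lemma 1.2 and Theorem 1.3]{rtt} (used inside the proof of Theorem~\ref{regularity}). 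That finishes the argument.

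I do not anticipate a serious obstacle here, as this is a clean corollary of two already-established results; the only point requiring a little care is making sure the hypothesis on the superficial sequence $x_{1},\ldots,x_{d-2}$ is precisely the one needed both for Theorem~\ref{regularity} to simplify (via \cite[Lemma 4.9]{sp}) and for Proposition~\ref{maa} to apply (which needs nothing beyond $R$ Cohen-Macaulay and $J$ a minimal reduction, so it is automatic). One should also note that the case $\rho(I)=1<r_{J}(I)$ or $\depth G(I)\ge 1$ is subsumed, since then $\widetilde{I^{n}}=I^{n}$ for all $n\ge 1$ and the equality $\reg G(I)=r_{J}(I)$ is immediate from the collapsed formula.

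\begin{proof}
By Corollary~\ref{cor5.2}, since the Ratliff-Rush filtration with respect to $I$ behaves well mod the superficial sequence $x_{1},\ldots,x_{d-2}$, we have $\reg G(I)=\max\{r_{J}(I),\rho(I)\}$. As $\widetilde{I^{r_{J}(I)}}=I^{r_{J}(I)}$, Proposition~\ref{maa} applied with $m=r_{J}(I)$ gives $\widetilde{I^{n}}=I^{n}$ for all $n\geq r_{J}(I)$, hence $\rho(I)\leq r_{J}(I)$. Therefore $\reg G(I)=r_{J}(I)$.
\end{proof}
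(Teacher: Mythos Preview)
Your proof is correct and matches the paper's own argument essentially verbatim: invoke Corollary~\ref{cor5.2} to reduce $\reg G(I)$ to $\max\{r_{J}(I),\rho(I)\}$, then apply Proposition~\ref{maa} with $m=r_{J}(I)$ to get $\rho(I)\le r_{J}(I)$.
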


\begin{proof}
 As the Ratliff-Rush filtration behaves well mod a superficial sequence $x_{1},\ldots,x_{d-2}$ for $I$, thus, from Corollary \ref{cor5.2}, we have $\reg G(I)=\max\{ r_{J}(I),\rho(I)\}.$
 Since $\widetilde{I^{r_{J}(I)}}= I^{r_{J}(I)}$, therefore, from Proposition \ref{maa}, we have $\rho(I) \leq r_{J}(I),$ which implies $\reg G(I)= r_{J}(I).$
\end{proof}

   The following example illustrates the above corollary.
    \begin{example} \cite[Example 3.1]{rv}
        Let $R=\mathbb Q[[x,y,z]]$ and $I=(x^{2}-y^{2},y^{2}-z^{2},xy,yz,zx).$ By CoCoA, the Hilbert series is
        \begin{equation*}
            h_{I}(t)=\frac{5+6t^{2}-4t^{3}+t^{4}}{(1-t)^{3}}.
        \end{equation*}
        Then $e_{2}(I)=e_{3}(I)=0.$ By \cite[Theorems 4.5 and 6.2]{tjp2}, the Ratliff-Rush filtration with respect to $I$ behaves well mod a superficial element. Note that $J=(x^2-y^2,x^2-z^2,xy+yz+zx)$ is a minimal reduction of $I$ with $r_{J}(I)=2$ such that $\widetilde{I^{2}}=I^{2}.$  Therefore, by Corollary \ref{nneed}, $\reg G(I)=r_{J}(I)=2.$
    \end{example}

We now give upper bounds on the regularity of the associated graded ring in terms of higher Hilbert coefficients.
 
\begin{proposition}\label{lama}  
  Let $(R,\mathfrak m)$ be a two dimensional Cohen-Macaulay local ring, and $I$ an $\mathfrak m$-primary ideal. Let $J \subseteq I$ be a minimal reduction of $I.$ Then 
  \footnotesize{
\begin{equation*}
\reg G(I) \leq
    \begin{cases}
        e_{1}(I)-e(I)+\lambda(R/I)+1 &, \text{if } \rho(I) \leq r_{J}(I)\\
          r_{J}(I)-1-e_{3}(I)+\left(\frac{r_{J}(I)-1}{2}\right)(e_{2}(I)-e_{1}(I)+e(I)-\lambda(R/I)+\lambda(\widetilde{I}/I))& , \text{if } r_{J}(I) < \rho(I). 
    \end{cases}
\end{equation*}}
    \end{proposition}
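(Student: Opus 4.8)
The plan is to reduce the statement to three already-established facts: the description of $\reg G(I)$ in dimension two, Rossi's bound on the reduction number, and the bound on $\rho(I)$ from Proposition \ref{mushroom}. Recall that for a two-dimensional Cohen-Macaulay local ring one has $\reg G(I)=\max\{r_{J}(I),\rho(I)\}$; this is \cite[Theorem 2.4]{rtt} (equivalently, the case $d=2$ of Theorem \ref{regularity}, noting that for $d=2$ the chain $\rho(I),\rho(I/(x_{1})),\ldots,\rho(I/(x_{1},\ldots,x_{d-2}))$ collapses to $\rho(I)$, or Corollary \ref{cor5.2} with the empty superficial sequence). Since the residue field is infinite we may take $J$ to be generated by a superficial sequence, so all of these apply to the given $J$. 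With this identity in place, the proof splits into the two cases of the statement.

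In the case $\rho(I)\le r_{J}(I)$ we have $\reg G(I)=r_{J}(I)$, so it suffices to bound the reduction number. Rossi's bound for Cohen-Macaulay local rings of dimension at most two, \cite[Corollary 1.5]{rossi2}, gives $r_{J}(I)\le e_{1}(I)-e_{0}(I)+\lambda(R/I)+1$, and since $e_{0}(I)=e(I)$ this is exactly the asserted bound $\reg G(I)\le e_{1}(I)-e(I)+\lambda(R/I)+1$.

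In the case $r_{J}(I)<\rho(I)$ we have $\reg G(I)=\rho(I)$, and the right-hand side of the asserted inequality is literally the bound on $\rho(I)$ furnished by Proposition \ref{mushroom}. That proposition applies here without further hypotheses: it is stated for two-dimensional Cohen-Macaulay local rings under the sole assumption $r_{J}(I)<\rho(I)$, and the ``behaves well mod a superficial sequence $x_{1},\ldots,x_{d-2}$'' condition it inherits from Corollary \ref{komal} is vacuous when $d=2$ (the sequence is empty, and behaving well mod the empty sequence is automatic). Hence $\reg G(I)=\rho(I)\le r_{J}(I)-1-e_{3}(I)+\left(\frac{r_{J}(I)-1}{2}\right)\!\left(e_{2}(I)-e_{1}(I)+e(I)-\lambda(R/I)+\lambda(\widetilde{I}/I)\right)$, which finishes this case.

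The argument is thus essentially bookkeeping, and there is no serious obstacle; the mathematical content lies entirely in the cited results. The one point worth stating explicitly, rather than leaving implicit, is that Proposition \ref{mushroom} (and Corollary \ref{komal} behind it) is unconditional in dimension two because the hypothesis on the Ratliff-Rush filtration degenerates; once this is acknowledged, the two displayed bounds follow immediately from $\reg G(I)=\max\{r_{J}(I),\rho(I)\}$ together with \cite[Corollary 1.5]{rossi2} and Proposition \ref{mushroom}, respectively.
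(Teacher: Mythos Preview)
Your proposal is correct and follows essentially the same approach as the paper: invoke $\reg G(I)=\max\{r_{J}(I),\rho(I)\}$ from \cite[Theorem 2.4]{rtt}, then in the first case apply Rossi's bound \cite[Corollary 1.5]{rossi2} and in the second case apply Proposition \ref{mushroom}. Your added remark that the ``behaves well mod a superficial sequence'' hypothesis is vacuous for $d=2$ is accurate and simply makes explicit what the paper leaves implicit in stating Proposition \ref{mushroom} without that hypothesis.
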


    \begin{proof}
    From \cite[Theorem 2.4]{rtt}, $\reg G(I) = \max \{r_{J}(I), \rho(I)\}.$ If $\rho(I) \leq r_{J}(I),$ then by \cite[Corollary 1.5]{rossi2}, we get $\reg G(I) \leq e_{1}(I)-e_{0}(I)+\lambda(R/I)+1.$ Further, if $r_{J}(I) < \rho(I),$ then by Proposition \ref{mushroom}, we have $\reg G(I) \leq  r_{J}(I)-1-e_{3}(I)+\left(\frac{r_{J}(I)-1}{2}\right)(e_{2}(I)-e_{1}(I)+e(I)-\lambda(R/I)+\lambda(\widetilde{I}/I)).$
\end{proof}

\begin{proposition}\label{sp}  
  Let $(R,\mathfrak m)$ be a two dimensional Cohen-Macaulay local ring, and $I$ an $\mathfrak m$-primary ideal. For  any minimal reduction $J \subseteq I$, if  $r_{J}(I) < \rho(I)$ then
  \begin{equation*}
      \reg G(I) \leq  r_{J}(I)-1+(e_{2}(I)-1)e_{2}(I)-e_{3}(I).
  \end{equation*}
 \end{proposition}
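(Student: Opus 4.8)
The plan is to deduce the statement directly from two facts already available: the Rossi--Trung--Trung description of the regularity and the bound on the Ratliff--Rush index proved earlier in Proposition~\ref{riya}. First I would observe that a two-dimensional Cohen--Macaulay local ring $(R,\mathfrak m)$ is in particular a two-dimensional Buchsbaum ring with $\depth R = 2 > 0$, so \cite[Theorem 2.4]{rtt} applies and yields $\reg G(I) = \reg \mathcal{R}(I) = \max\{r_{J}(I), \rho(I)\}$ for any minimal reduction $J$ of $I$. (This is also the point where the Cohen--Macaulay hypothesis is used, since \cite[Theorem 2.4]{rtt} is phrased for Buchsbaum rings of positive depth.)

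Next, the hypothesis $r_{J}(I) < \rho(I)$ forces $\max\{r_{J}(I), \rho(I)\} = \rho(I)$, hence $\reg G(I) = \rho(I)$. Now I would invoke Proposition~\ref{riya}: under exactly the same hypothesis $r_{J}(I) < \rho(I)$, it gives $\rho(I) \leq r_{J}(I) - 1 + (e_{2}(I)-1)e_{2}(I) - e_{3}(I)$. Chaining the two relations gives $\reg G(I) = \rho(I) \leq r_{J}(I) - 1 + (e_{2}(I)-1)e_{2}(I) - e_{3}(I)$, which is the assertion.

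There is no genuine obstacle in this final step: all the substance has already been absorbed into the earlier results, namely Lemma~\ref{mainak} (the inequality $\widetilde{e}_{3}(I) \leq (e_{2}(I)-1)e_{2}(I)$, obtained from the difference formula of \cite{blancafort} together with $a_{2}(\widetilde{G}(I)) = \widetilde{r}(I) - 2$ and $\widetilde{r}(I) - 1 \leq e_{2}(I)$), Theorem~\ref{riyi} (which controls $\rho(I)$ in terms of $r_{J}(I)$ and $e_{3}(I) - \widetilde{e}_{3}(I)$), and the Rossi--Trung--Trung formula. So the proof of Proposition~\ref{sp} is essentially a one-line combination of Proposition~\ref{riya} with \cite[Theorem 2.4]{rtt}, and the only thing worth spelling out is why the latter is applicable.
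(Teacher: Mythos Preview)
Your proposal is correct and follows essentially the same route as the paper's own proof: apply \cite[Theorem 2.4]{rtt} to get $\reg G(I) = \rho(I)$ under the hypothesis $r_{J}(I) < \rho(I)$, and then invoke Proposition~\ref{riya} to bound $\rho(I)$. The paper presents this as a two-line argument without the additional commentary, but the logical content is identical.
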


 \begin{proof}
     Since $r_{J}(I) < \rho(I),$ therefore, from \cite[Theorem 2.4]{rtt}, $\reg G(I) = \rho(I).$ Again from Proposition \ref{riya}, we have $\reg G(I) \leq  r_{J}(I)-1+(e_{2}(I)-1)e_{2}(I)-e_{3}(I).$ 
 \end{proof}

 From \cite[Corollary 1.5]{rossi2} we have, $r_{J}(I) \leq e_{1}(I)-e_{0}(I)+\lambda(R/I)+1.$ Thus, if $r_{J}(I) < \rho(I),$ then $\reg G(I) \leq  e_{1}(I)-e_{0}(I)+\lambda(R/I)+(e_{2}(I)-1)e_{2}(I)-e_{3}(I). $ As an immediate corollary, we show that in dimension two, for certain values of $e_{2}(I)$, we get linear bounds on $\reg G(I)$ in terms of higher Hilbert coefficients. 

\begin{corollary}
    Let $(R,\mathfrak m)$ be a two dimensional Cohen-Macaulay local ring, and $I$ an $\mathfrak m$-primary ideal. For  any minimal reduction $J \subseteq I$, if  $r_{J}(I) < \rho(I)$  the following statements hold. 
    \begin{enumerate}[(i)]
        \item If $e_{2}(I)=0$ or $1$, then $\reg G(I) \leq e_{1}(I)-e_{0}(I)+\lambda(R/I)-e_{3}(I).$
      
        \item If $e_{2}(I)=2$, then $\reg G(I) \leq e_{1}(I)-e_{0}(I)+\lambda(R/I)-e_{3}(I)+2.$
        \end{enumerate}  \end{corollary}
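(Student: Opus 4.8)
The plan is to derive both statements directly from the inequality displayed just before the corollary, namely that whenever $r_{J}(I)<\rho(I)$ one has
\[
\reg G(I)\leq e_{1}(I)-e_{0}(I)+\lambda(R/I)+(e_{2}(I)-1)e_{2}(I)-e_{3}(I),
\]
which itself follows by combining Proposition \ref{sp} with Rossi's bound $r_{J}(I)\leq e_{1}(I)-e_{0}(I)+\lambda(R/I)+1$ from \cite[Corollary 1.5]{rossi2}. So the only work left is to evaluate the quadratic term $(e_{2}(I)-1)e_{2}(I)$ at the prescribed small values of $e_{2}(I)$.

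For part $(i)$, I would observe that if $e_{2}(I)=0$ then $(e_{2}(I)-1)e_{2}(I)=(-1)\cdot 0=0$, and if $e_{2}(I)=1$ then $(e_{2}(I)-1)e_{2}(I)=0\cdot 1=0$; in either case the displayed inequality reduces to $\reg G(I)\leq e_{1}(I)-e_{0}(I)+\lambda(R/I)-e_{3}(I)$, which is exactly the claim. For part $(ii)$, if $e_{2}(I)=2$ then $(e_{2}(I)-1)e_{2}(I)=1\cdot 2=2$, so the displayed inequality gives $\reg G(I)\leq e_{1}(I)-e_{0}(I)+\lambda(R/I)-e_{3}(I)+2$, again as asserted.

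There is essentially no obstacle here — the statement is a routine specialization, so the write-up will be just a few lines invoking Proposition \ref{sp} (or directly the inequality preceding the corollary) and substituting. The only point worth a sentence of care is to note that the hypothesis $r_{J}(I)<\rho(I)$ is precisely what licenses the use of Proposition \ref{sp}, so that both cases are covered uniformly, and that no positivity assumption on $e_{2}(I)$, $e_{3}(I)$, etc., is needed beyond the stated numerical values of $e_{2}(I)$.
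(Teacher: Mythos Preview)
Your proposal is correct and follows essentially the same approach as the paper. The paper's proof substitutes the value of $e_{2}(I)$ into Proposition~\ref{sp} first and then applies Rossi's bound, whereas you apply the already-combined inequality (displayed just before the corollary) and then substitute; the two orderings are equivalent and the content is identical.
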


        \begin{proof}
            \begin{enumerate} [(i)]
                \item  If $e_{2}(I)=0$ or $1$, then from Proposition \ref{sp}, we have  $\reg G(I) \leq  r_{J}(I)-1-e_{3}(I).$ Further,   from \cite[Corollary 1.5]{rossi2} we have, $r_{J}(I) \leq e_{1}(I)-e_{0}(I)+\lambda(R/I)+1.$
Therefore, $\reg G(I) \leq e_{1}(I)-e_{0}(I)+\lambda(R/I)-e_{3}(I).$

                \item  If $e_{2}(I)=2$, then  $\reg G(I) \leq  r_{J}(I)-e_{3}(I)+1$ from Proposition \ref{sp}. Further,   from \cite[Corollary 1.5]{rossi2} we have, $r_{J}(I) \leq e_{1}(I)-e_{0}(I)+\lambda(R/I)+1.$
Therefore, $\reg G(I) \leq e_{1}(I)-e_{0}(I)+\lambda(R/I)-e_{3}(I)+2.$ \qedhere
            \end{enumerate}
        \end{proof}

We now consider the following examples to compare the bounds given in Proposition \ref{lama} and Proposition \ref{sp} with  the one given by Rossi, Trung and Valla in  \cite[Corollary 3.4]{rtv}.

\begin{example}
    \begin{enumerate}[(i)]
        \item Let $R=\mathbb{Q}[[x,y]]$ and $I=(x^9,x^7y^2,xy^8,y^9)$. By CoCoA, the Hilbert series is
        \begin{equation*}
            h_{I}(t)=\frac{61+6t+7t^{2}+7t^{3}+2t^{4}-t^{6}-t^{7}}{(1-t)^2}.
        \end{equation*}
        Then $ e(I)=81,$ $ e_{1}(I)=36,$ $e_{2}(I)=4$ and $e_{3}(I)=-40.$ By \cite[Theorem 2.1]{elias},  we have
        $\widetilde{I}=(x^9,x^7y^2,x^5y^4,x^3y^6,xy^8,y^9)$ and $\lambda(\widetilde{I}/I)=12.$ By Macaulay 2, $J=(\frac{7}{6}x^9+\frac{3}{5}x^7y^2+\frac{7}{2}xy^8+\frac{7}{4}y^9, \frac{4}{7}x^9+4x^7y^2+\frac{4}{9}xy^8+\frac{3}{2}y^9)$ is a minimal reduction of $I$ with $r_{J}(I)=4$. Note that $x^{13}y^{23} \in (I^{5}:I) \subseteq \widetilde{I^{4}}$ but $x^{13}y^{23} \notin I^{4}$, therefore,  $\widetilde{I^{4}} \neq I^{4}$. Thus  from 
        Proposition \ref{lama},  we get $\reg G(I) \leq r_{J}(I)-1-e_{3}(I)+\left(\frac{r_{J}(I)-1}{2}\right)(e_{2}(I)-e_{1}(I)+e(I)-\lambda(R/I)+\lambda(\widetilde{I}/I))=43$ and from Proposition \ref{sp}, we get $\reg G(I) \leq r_{J}(I)-1+(e_{2}(I)-1)e_{2}(I)-e_{3}(I)=55$. The bound given by Rossi, Trung and Valla is  $(e(I)-1)e(I)=6480$.
     
        \item  Let $R=\mathbb{Q}[[x,y]]$ and $I=(x^7,x^6y,x^3y^4,x^2y^5,y^7)$. By CoCoA, the Hilbert series is
        \begin{equation*}
              h_{I}(t)=\frac{32+11t+7t^{2}-t^{4}}{(1-t)^2}.
        \end{equation*}
           Then $ e(I)=49,$ $ e_{1}(I)=21,$ $e_{2}(I)=1$ and $e_{3}(I)=-4.$ By \cite[Theorem 2.1]{elias}, we have $\widetilde{I}=(x^7,x^6y,x^5y^2,x^4y^3,x^3y^4,x^2y^5,y^7)$ and $\lambda(\widetilde{I}/I)=3.$ Here $J=(x^7, x^6y+y^7)$ is a minimal reduction of $I$ with $r_{J}(I)=2.$ Note that $x^{11}y^{3} \in (I^3:I) \subseteq \widetilde{I^{2}}$ but $x^{11}y^{3} \notin I^2$, therefore,  $\widetilde{I^{2}} \neq I^{2}$. Thus  from Proposition \ref{lama}, we get $\reg G(I) \leq r_{J}(I)-1-e_{3}(I)+\left(\frac{r_{J}(I)-1}{2}\right)(e_{2}(I)-e_{1}(I)+e(I)-\lambda(R/I)+\lambda(\widetilde{I}/I))=5$ and from Proposition \ref{sp},  we get $\reg G(I) \leq r_{J}(I)-1-e_{3}(I)=5$. The bound given by Rossi, Trung and Valla is  $(e(I)-1)e(I)=2352$.
    \end{enumerate}
\end{example}

In  \cite[Theorem 4.4]{chl}, Linh proved that in a   Noetherian local  ring $R$ of dimension $d\geq 2,$
\begin{equation*}
    \reg G(I) \leq 2^{(d-1)!} D(I,R)^{3(d-1)!-1}-1,
\end{equation*}
where  $D(I,R)$ is the arbitrary \textit{extended degree} of $R$ with respect to $I.$ By Vasconcelos \cite{vas1}, it can be verified that, if $(R,\mathfrak m)$ is a $d$-dimensional Buchsbaum ring then 
\begin{equation*}
    D(I,R)=e(I)+I(R),
\end{equation*}
where,  the number $I(R)$ is called the \textit{Buchsbaum invariant} of $R$ and is defined in \cite[Chapter 2, Proposition 2.7]{sv}, as
\begin{equation*}
    I(R)=\displaystyle \sum_{i=0}^{d-1}\binom{d-1}{i}\lambda\left(\mathcal{H}_{\mathfrak m}^{i}(R)\right) \text{ or } I(R)=\displaystyle \sum_{i=1}^{d}(-1)^{i}e_{i}(J).
\end{equation*}
Therefore, for a two-dimensional Buchsbaum ring from Linh's result, we get
\begin{equation*}
    \reg G(I) \leq 2(e(I)+I(R))^{2}-1.
\end{equation*}

We conclude this section with the following  proposition, in which we give a bound on regularity in two-dimensional Buchsbaum rings. We also consider Example \ref{example 4.6} to compare  the bound in the following proposition with the one given by Linh in \cite[Theorem 4.4]{chl}.

\begin{proposition} \label{prop 5.8}

 Let $(R, \mathfrak m)$ be a two dimensional Buchsbaum ring with  $\depth R >0$  and $\mathbb S$ be the $S_{2}$-fication of $R$. Let $I$ be an $\mathfrak m$-primary ideal with minimal reduction $J$ and $I=I\mathbb S$. If $r_{J}(I) < \rho(I),$ then $\reg G(I) \leq  r_{J}(I)-1+(e_{2}(I)-e_{1}(J)-1)(e_{2}(I)-e_{1}(J))-e_{3}(I)$.
\end{proposition}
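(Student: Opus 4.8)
The plan is to obtain the stated bound as a direct combination of two results already available: the regularity characterization of Rossi--Trung--Trung in the two-dimensional Buchsbaum setting, and the bound on $\rho(I)$ proved in Theorem~\ref{rudra}. First I would invoke \cite[Theorem 2.4]{rtt}: since $R$ is a two-dimensional Buchsbaum ring with $\depth R>0$, $I$ is $\mathfrak m$-primary, and $J$ is a minimal reduction of $I$, one has $\reg G(I)=\reg\mathcal{R}(I)=\max\{r_{J}(I),\rho(I)\}$ (the equality $\reg G(I)=\reg\mathcal{R}(I)$ also follows from \cite[Lemma 4.8]{ooishi2} and \cite[Corollary 3.3]{trung}). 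Because the hypothesis of the proposition is precisely $r_{J}(I)<\rho(I)$, this maximum equals $\rho(I)$, so $\reg G(I)=\rho(I)$.

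It then remains only to bound $\rho(I)$. The standing assumptions of the proposition---$R$ a two-dimensional Buchsbaum ring with positive depth, $\mathbb S$ its $S_{2}$-fication, $I$ an $\mathfrak m$-primary ideal with minimal reduction $J$, $I=I\mathbb S$, and $r_{J}(I)<\rho(I)$---are exactly those of Theorem~\ref{rudra}, which gives
\begin{equation*}
\rho(I)\leq r_{J}(I)-1+(e_{2}(I)-e_{1}(J)-1)(e_{2}(I)-e_{1}(J))-e_{3}(I).
\end{equation*}
Substituting the equality $\reg G(I)=\rho(I)$ obtained in the first step into this inequality yields the asserted bound, and the proof is complete.

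I do not expect a genuine obstacle here: this proposition is an immediate corollary of Theorem~\ref{rudra} together with \cite[Theorem 2.4]{rtt}, following the same template as Propositions~\ref{lama} and~\ref{sp} in the Cohen--Macaulay case. The only point that calls for (routine) care is to check that every hypothesis needed to apply those two results---the two-dimensional Buchsbaum condition, $\depth R>0$, the existence of the minimal reduction $J$, and the assumption $I=I\mathbb S$ required by Theorem~\ref{rudra}---is indeed present among the hypotheses of the proposition, which it is.
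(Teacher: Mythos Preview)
Your proposal is correct and follows essentially the same approach as the paper: invoke \cite[Theorem 2.4]{rtt} to get $\reg G(I)=\max\{r_{J}(I),\rho(I)\}=\rho(I)$ under the hypothesis $r_{J}(I)<\rho(I)$, and then apply Theorem~\ref{rudra} to bound $\rho(I)$.
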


\begin{proof}
    From \cite[Theorem 2.4]{rtt}, we have $\reg G(I)=\max\{r_{J}(I), \rho(I)\}$. Since $r_{J}(I) < \rho(I),$ therefore, $\reg G(I)=\rho(I).$ Thus, from Theorem \ref{rudra}, we get $\reg G(I) \leq  r_{J}(I)-1+(e_{2}(I)-e_{1}(J)-1)(e_{2}(I)-e_{1}(J))-e_{3}(I)$.
\end{proof}

\begin{example}
    From Example \ref{example 4.6}, we have $r_{Q}(I) < \rho(I),$ therefore, from Proposition \ref{prop 5.8}, we get  $\reg G(I)\leq r_{J}(I)-1+(e_{2}(I)-e_{1}(J)-1)(e_{2}(I)-e_{1}(J))-e_{3}(I)=16. $ Now from \cite[Example 2.8]{go}, we get  $x-z, y-w$ is a system of parameters for $R.$ Let $J=(x-z,y-w),$ then $e_{1}(J)=-1$ and $e_{2}(J)=0$. Therefore, $I(R)=-e_{1}(J)+e_{2}(J)=1.$  Note that the bound $ 2(e(I)+I(R))^{2}-1=2(50+1)^{2}-1=5201$ given by Linh in \cite[Theorem 4.4]{chl}  is larger than our bound.
\end{example}

\section*{Acknowledgement}
We would like to express our sincere thanks to the referee for a meticulous
reading of the manuscript and making excellent suggestions, incorporating which made the article  read well.
We are also thankful to Prof. T. J. Puthenpurakal for his enlightening discussions and  valuable suggestions. Additionally, the second author acknowledges the support received from the Government of India through the Prime Minister's Research Fellowship during the course of this work.

 {}

\end{document}